\newcommand{\Ad}{\text {\rm Ad}}
\def\i{^{-1}}
\def\ge{\geqslant}
\def\le{\leqslant}
\def\<{\langle}
\def\>{\rangle}
\def\d{\text{d}}
\def\a{\alpha}
\def\b{\beta}
\def\G{\Gamma}
\def\d{\delta}
\def\e{\epsilon}
\def\s{\sigma}
\def\t{\tau}
\def\th{\theta}
\def\k{\kappa}
\def\l{\lambda}
\def\Om{\Omega}
\def\ZZ{\mathbb Z}
\def\NN{\mathbb N}
\def\RR{\mathbb R}
\def\ca{\mathcal A}
\def\ch{\mathcal H}
\def\ci{\mathcal I}
\def\co{\mathcal O}
\def\tH{\tilde H}
\def\tu{\tilde u}
\def\tx{\tilde x}
\def\ty{\tilde y}
\def\tW{\tilde W}
\def\tw{\tilde w}
\def\fH{\mathfrak H}
\def\fR{\mathfrak R}
\def\fs{\mathfrak S}
\theoremstyle{plain}
\newtheorem{thm}{Theorem}[section]
\newtheorem*{thm*}{Theorem}
 \newtheorem{prop}[thm]{Proposition}
 \newtheorem{lem}[thm]{Lemma}
 \newtheorem{cor}[thm]{Corollary}
\theoremstyle{definition}
\newtheorem{example}[thm]{Example}
\newtheorem*{thm1}{Theorem A}
\newtheorem*{thm2}{Theorem B}
\newtheorem*{thm4}{Theorem C}
\theoremstyle{remark}
\newtheorem*{rmk}{Remark}
\newtheorem*{claim*}{Claim}
\begin{document}
\author{Xuhua He}
\address{Department of Mathematics and Institute for advanced Study, The Hong Kong University of Science and Technology, Clear Water Bay, Kowloon, Hong Kong}
\email{xuhuahe@gmail.com}
\thanks{X.H. was partially supported by HKRGC grant 602011.}
\author{Sian Nie}
\address{Max Planck Institute for mathematics, Vivatsgasse 7, 53111, Bonn, Germany}
\email{niesian@amss.ac.cn}
\title[]{$P$-alcoves, parabolic subalgebras and cocenters of affine Hecke algebras}

\maketitle

\begin{abstract}
This is a continuation of the sequence of papers \cite{HN2}, \cite{H99} in the study of the cocenters and class polynomials of affine Hecke algebras $\ch$ and their relation to affine Deligne-Lusztig varieties. Let $w$ be a $P$-alcove element, as introduced in \cite{GHKR} and \cite{GHN}. In this paper, we study the image of $T_w$ in the cocenter of $\ch$. In the process, we obtain a Bernstein presentation of the cocenter of $\ch$. We also obtain a comparison theorem among the class polynomials of $\ch$ and of its parabolic subalgebras, which is analogous to the Hodge-Newton decomposition theorem for affine Deligne-Lusztig varieties. As a consequence, we present a new proof of \cite{GHKR} and \cite{GHN} on the emptiness pattern of affine Deligne-Lusztig varieties.
\end{abstract}



\section*{Introduction}

\subsection{} The purpose of this paper is twofold. We use some ideas arising from affine Deligne-Lusztig varieties to study affine Hecke algebras, and we apply the results on affine Hecke algebras to affine Deligne-Lusztig varieties. 

For simplicity, we only discuss the equal-parameter case in the introduction. The case of unequal parameters and the twisted cocenters will also be presented in this paper.

Let $\fR=(X, R, Y, R^\vee, F_0)$ be a based root datum and let $\tW$ be the associated extended affine Weyl group. An affine Hecke algebra $\ch$ is a deformation of the group algebra of $\tW$. It is a free $\ZZ[v, v \i]$-algebra with basis $\{T_w\}$, where $w \in \tW$. The relations among the $T_w$ are given in $\S$\ref{IMpresent}. This is the Iwahori-Matsumoto presentation of $\ch$.

The cocenter $\bar \ch=\ch/[\ch, \ch]$ of $\ch$ is a useful tool in the study of the representation theory and structure of $p$-adic groups. We will discuss some applications of the cocenter as they serve as the motivation for this paper.

Let $R(\ch)$ be the Grothendieck group of representations of $\ch$. Then the trace map $Tr: \bar \ch \to R(\ch)^*$ relates the cocenter $\bar \ch$ to the representations of $\ch$. This map was studied in \cite{BDK}, \cite{Ka}.

In \cite{HN2}, we provide a standard basis of the cocenter $\bar \ch$, which is constructed as follows. For each conjugacy class $\co$ of $\tW$, we choose a minimal length representative $w_\co$. Then the image of $T_{w_\co}$ in $\bar \ch$ is independent of the choice of $w_\co$ and the set $\{T_{w_\co}\}$, where $\co$ ranges over all the conjugacy classes of $\tW$, is a basis of $\bar \ch$. This is the Iwahori-Matsumoto presentation of $\bar \ch$.

Moreover, for any $w \in \tW$, $$T_w \equiv \sum_{\co} f_{w, \co} T_{w_\co} \mod [\ch, \ch]$$ for some $f_{w, \co} \in \NN[v-v \i]$. The coefficients $f_{w, \co}$ are called the {\it class polynomials}.

In \cite{H99}, the first-named author proved the ``dimension=degree'' theorem which relates the degrees of the class polynomials of $\ch$ to the dimensions of the affine Deligne-Lusztig varieties of the corresponding $p$-adic group $G$.

\subsection{} Let $J \subset S_0$ and let $\ch_J$ be the corresponding parabolic subalgebra of $\ch$. For a given $w \in \tW$, we would like to express $T_w$ as an element in $\ch_J+[\ch, \ch]$ for some $J$. 

This is useful for the representation theory because a large number of the representations of $\ch$ are built on the parabolically induced representations $\text{Ind}^\ch_{\ch_J}(-)$ for some $J$. It is also useful for the study of affine Deligne-Lusztig varieties as one would like to compare the affine Deligne-Lusztig varieties for $G$ and for the Levi subgroups of $G$.

We prove that

\begin{thm1}
Let $P$ be a (semistandard) parabolic subgroup of $G$ and let $w$ be a $P$-alcove element of type $J$. Then $T_w \in \ch_J+[\ch, \ch]$.
\end{thm1}

The notion of $P$-alcove elements was introduced by G\"ortz, Haines, Kottwitz, and Reuman in \cite{GHKR} and generalized in \cite{GHN}. Roughly speaking, $w$ is a $P$-alcove element if the finite part of $w$ lies in the finite Weyl group of $P$ and it sends the fundamental alcove to a certain region of the apartment. See \cite[Section 3]{GHKR} for a visualization.

\subsection{} Let $\co$ be a conjugacy class of $\tW$ and $w_\co$ be a minimal length element of $\co$. We may regard $w_\co$ as a $P$-alcove element for some $P$. In this case, we have a sharper result:

\begin{thm2}
Let $\co$ be a conjugacy class of $\tW$ and let $J \subset S_0$ be such that $\co \cap \tW_J$ contains an elliptic element of $\tW_J$. Then $$T_{w_\co} \equiv T^J_y  \mod [\ch, \ch]$$ for some $y \in \co \cap \tW_{J}$ of minimal length (with respect to the length function on $\tW_J$) in its $\tW_J$-conjugacy class. Here $T^J_y$ is the corresponding Iwahori-Matsumoto element in $\ch_{J}$.
\end{thm2}

The description of the element $T^J_y$ in $\ch$ uses Bernstein presentation. Thus Theorem B gives a Bernstein presentation of the cocenter $\bar \ch$. 

Notice that in the Bernstein presentation of the basis of $\bar \ch$, there are exactly $N$ elements that are not represented by elements in a proper parabolic subalgebra of $\ch$, where $N$ is the number of elliptic conjugacy classes of $\tW$. On the other hand, Opdam and Solleveld showed in \cite[Proposition 3.9]{OS} and \cite[Theorem 7.1]{OS2} that the dimension of the space of ``elliptic trace functions'' on $\ch$ also equals $N$. 
It would be interesting to relate these results via the trace map.




\subsection{} We may also compare the class polynomials of $\ch$ and of $\ch_J$ as follows:

\begin{thm4}
Let $P=z \i P_J z$ with $z \in {}^J W_0$ be a semistandard parabolic subgroup of $G$ and let $\tw$ be a $P$-alcove element. Suppose that \begin{align*}T_{\tw} &\equiv \sum_{\co} f_{\tw, \co}T_{w_\co} \mod [\ch, \ch], \\ T_{z \tw z\i}^J &\equiv \sum_{\co'} f_{z \tw z\i, \co'}^J T_{w_\co'}^J \mod [\ch_J, \ch_J].\end{align*} Then $f_{\tw, \co}=\sum_{\co' \subset \co} f_{z \tw z\i, \co'}^J.$
\end{thm4}

The Hodge-Newton decomposition theorem, which is proved in \cite[Theorem 1.1.4]{GHKR}, says that if $P=M N$ is a semistandard parabolic subgroup of $G$ and $\tw$ is a $P$-alcove element, then the corresponding affine Deligne-Lusztig varieties for the group $G$ and for the group $M$ are locally isomorphic.

Recall that there is a close relation between the class polynomials and the affine Deligne-Lusztig varieties. Thus Theorem C above can be regarded as an algebraic analog of the Hodge-Newton decomposition theorem in \cite{GHKR}.

Combining Theorem C with the ``degree=dimension'' Theorem, we can derive an algebraic proof of \cite[Theorem 1.1.2]{GHKR} and \cite[Corollary 3.6.1]{GHN} on the emptiness pattern of affine Deligne-Lusztig varieties.

\section{Affine Hecke algebras}

\subsection{} Let $\fR=(X, R, Y, R^\vee, F_0)$ be a based root datum, where $R \subset X$ is the set of roots, $R^\vee \subset Y$ is the set of coroots and $F_0 \subset R$ is the set of simple roots. By definition, there exist a bijection $\a \mapsto \a^\vee$ from $R$ to $R^\vee$ and a perfect pairing $\< , \>: X \times Y \to \ZZ$ such that $\<\a, \a^\vee\>=2$ and the corresponding reflections $s_\a: X \to X$ stabilizes $R$ and $s_{\a}^\vee: Y \to Y$ stabilizes $R^\vee$. We denote by $R^+ \subset R$ the set of positive roots determined by $F_0$. Let $X^+=\{\l \in X; \<\l, \a^\vee\> \ge 0, \, \forall \a \in R^+\}$.

The reflections $s_\a$ generate the Weyl group $W_0=W(R)$ of $R$ and $S_0=\{s_\a; \a \in F_0\}$ is the set of simple reflections.

An automorphism of $\fR$ is an automorphism $\d$ of $X$ such that $\d(F_0)=F_0$. Let $\G$ be a subgroup of automorphisms of $\fR$.

\subsection{}\label{length} Let $V=X \otimes_\ZZ \RR$. For $\a \in R$ and $k \in \ZZ$, set $$H_{\a, k}=\{v \in V; \<v, \a^\vee\>=k\}.$$ Let $\fH=\{H_{\a, k}; \a \in R, k \in \ZZ\}$.  Connected components of $V-\cup_{H \in \fH}H$ are called {\it alcoves}. Let $$C_0=\{v \in V; 0 < \<v, \a^\vee\> <1, \, \forall \a \in R^+\}$$ be the fundamental alcove.

Let $W=\ZZ R \rtimes W_0$ be the affine Weyl group and $S \supset S_0$ be the set of simple reflections in $W$. Then $(W, S)$ is a Coxeter group. Set $\tW=(X \rtimes W_0)\rtimes \G=X \rtimes (W_0 \rtimes \G)$. Then $W$ is a subgroup of $\tW$. Both $W$ and $\tW$ can be regarded as groups of affine transformations of $V$, which send alcoves to alcoves. For $\l \in X$, we denote by $t^\l \in W$ the corresponding translation. For any hyperplane $H=H_{\a,k} \in \fH$ with $\a \in R$ and $k \in \ZZ$, we denote by $s_H=t^{k \a} s_\a \in W$ the reflection of $V$ along $H$.

For any $\tw \in \tW$, we denote by $\ell(\tw)$ the number of hyperplanes in $\fH$ separating $C_0$ from $\tw(C_0)$. By \cite{IM65}, the length function is given by the following formula $$\ell(t^\chi w \t)=\sum_{\a, w \i(\a) \in R^+} |\<\chi, \a^\vee\>|+\sum_{\a\in R^+, w \i(\a)\in R^-} |\<\chi, \a^\vee\>-1|.$$ Here $\chi \in X$,$w \in W_0$ and $\t \in \G$.

If $\tw \in W$, then $\ell(\tw)$ is just the word length in the Coxeter system $(W, S)$. Let $\Om=\{\tw \in \tW; \ell(\tw)=0\}$. Then $\tW=W \rtimes \Om$.

\subsection{}\label{IMpresent} Let $q_s^{\frac{1}{2}}, s \in S$ be indeterminates. We assume that $q_s^{\frac{1}{2}}=q_t^{\frac{1}{2}}$ if $s, t$ are conjugate in $\tW$. Let $\ca=\ZZ[q_s^{\frac{1}{2}}, q_s^{-\frac{1}{2}}]_{s \in S}$ be the ring of Laurant polynomials in $q_s^{\frac{1}{2}}, s \in S$ with integer coefficients.

The (generic) Hecke algebra $\ch$ associated to the extend affine Weyl group $\tW$ is an associative $\ca$-algebra with basis $\{T_{\tw}; \tw \in \tW\}$ subject to the following relations \begin{gather*} T_{\tx} T_{\ty}=T_{\tx \ty}, \quad \text{ if } \ell(\tx)+\ell(\ty)=\ell(\tx \ty); \\ (T_s-q_s^{\frac{1}{2}})(T_s+q_s^{-\frac{1}{2}})=0, \quad \text{ for } s \in S. \end{gather*}

If $q_s^{\frac{1}{2}}=q_t^{\frac{1}{2}}$ for all $s, t \in S$, then we call $\ch$ the (generic) Hecke algebra with equal parameter.

This is the Iwahori-Matsumoto presentation of $\ch$. It reflects the structure of (quasi) Coxeter group $\tW$.

\subsection{}\label{th} In this section, we recall the Bernstein presentation of $\ch$. It is used to construct a basis of the center of $\ch$ and is useful in the study of representations of $\ch$.

For any $\l \in X$, we may write $\l$ as $\l=\chi-\chi'$ for $\chi, \chi' \in X^+$. Now set  $\th_\l=T_\chi T_{\chi'} \i$. It is easy to see that $\th_\l$ is independent of the choice of $\chi, \chi'$. The following results can be found in \cite{Lu}.

(1) $\th_\l \th_{\l'}=\th_{\l+\l'}$ for $\l, \l' \in X$.

(2) The set $\{\th_\l T_w; \l \in X, w \in W_0\}$ and $\{T_w \th_\l; \l \in X, w \in W_0\}$ are $\ca$-basis of $\ch$.

(3)  For $\l \in X^+$, set $z_\l=\sum_{\l' \in W \cdot \l} \th_{\l'}$. Then $z_{\l}, \l \in X^+$ is an $\ca$-basis of the center of $\ch$.

(4) $\th_{\chi}T_{s_\a}-T_{s_\a} \th_{s_\a(\chi)}=(q_{s_\a}^{\frac{1}{2}}-q_{s_\a}^{-\frac{1}{2}})\frac{\th_{\chi}-\th_{s_\a(\chi)}}{1-\th_{-\a}}$ for $\a \in F_0$ such that $\a^\vee \notin 2 Y$ and $\chi \in X$.

The following special cases will be used a lot in this paper.

(5) Let $\a \in F_0$ and $\chi \in X$. If $\<\chi, \a^\vee\>=0$, then $\th_\chi T_{s_\a}=T_{s_\a} \th_\chi$.

(6) Let $\a \in F_0$ and $\chi \in X$. If $\<\chi, \a^\vee\>=1$, then $\th_{s_\a(\chi)}=T_{s_\a} \i \th_\chi T_{s_\a} \i$.

\subsection{}\label{ptj} For any $J \subset S_0$, let $R_J$ be the set of roots spanned by $\a$ for $\a \in J$ and $R^\vee_J$ be the set of coroots spanned by $\a^\vee$ for $\a \in J$. Let $\fR_J=(X, R_J, Y, R^\vee_J, J)$ be the based root datum corresponding to $J$. Let $W_J \subset W_0$ be the subgroup generated by $s_\a$ for $\a \in J$ and set $\tW_J=(X \rtimes W_J) \rtimes \G_J$. Here $\G_J=\{\d \in \G; \d(R_J)=R_J\}$.
As in \S\ref{length}, we set $\fH_J=\{H_{\a, k} \in \fH; \a \in R_J, k \in \ZZ\}$ and $C_J=\{v \in V; 0 < \<v, \a^\vee\> < 1, \a \in R_J^+\}$. For any $\tw \in \tW_J$, we denote by $\ell_J(\tw)$ the number of hyperplanes in $\fH_J$ separating $C_J$ from $\tw C_J$.

We denote by $\tW^J$ (resp. ${}^J \tW$) the set of minimal coset representatives in $\tW/W_J$ (resp. $W_J \backslash \tW$). For $J, K \subset S_0$, we simply write $\tW^J \cap {}^K \tW$ as ${}^K \tW^J$.


Let $\ch_J \subset \ch$ be the subalgebra generated by $\th_\l$ for $\l \in X$ and $T_w$ for $w \in W_J \rtimes \G_J$. We call $\ch_J$ a parabolic subalgebra of $\ch$.

It is known that $\ch_J$ is the Hecke algebra associated to the extend affine Weyl group $\tW_J$ and the parameter function $p^{\frac{1}{2}}_t$, where $t$ ranges over simple reflections in $\tW_J$. The parameter function $p^{\frac{1}{2}}_t$ is determined by $q^{\frac{1}{2}}_s$ (see \cite[1.2]{OS}). We denote by $\{T^J_{\tw}\}_{\tw \in \tW_J}$ the Iwahori-Matsumoto basis of $\ch_J$.

\section{The Iwahori-Matsumoto presentation of $\bar \ch$}

\subsection{}\label{to} We follow \cite{HN2}.

For $w, w' \in \tW$ and $s \in S$, we write $w \xrightarrow{s} w'$ if $w'=s w s$ and $\ell(w') \le \ell(w)$.  We write $w \to w'$ if there is a sequence $w=w_0, w_1, \cdots, w_n=w'$ of elements in $\tW$ such that for any $k$, $w_{k-1} \xrightarrow{s} w_k$ for some $s \in S$.

We write $w \approx w'$ if $w \to w'$ and $w' \to w$. It is easy to see that $w \approx w'$ if $w \to w'$ and $\ell(w)=\ell(w')$.

We call $\tw, \tw' \in \tW$ {\it elementarily strongly conjugate} if $\ell(\tw)=\ell(\tw')$ and there exists $x \in W$ such that $\tw'=x \tw x \i$ and $\ell(x \tw)=\ell(x)+\ell(\tw)$ or $\ell(\tw x \i)=\ell(x)+\ell(\tw)$. We call $\tw, \tw'$ {\it strongly conjugate} if there is a sequence $\tw=\tw_0, \tw_1, \cdots, \tw_n=\tw'$ such that for each $i$, $\tw_{i-1}$ is elementarily strongly conjugate to $\tw_i$. We write $\tw \sim \tw'$ if $\tw$ and $\tw'$ are strongly conjugate. We write $\tw \tilde \sim \tw'$ if $\tw \sim \d \tw'\d^{-1}$ for some $\d\in\Om$.

Now we recall one of the main results in \cite{HN2}.

\begin{thm}\label{min}
Let $\co$ be a conjugacy class of $\tW$ and $\co_{\min}$ be the set of minimal length elements in $\co$. Then

(1) For any $\tw' \in \co$, there exists $\tw'' \in \co_{\min}$ such that $\tw' \rightarrow \tw''$.

(2) Let $\tw', \tw'' \in \co_{\min}$, then $\tw' \tilde \sim \tw''$.
\end{thm}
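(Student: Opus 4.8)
The plan is to prove (1) and (2) simultaneously by induction, the primary parameter being $\ell(\tw')$ with a secondary induction on $|S_0|$ used in the parabolic reduction below, and to organise the argument according to whether $\co$ is \emph{elliptic} in $\tW$ — i.e.\ meets no proper parabolic subgroup $\tW_J$, $J \subsetneq S_0$ — or not. Two facts will be used as known input: the analogue of the statement for the finite Coxeter system $(W_0,S_0)$, due to Geck and Pfeiffer, and the explicit formula for $\ell$ recalled above, which is the tool for tracking how the length of a word changes under conjugation by a simple reflection.

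For the non-elliptic case I would use the \emph{partial conjugation} method. Write $\tw' = t^{\l} w \t$ with $w \in W_0$, $\t \in \G$; conjugation by $x \in W_0$ sends this to $t^{x\l}\,(x w \t x\i)$, and whenever this does not increase $\ell$ it realises a step $\tw' \to \cdot$. Since $\co$ is non-elliptic, using such moves one should be able to reach some $\tw_1$ with $\tw' \to \tw_1$ whose finite part lies in a proper standard parabolic $W_J \rtimes \G_J$, and in fact with $\tw_1 \in \tW_J$. One then checks that for elements of $\tW_J$ the length functions $\ell$ and $\ell_J$ are compatible (up to the contribution of $\Om$), and that the relations $\to$ and $\tilde\sim$ computed inside $\tW_J$ imply the corresponding ones in $\tW$. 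Statement (1) for $\tw'$ then follows from (1) applied in $\tW_J$, which has strictly smaller rank; and for (2) one pushes two given minimal length elements of $\co$ into $\tW_J$, applies (2) there to obtain $\tilde\sim$ inside $\tW_J$, and transports it back to $\tW$.

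It remains to treat an elliptic class. Here the finite part of $\tw'$ is elliptic in $W_0 \rtimes \G$, which forces the translation part into a narrow range: after a further partial conjugation one may assume $\l$ lies in the closure of a chamber adapted to $w$, so that $\tw'$ ``points into a cone'', and then the length formula should show directly that either $\tw'$ already has minimal length in $\co$, or some simple reflection $s$ satisfies $\ell(s\tw's)<\ell(\tw')$, or at worst a chain of length-preserving conjugations $\tw' \approx \cdot$ exposes such an $s$; iterating gives (1). For (2) in the elliptic case one argues that every minimal length element of $\co$ is \emph{fundamental} — conjugation by any $x \in W$ whose product with $\tw'$ has additive length never changes $\ell$ — and then compares two such elements directly, producing a strongly conjugating chain that may need to be corrected by an element of $\Om$; this correction is exactly why the conclusion of (2) is phrased with $\tilde\sim$ rather than $\sim$.

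The main obstacle is the elliptic case, and within it the content of (1) that the relation $\to$, which is only permitted to preserve or decrease length, is already strong enough to reach a minimal length element — i.e.\ that one never needs to pass through an element of strictly larger length. Making this precise requires careful bookkeeping of the set of hyperplanes of $\fH$ separating $C_0$ from $\tw'(C_0)$ and of how conjugation by a simple reflection changes it, and it is also the place where the exact $\Om$-twist required in (2) must be pinned down.
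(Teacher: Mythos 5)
This theorem is not proved in the paper at all: it is quoted verbatim from \cite{HN2}, and your outline should be measured against the proof there. As it stands, your proposal has two genuine gaps. The more serious one is in the non-elliptic branch, where you reduce to a proper Levi $\tW_J$ and assert that ``the length functions $\ell$ and $\ell_J$ are compatible (up to the contribution of $\Om$), and that the relations $\to$ and $\tilde\sim$ computed inside $\tW_J$ imply the corresponding ones in $\tW$.'' This is exactly the hard point, and it is false as a formality: the simple reflections of the affine Weyl group of $R_J$ are the reflections in the walls of $C_J$, which are in general neither simple nor of length one in $(W,S)$, so a single $\xrightarrow{s}$-step for $\ell_J$ does not translate into a $\to$-step for $\ell$; moreover $\ell-\ell_J$ is not constant on a $\tW_J$-conjugacy class --- by Proposition \ref{fact'} the discrepancy is $\<\bar\nu_{\tw},2\rho^\vee\>-\<\bar\nu^J_{z\tw z\i},2\rho_J^\vee\>$, and controlling it requires the regular-point hypothesis on $V_{\tw}$ and the element $z\in{}^JW_0$. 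Establishing precisely this compatibility occupies Sections 3--5 of the present paper (Proposition \ref{fact'}, Corollary \ref{cor4.4}, Proposition \ref{mm}, Theorem \ref{thm4.1}), and all of those results use Theorem \ref{min} as an input, so an induction on rank along these lines runs in a circle.

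The second gap is that in the elliptic case --- which you correctly identify as carrying the main content of (1) --- your argument is a restatement of the claim rather than a proof: ``either $\tw'$ already has minimal length, or some $s$ satisfies $\ell(s\tw's)<\ell(\tw')$, or a chain of length-preserving conjugations exposes such an $s$'' is exactly assertion (1). The proof in \cite{HN2} avoids the elliptic/non-elliptic dichotomy altogether and supplies the missing mechanism: for each $\tw$ one considers the Newton point $\nu_{\tw}$ and the affine subspace $V_{\tw}=\{v;\ \tw(v)=v+\nu_{\tw}\}$, shows that any element can be carried by $\to$ to one for which $\bar C_0$ contains a regular point of $V_{\tw}$, proves the length formula $\ell(\tw)=\<\bar\nu_{\tw},2\rho^\vee\>+\sharp\fH_{V_{\tw}}(C_0,\tw C_0)$, and then reduces the minimization to the finite reflection group $W_{V_{\tw}}$ generated by reflections in hyperplanes containing $V_{\tw}$. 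Because those hyperplanes all pass through a point of $\bar C_0$, conjugation steps and lengths in $W_{V_{\tw}}$ genuinely are compatible with those in $W$ (unlike for $\tW_J$), and the twisted Geck--Pfeiffer theorem for finite Coxeter groups \cite{HN1} then yields both (1) and (2), with the $\Om$-twist in (2) accounted for at this last stage. If you want to salvage your outline, replace the Levi $\tW_J$ by the pointwise stabilizer $W_{V_{\tw}}$ and prove the two preparatory statements (reachability of a regular point of $V_{\tw}$ in $\bar C_0$, and the length formula); without them the argument does not close.
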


\subsection{}\label{min-tw} Let $h, h' \in \ch$, we call $[h, h']=h h'-h'h$ the {\it commutator} of $h$ and $h'$. Let $[\ch, \ch]$ be the $\ca$-submodule of $\ch$ generated by all commutators. We call the quotient $\ch / [\ch, \ch]$ the {\it cocenter} of $\ch$ and denote it by $\bar \ch$.

It follows easily from definition that $T_{\tw}\equiv T_{\tw'} \mod [\ch, \ch]$ if $\tw \tilde \sim \tw'$. Hence by Theorem \ref{min} (2), for any conjugacy class $\co$ of $\tW$ and $\tw, \tw' \in \co_{\min}$, $T_{\tw}\equiv T_{\tw'} \mod [\ch, \ch]$. We denote by $T_\co$ the image of $T_{\tw}$ in $\bar \ch$ for any $\tw \in \co_{\min}$.

\begin{thm}\label{IMbasis}
(1) The elements $\{T_\co\}$, where $\co$ ranges over all the conjugacy classes of $\tW$, span $\bar \ch$ as an $\ca$-module.

(2) If $q_s^{\frac{1}{2}}=q_t^{\frac{1}{2}}$ for all $s, t \in S$, then $\{T_\co\}$ is a basis of $\bar \ch$.
\end{thm}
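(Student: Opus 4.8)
The plan is to prove Theorem~\ref{IMbasis} in two parts, corresponding to the spanning statement (1) and the linear independence statement (2), exploiting the machinery recalled in \S\ref{to}--\S\ref{min-tw}.

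\smallskip

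For part (1), I would argue by induction on the length $\ell(\tw)$ to show that every $T_{\tw}$ lies in the $\ca$-span of $\{T_\co\}$ modulo $[\ch,\ch]$. Given $\tw \in \tW$ lying in a conjugacy class $\co$, if $\tw \in \co_{\min}$ then $T_{\tw} \equiv T_\co$ and we are done. Otherwise, by Theorem~\ref{min}(1) there is a chain $\tw = w_0, w_1, \dots, w_n$ with each step $w_{k-1} \xrightarrow{s} w_k$ and $\ell(w_n) < \ell(\tw)$; in fact it suffices to find a single step $\tw \xrightarrow{s} \tw'$ with $\ell(\tw') < \ell(\tw)$. Here one uses the standard Hecke-algebra identity: if $\ell(s\tw s) < \ell(\tw)$, then writing out $T_s T_{\tw'} T_s$ where $\tw' = s\tw s$ and using the quadratic relation $(T_s - q_s^{1/2})(T_s + q_s^{-1/2}) = 0$, one gets a congruence of the shape $T_{\tw} \equiv T_{\tw'} + (q_s^{1/2} - q_s^{-1/2}) T_{\tw''} \bmod [\ch,\ch]$ for some $\tw''$ with $\ell(\tw'') < \ell(\tw)$ (this is exactly the computation that produces the class polynomials $f_{\tw,\co} \in \NN[v - v^{-1}]$ mentioned in the introduction). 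Both $\tw'$ and $\tw''$ have strictly smaller length, so the inductive hypothesis applies and we conclude that $T_{\tw}$ is in the span of $\{T_\co\}$. Since the $T_{\tw}$ span $\ch$, the $\{T_\co\}$ span $\bar\ch$.

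\smallskip

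For part (2), the issue is to show that in the equal-parameter case the (finitely many) elements $\{T_\co\}$ are $\ca$-linearly independent in $\bar\ch$. I would produce a family of linear functionals on $\bar\ch$ separating them. The natural candidates are trace functions, e.g.\ built from the trace map $\tr : \bar\ch \to R(\ch)^*$, or — more concretely — one can specialize $q_s^{1/2} = 1$ to recover the group algebra $\ca[\tW]$, whose cocenter is free with basis indexed by conjugacy classes of $\tW$, and observe that under this specialization $T_\co \mapsto [w_\co]$, the class sum basis element. A cleaner route: it is a theorem (essentially the "dual" of the Iwahori--Matsumoto basis, proved in \cite{HN2}) that $\bar\ch$ is a free $\ca$-module; one shows the natural surjection $\ca^{(\text{conj.\ classes})} \twoheadrightarrow \bar\ch$ from part (1) becomes an isomorphism after the faithfully flat base change to the fraction field, by comparing ranks — the rank of $\bar\ch$ is computed via the decomposition of $\ch$ over the generic point into matrix algebras (whose cocenters are one-dimensional each), and the count of these matrix blocks matches the number of conjugacy classes of $\tW$. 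The equal-parameter hypothesis enters precisely here: it guarantees the generic semisimplicity / block structure needed for this dimension count, which can fail for unequal parameters.

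\smallskip

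The main obstacle is part (2): establishing linear independence rigorously, since it is genuinely global information about $\bar\ch$ rather than something one reads off relations. I expect to lean on the structure theory of $\ch$ over $\mathrm{Frac}(\ca)$ (semisimplicity, Bernstein's description of the center, the classification of blocks) to pin down $\dim_{\mathrm{Frac}(\ca)} \overline{\ch}_{\mathrm{Frac}(\ca)}$ and match it to the number of conjugacy classes of $\tW$; alternatively, citing the explicit dual basis construction from \cite{HN2}. Part (1), by contrast, is a clean downward induction on length using only the quadratic relation and Theorem~\ref{min}(1), and I anticipate no difficulty there beyond bookkeeping of the lower-length terms.
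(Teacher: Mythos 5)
Your part (1) is correct and is exactly the argument the paper intends: the paper itself gives no proof of Theorem \ref{IMbasis}, but cites \cite[Theorem 5.3 \& Theorem 6.7]{HN2}, and the length induction you describe --- conjugation by $T_s^{\pm1}$ for the equal-length steps of a chain from Theorem \ref{min}(1), and the quadratic relation producing $T_{\tw}\equiv T_{s\tw_1 s}+(q_s^{\frac{1}{2}}-q_s^{-\frac{1}{2}})T_{s\tw_1}$ at a length-decreasing step --- is precisely the recursion the paper later uses to define the class polynomials. No complaints there.

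Part (2) is where the genuine gaps are. First, a factual error that undermines your main route: $\tW$ is infinite and has infinitely many conjugacy classes (already the translations $t^\l$ fall into infinitely many classes), so $\{T_\co\}$ is an infinite set, $\bar\ch$ has infinite rank over $\ca$, and $\ch$ base-changed to the fraction field of $\ca$ is not a finite direct sum of matrix algebras. The ``count the matrix blocks and compare ranks'' argument therefore does not exist. Second, the specialization $q_s^{\frac{1}{2}}\mapsto 1$ to the group algebra only shows that the coefficients $a_\co$ of a putative relation $\sum_\co a_\co T_\co\in[\ch,\ch]$ vanish at $v=1$, i.e.\ are divisible by $v-1$; to iterate and conclude $a_\co=0$ you would need to know that $\bar\ch$ has no $(v-1)$-torsion, which is essentially part of what is being proved, and specializing $v$ elsewhere does not help because the cocenter of the specialized algebra is exactly the unknown. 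The linear independence is a genuinely nontrivial global statement: the paper imports it wholesale from \cite[Theorem 6.7]{HN2} and even declines to prove it for unequal parameters (it only conjectures it there, pointing to the density theorem and the classification of irreducible representations as a possible approach). So for part (2) you must either cite \cite{HN2} as the paper does or reproduce the full argument from there; the sketches offered do not close the gap.
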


We call $\{T_\co\}$ the Iwahori-Matsumoto presentation of the cocenter $\bar \ch$ of affine Hecke algebra $\ch$.

The equal parameter case was proved in \cite[Theorem 5.3 \& Theorem 6.7]{HN2}. Part (1) for the unequal parameter case can be proved in the same way as in loc. cit. We expect that Part (2) remains valid for unequal parameter case. One possible approach is to use the classification of irreducible representations and a generalization of density theorem. We do not go into details in this paper.

\section{Some length formulas}

\subsection{} The strategy to prove Theorem B in this paper is as follows. For a given conjugacy class $\co$, we

\begin{itemize}

\item construct a minimal length element in $\co$, which is used for the Iwahori-Matsumoto presentation of $\bar \ch$;

\item  construct a suitable $J$, and an element in $\co \cap \tW_J$, of minimal length in its $\tW_J$-conjugacy class, which is used for the Bernstein presentation of $\bar \ch$;

\item find the explicit relation between the two different elements.
\end{itemize}

To do this, we need to relate the length function on $\tW$ with the length function on $\tW_J$ for some $J \subset S_0$. This is what we will do in this section. Another important technique is the ``partial conjugation'' method introduced in \cite{H1}, which will be discussed in the next section.

\subsection{} 

Let $n=\sharp(W_0 \rtimes \G)$. For any $\tw \in \tW$, $\tw^n=t^\l$ for some $\l \in X$. We set $\nu_{\tw}=\l/n \in V$ and call it the {\it Newton point of} $\tw$. Let $\bar \nu_{\tw}$ be the unique dominant element in the $W_0$-orbit of $\nu_{\tw}$. Then the map $\tW \to V, \tw \mapsto \bar \nu_{\tw}$ is constant on the conjugacy class of $\tW$. For any conjugacy class $\co$, we set $\nu_\co=\bar \nu_{\tw}$ for any $\tw \in \co$ and call it the {\it Newton point of} $\co$.

For $\tw \in \tW$, set \[V_{\tw}=\{v \in V; \tw(v)=v+\nu_{\tw}\}.\] By \cite[Lemma 2.2]{HN2}, $V_{\tw} \subset V$ is a nonempty affine subspace and $\tw V_{\tw}=V_{\tw}+\nu_{\tw}=V_{\tw}$. Let $p: \tW=X \rtimes (W_0 \rtimes \G) \to W_0 \rtimes \G$ be the projection map. Let $u$ be an element in $V_{\tw}$. By the definition of $V_{\tw}$, $V^{p(\tw)}=\{v-u; v \in V_{\tw}\}$. In particular, $\nu_{\tw} \in V^{p(\tw)}$.


Let $E \subset V$ be a convex subset. Set $\fH_E=\{H \in \fH; E \subset H\}$ and $W_E \subset W$ to be the subgroup generated by $s_H$ with $H \in \fH_E$. We say a point $p \in E$ is {\it regular} in $E$ if for any $H \in \fH$, $v \in H$ implies that $E \subset H$. Then regular points of $E$ form an open dense subset of $V_{\tw}$.

For any $\l \in V$, set $J_\l=\{s \in S_0; s(\l)=\l\}$.

\begin{prop}\label{mininW}
Let $\tw \in \tW$ such that $\bar C_0$ contains a regular point $e$ of $V_{\tw}$. Then $\tw$ is of minimal length in its conjugacy class if and only if it is of minimal length in its $W_{V_{\tw}}$-conjugacy class;
\end{prop}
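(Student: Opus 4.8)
The implication $(\Rightarrow)$ is immediate: the $W_{V_{\tw}}$-conjugacy class of $\tw$ is contained in its $\tW$-conjugacy class $\co$ and both contain $\tw$, so minimality of $\ell$ over $\co$ at $\tw$ forces minimality over the smaller class at $\tw$. All the work is in $(\Leftarrow)$, which I would attack through its contrapositive: \emph{if $\tw$ does not have minimal length in $\co$, then some element of the $W_{V_{\tw}}$-conjugacy class of $\tw$ is strictly shorter than $\tw$.}

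The first step is to unpack what the hypothesis $e\in\bar C_0$ buys. If a hyperplane $H=H_{\a,k}\in\fH$ contains $V_{\tw}$ then it contains $e$, and from $0\le\<e,\a^\vee\>\le1$ we get, after possibly replacing $\a$ by $-\a$, that $\a\in R^+$ and $k\in\{0,1\}$; in particular every generator $s_H$ of $W_{V_{\tw}}$ fixes $e$, hence fixes $V_{\tw}$ pointwise. Thus $W_{V_{\tw}}$ is a finite reflection group, it fixes $\nu_{\tw}$, it is contained in the stabilizer of $e$ in $W$ — which, because $e$ lies in the closed fundamental alcove, is the finite standard parabolic subgroup generated by $\{s\in S:s(e)=e\}$ — and, by the regularity of $e$ in $V_{\tw}$, a hyperplane that does \emph{not} contain $V_{\tw}$ avoids $e$ altogether. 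One further point, used repeatedly: $\tw$ normalizes $W_{V_{\tw}}$, since $\tw s_H\tw\i=s_{\tw H}$ and $\tw H\supseteq\tw V_{\tw}=V_{\tw}$ whenever $V_{\tw}\subseteq H$; consequently conjugating $\tw$ by an element of $W_{V_{\tw}}$ leaves $V_{(\cdot)}$ equal to $V_{\tw}$, so the hypothesis of the proposition (with the same $e$) persists along such conjugations.

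Now assume $\tw$ is not of minimal length in $\co$ and fix $\tw_1\in\co_{\min}$. By Theorem~\ref{min}(1) there is a chain of $\to$-moves from $\tw$ down to $\tw_1$; since $\ell(\tw)>\ell(\tw_1)$, following it to its first strictly decreasing step produces $\tw'\approx\tw$ and $s\in S$ with $\ell(s\tw's)<\ell(\tw')=\ell(\tw)$. The goal is to convert this witness of non-minimality into one that uses only conjugation by reflections fixing $V_{\tw}$. For the bookkeeping I would invoke the ``partial conjugation'' technique recalled in the next section, applied to the finite reflection group $W_{V_{\tw}}$: conjugating $\tw$ inside $W_{V_{\tw}}$ one reaches an element $\tw_0$ of minimal length in its $W_{V_{\tw}}$-conjugacy class, with $\ell(\tw_0)\le\ell(\tw)$ and (by the previous paragraph) the same $V_{(\cdot)}=V_{\tw}$ and regular point $e$; if $\ell(\tw_0)<\ell(\tw)$ we are done, so suppose $\ell(\tw_0)=\ell(\tw)$. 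The heart of the argument is then to show that $\tw_0$ is already of minimal length in $\co$ — i.e. that at this ``locally reduced'' stage no further length cancellation is possible at all. This is where the regularity of $e$ and the fundamental-domain property of $\bar C_0$ are decisive: analysing, via the length formula of \S\ref{length}, which hyperplanes separate $C_0$ from $\tw_0 C_0$, one shows that a hyperplane not containing $V_{\tw}$ (equivalently, not through $e$) already contributes its minimal possible amount and cannot take part in a cancelling configuration, so that any remaining length decrease would have to be carried by hyperplanes through $V_{\tw}$, i.e. by conjugation in $W_{V_{\tw}}$ — and that is exactly what minimality of $\tw_0$ in its $W_{V_{\tw}}$-class has excluded. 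Feeding this back, $\ell(\tw_1)=\ell(\tw_0)=\ell(\tw)$, contradicting the assumption.

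The main obstacle is precisely this last step: proving that once a regular point of $V_{\tw}$ has been brought into $\bar C_0$, every length cancellation available in the ambient class $\co$ is already detected inside $W_{V_{\tw}}$. Making this rigorous amounts to a careful analysis of the separating-hyperplane combinatorics, carried along the partial-conjugation reduction while keeping a regular point of the current $V_{(\cdot)}$ inside $\bar C_0$; in the extreme case where $V_{\tw}$ is a single point it specialises to the classical description of minimal-length elements in conjugacy classes of a finite Weyl group, and in general it rests on the finite-Weyl-group input underlying Theorem~\ref{min}. The necessity of the hypothesis on $e$ shows up here: without it the separating hyperplanes need not be ``minimally placed'', and the equivalence fails.
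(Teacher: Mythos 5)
Your forward implication and the preparatory observations (hyperplanes through $V_{\tw}$ pass through $e$, $W_{V_{\tw}}$ fixes $V_{\tw}$ pointwise, $\tw$ normalizes $W_{V_{\tw}}$, the hypothesis persists under $W_{V_{\tw}}$-conjugation) are all correct, and your overall reduction is the right shape. But the proposal has a genuine gap at exactly the point you flag as ``the main obstacle'': you never prove that an element which is of minimal length in its $W_{V_{\tw}}$-conjugacy class and has a regular point of $V_{\tw}$ in $\bar C_0$ is of minimal length in its full conjugacy class. That claim \emph{is} the hard direction of the proposition, so deferring it to ``a careful analysis of the separating-hyperplane combinatorics'' leaves the statement unproved. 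The paper closes this gap by quoting two quantitative results from \cite{HN2} (Propositions 2.5 and 2.8): first, under the regular-point hypothesis one has the exact decomposition $\ell(\tw)=\<\bar\nu_{\tw},2\rho^\vee\>+\sharp\fH_{V_{\tw}}(C_0,\tw C_0)$, i.e.\ the hyperplanes \emph{not} containing $V_{\tw}$ contribute precisely the constant $\<\bar\nu_{\tw},2\rho^\vee\>$; and second, the minimal length in the whole conjugacy class equals $\<\bar\nu_{\tw},2\rho^\vee\>+\min_C\sharp\fH_{V_{\tw}}(C,\tw C)$, the minimum taken over chambers $C$ of the arrangement $\fH_{V_{\tw}}$. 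Since $W_{V_{\tw}}$ acts simply transitively on those chambers and $\sharp\fH_{V_{\tw}}(xC_0,\tw xC_0)=\sharp\fH_{V_{\tw}}(C_0,x\i\tw xC_0)$, the minimum is $\min_{x\in W_{V_{\tw}}}\ell(x\i\tw x)$, and the proposition drops out. Without these (or equivalent) inputs, your argument only restates what is to be shown.

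A secondary problem: you propose to run the ``partial conjugation'' machinery (Theorem \ref{partial}) with respect to $W_{V_{\tw}}$, but that theorem is stated for standard parabolic subgroups $W_J$, $J\subset S$, whereas $W_{V_{\tw}}$ is merely a reflection subgroup of the finite parabolic stabilizing $e$ and need not be standard parabolic; so that step is not licensed as written. If you want a self-contained proof rather than a citation, the piece you must actually establish is the first displayed identity above — that bringing a regular point of $V_{\tw}$ into $\bar C_0$ forces every hyperplane off $V_{\tw}$ to be ``minimally placed'' — and that requires the explicit length formula of \S\ref{length} together with an argument that conjugation cannot reduce the $\<\bar\nu_{\tw},2\rho^\vee\>$ term; neither is routine.
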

\begin{proof}
Note that for any $x \in W_{V_{\tw}}$, $\bar C_0$ contains a regular point of $V_{\tw}=x\i V_{\tw}=V_{x\i \tw x}$, hence by \cite[Proposition 2.5 \& Proposition 2.8]{HN2}, the minimal length of elements in the conjugacy class of $\tw$ equals \begin{align*} \<\bar \nu_{\tw}, \rho^\vee\>+\min_C \sharp \fH_{V_{\tw}}(C, \tw C) &=\<\bar \nu_{\tw}, \rho^\vee\>+\min_{x \in W_{V_{\tw}}} \sharp \fH_{V_{\tw}}(x C_0, \tw x C_0)\\ &=\<\bar \nu_{\tw}, \rho^\vee\>+\min_{x \in W_{V_{\tw}}} \sharp \fH_{V_{\tw}}(C_0, x\i \tw x C_0) \\ &=\min_{x \in W_{V_{\tw}}} \ell(x\i \tw x), \end{align*} where $C$ ranges over all connected components of $V-\cup_{H \in \fH_{\tw}}H$ and $\rho^\vee=\frac{1}{2}\sum_{\a \in R^+}\a^\vee$.
\end{proof}

\begin{prop}\label{fact'}
Let $\tw \in \tW$ such that $\bar C_0$ contains a regular point of $V_{\tw}$. Let $J \subset S_0$. Assume there exists $z \in {}^J W_0$ such that $z \tw z\i \in \tW_J$. Then $$\ell(\tw)=\ell_J(z \tw z\i)+\<\bar \nu_{\tw}, 2 \rho^\vee\>-\<\bar \nu_{z \tw z\i}^J, 2 \rho_J^\vee\>,$$ where $\bar \nu_{z \tw z\i}^J$ denotes the unique $J$-dominant element in the $W_J$-orbit of $\nu_{z \tw z\i}$ and $\rho_J^\vee=\frac{1}{2}\sum_{\a \in R_J^+}\a^\vee$. In particular, if $J \subset J_{\nu_{z \tw z\i}}$, we have $$\ell(\tw)=\ell_J(z \tw z\i)+\<\bar \nu_{\tw}, 2 \rho^\vee\>.$$
\end{prop}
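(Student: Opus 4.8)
The plan is to compute $\ell(\tw)$ directly from the length formula in \S\ref{length}, using the hypothesis that $z \in {}^J W_0$ conjugates $\tw$ into $\tW_J$. Write $\tw = t^\chi w$ with $\chi \in X$, $w \in W_0 \rtimes \G$, and observe that $z \tw z\i = t^{z(\chi)} (z w z\i)$ with $z w z\i \in W_J \rtimes \G_J$. The key combinatorial input is a clean splitting of the set of affine root hyperplanes separating $C_0$ from $\tw C_0$ into those coming from $R_J$ and those coming from $R^+ \setminus R_J^+$. Since $z \in {}^J W_0$ is the minimal coset representative, $z\i(R_J^+) \subset R^+$, and this should let me identify the $R_J$-part of the count with $\ell_J(z\tw z\i)$ after translating by $z$. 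The remaining roots, those in $R^+ \setminus R_J^+$, must be handled using the regularity hypothesis on $V_{\tw}$: because $\bar C_0$ contains a regular point $e$ of $V_{\tw}$ and $\nu_{\tw} \in V^{p(\tw)}$ (so $e$ is essentially a fixed-up-to-translation point), the contribution of each such root $\a$ to $\ell(t^\chi w)$ should reduce to $|\<\chi, \a^\vee\>|$ or $|\<\chi,\a^\vee\> - 1|$ with a predictable sign governed by whether $\<e, \a^\vee\>$ is positive, i.e., by dominance of $\nu_{\tw}$.

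Concretely, the main computation I expect to carry out is as follows. First, using Proposition \ref{mininW} (or rather its proof-ingredient from \cite[Proposition 2.5]{HN2}), rewrite $\ell(\tw)$ as $\<\bar\nu_{\tw}, 2\rho^\vee\> + \sharp\fH_{V_{\tw}}(C_0, \tw C_0)$ \emph{provided} $\tw$ is suitably chosen; but since the statement is for arbitrary $\tw$ with the regularity property, I would instead directly split
\[
\ell(\tw) = \sum_{\a \in R^+ \setminus R_J^+} n_\a(\tw) + \sum_{\a \in R_J^+} n_\a(\tw),
\]
where $n_\a(\tw)$ denotes the number of hyperplanes $H_{\a,k}$ separating $C_0$ from $\tw C_0$. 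For $\a \in R^+ \setminus R_J^+$: here $w\i(\a)$ and $\<\chi,\a^\vee\>$ interact, but the regular point $e \in \bar C_0 \cap V_{\tw}$ forces $\<e,\a^\vee\> \ge 0$ and the fact that $\tw e = e + \nu_{\tw}$ pins down the sign of $\<\chi, \a^\vee\>$ relative to $w$; summing these contributions yields exactly $\<\bar\nu_{\tw}, 2\rho^\vee\> - \<\bar\nu_{z\tw z\i}^J, 2\rho_J^\vee\>$, by the standard identity $\<\bar\nu_{\tw}, 2\rho^\vee\> = \sum_{\a \in R^+} |\<\nu_{\tw}, \a^\vee\>|$ and the analogous identity inside $R_J$. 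For $\a \in R_J^+$: applying $z$ and using $z \in {}^J W_0$, I match $\sum_{\a \in R_J^+} n_\a(\tw)$ with $\ell_J(z\tw z\i)$, the length computed with respect to $\fH_J$ and $C_J$; this uses that $z$ maps $C_0$ into the $W_J$-chamber cut out by $C_J$ and preserves the relevant hyperplane-separation counts for roots in $R_J$.

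For the "in particular" clause: if $J \subset J_{\nu_{z\tw z\i}}$, then $\nu_{z\tw z\i}$ is fixed by $W_J$, so it is already $J$-dominant, giving $\bar\nu_{z\tw z\i}^J = \nu_{z\tw z\i}$, and hence $\<\bar\nu_{z\tw z\i}^J, 2\rho_J^\vee\> = \<\nu_{z\tw z\i}, 2\rho_J^\vee\> = 0$ because $\<\nu, \a^\vee\> = 0$ for all $\a \in R_J$ when $\nu$ is $W_J$-invariant (each $\a^\vee$ is a difference $\nu - s_\a\nu$ up to scalar... more precisely $s_\a \nu = \nu$ forces $\<\nu,\a^\vee\>=0$). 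Substituting into the main formula gives the second displayed equation.

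The step I expect to be the main obstacle is the bookkeeping of signs in the $R^+ \setminus R_J^+$ sum — making precise how the regularity of $e$ in $V_{\tw}$ (together with $\tw V_{\tw} = V_{\tw} + \nu_{\tw}$) simultaneously controls both $\mathrm{sgn}\<\chi, \a^\vee\>$ and whether $w\i(\a) \in R^+$, so that $n_\a(\tw)$ collapses to $\pm\<\nu_{\tw}, \a^\vee\>$ with no stray $\pm 1$ corrections surviving the sum. I would likely isolate this as a lemma about $\ell(\tw)$ in terms of $\<e, \cdot\>$ for $e$ a regular point of $V_{\tw}$ in $\bar C_0$, possibly already available in the form of \cite[Proposition 2.5 \& 2.8]{HN2} cited above, and then the matching with $\ell_J$ is comparatively routine given the properties of ${}^J W_0$.
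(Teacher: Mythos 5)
Your overall strategy --- isolate the $z\i(R_J)$-directions to recover $\ell_J(z\tw z\i)$ and attribute everything else to the Newton point --- has the right shape, and your treatment of the ``in particular'' clause is correct. But the step you yourself flag as the main obstacle is where the proposal genuinely breaks: it is \emph{not} true, even under the regularity hypothesis, that $n_\a(\tw)=|\<\nu_{\tw},\a^\vee\>|$ for each individual direction $\a$ outside $z\i(R_J^+)$. Indeed $\<\nu_{\tw},\a^\vee\>$ need not even be an integer. For $\tW$ of type $GL_4$, take $\tw=t^{(1,0,0,0)}(1\,2)$ and $J=\{(1\,2)\}$, $z=1$; then $\nu_{\tw}=(\tfrac12,\tfrac12,0,0)$, the point $(0.3,-0.2,-0.3,-0.4)\in \bar C_0$ is a regular point of $V_{\tw}=\{v_1-v_2=\tfrac12\}$, and for $\a=e_2-e_3$, $\a'=e_1-e_3$ (both outside $R_J$) the Iwahori--Matsumoto formula gives $n_\a=0$ and $n_{\a'}=1$, while $|\<\nu_{\tw},\a^\vee\>|=|\<\nu_{\tw},\a'^\vee\>|=\tfrac12$. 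The identity you need only holds after summing over $\<p(\tw)\>$-orbits of directions (here $\{\pm\a,\pm\a'\}$); making that precise requires an averaging argument of the kind used in Lemma \ref{property-y}(2), and is a missing idea rather than bookkeeping. Note also that \cite[Proposition 2.8]{HN2}, to which you defer, gives only the aggregate statement, with the correction term indexed by hyperplanes \emph{containing} $V_{\tw}$ --- a different splitting from yours by root direction.

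The paper's proof sidesteps all of this: it applies \cite[Proposition 2.8]{HN2} twice as a black box, once to $\ell$, giving $\ell(\tw)=\<\bar\nu_{\tw},2\rho^\vee\>+\sharp\fH_{V_{\tw}}(C,\tw C)$, and once to $\ell_J$, giving $\ell_J(z\tw z\i)=\<\bar\nu^J_{z\tw z\i},2\rho_J^\vee\>+\sharp\fH_{V_{z\tw z\i}}(C',z\tw z\i C')$ (the hypotheses are verified using $zC_0\subset C_J$, and the intersection with $\fH_J$ is dropped because $\fH_{V_{z\tw z\i}}\subset\fH_J$); then $H\mapsto zH$ is a bijection between the two correction sets, so the corrections cancel in the difference. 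If you replace your direction-by-direction claim with this double application of the known formula, your argument collapses to the paper's; if you insist on the direction splitting, you must reorganize the non-$R_J$ sum into $p(\tw)$-orbits and prove the orbitwise identity, which is essentially reproving \cite[Proposition 2.8]{HN2}.
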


\begin{proof}
By \cite[Proposition 2.8]{HN2} we have $$\ell(\tw)=\<\bar \nu_{\tw}, 2\rho^\vee\>+\sharp\fH_{V_{\tw}}(C, \tw C),$$ where $C$ is the connected component of $V-\cup_{H \in \fH_{V_{\tw}}}H$ containing $C_0$. Since $z \in {}^J W_0$, $z C_0 \subset C_J$ and hence $\bar C_J$ contains a regular point of $z V_{\tw}=V_{z \tw z\i}$. Applying \cite[Proposition 2.8]{HN2} to $\ell_J$ instead of $\ell$ we obtain \begin{align*}\ell_J(z \tw z\i)&=\<\bar \nu_{\tw}^J, 2\rho_J^\vee\>+\sharp \bigl(\fH_J \cap \fH_{V_{z \tw z\i}}(C', z \tw z\i C') \bigr) \\ &=\<\bar \nu_{\tw}^J, 2\rho_J^\vee\>+\sharp \fH_{V_{z \tw z\i}}(C', z \tw z\i C'),\end{align*} where $C'$ is the connected component of $V-\cup_{H \in \fH_{V_{z \tw z\i}}}H$ containing $C_J$ and the second equality follows from the fact that $\fH_{V_{z \tw z\i}} \subset \fH_J$. Since $z C = C'$, the map $H \mapsto z H$ induces a bijection between $\fH_{V_{\tw}}(C, \tw C)$ and $\fH_{V_{z \tw z\i}}(C', z \tw z\i C')$.
\end{proof}

\begin{lem}\label{zs}
Let $J \subset S_0$ and $z \in {}^J W_0$. Let $s \in S$ and $t=z s z \i$.

(1) If $t \in \tW_J$, then $\ell_J(t)=1$.

(2) If $t \notin \tW_J$, then $z s=x z'$ for some $x \in \tW_J$ with $\ell_J(x)=0$ and $z' \in {}^J W_0$.
\end{lem}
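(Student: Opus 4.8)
### Proof proposal for Lemma \ref{zs}

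The plan is to work directly with the length formula from \S\ref{length} together with the characterization of minimal coset representatives. Recall that $z \in {}^J W_0$ means precisely that $z\i(\a) \in R^+$ for all $\a \in J$, equivalently that $\ell(s_\a z) = \ell(z) + 1$ for all $\a \in J$, and more generally $\ell(wz) = \ell(w) + \ell(z)$ for all $w \in W_J$. A convenient reformulation: $z$ maps the positive roots of $R_J$ into $R^+$. Since $s \in S$ is a simple reflection of $W$, it is of the form $s = s_\beta$ for an affine simple root; in finite-type language, either $s \in S_0$ (so $s = s_\a$ for $\a \in F_0$) or $s = t^{\theta^\vee}s_\theta$ for a highest root $\theta$ (one per irreducible component); in either case $s$ is the reflection $s_H$ along a wall $H$ of $C_0$, and $t = zsz\i = s_{zH}$ is the reflection along the wall $zH$ of $zC_0$.

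For part (1): suppose $t \in \tW_J$. Since $t$ is a reflection $s_{zH}$ and lies in $\tW_J$, the hyperplane $zH$ must be of the form $H_{\a,k}$ with $\a \in R_J$; thus $t$ is a reflection in the Coxeter system $(W_J^{\mathrm{aff}}, \ldots)$ attached to $\fR_J$. To conclude $\ell_J(t) = 1$, i.e.\ that $t$ is a \emph{simple} reflection of $\tW_J$, I would argue that the wall $zH$ is a wall of the alcove $zC_0$, and since $z \in {}^J W_0$ we have $zC_0 \subset C_J$ (as noted in the proof of Proposition \ref{fact'}); hence $zH$ separates $zC_0$ from exactly one adjacent alcove inside the $\fH_J$-arrangement that also touches $C_J$, which forces $zH$ to be a wall of $C_J$ itself, and therefore $t = s_{zH}$ is a simple reflection for $\tW_J$, giving $\ell_J(t) = 1$. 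One can make this completely explicit: writing $s = s_H$ with $H$ a wall of $C_0$, the condition $t \in \tW_J$ together with $z$ being $J$-reduced pins down that $zH$ is a wall of $C_J$; alternatively one computes $\ell_J(t)$ from the length formula and checks it equals $1$.

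For part (2): suppose $t = zsz\i \notin \tW_J$. Consider the element $zs \in W_0$ (note $s \in S_0$ in the relevant generating-set sense after reduction, or handle the affine case by the same bookkeeping) and apply the standard decomposition $\tW = \tW_J \cdot {}^J\tW$ — more precisely, write $zs = x z'$ with $z' \in {}^J W_0$ the minimal representative of the coset $W_J zs$ and $x \in W_J \rtimes \G_J$. I must show $\ell_J(x) = 0$, i.e.\ $x \in \G_J$ (or $x \in \Om \cap \tW_J$). The point is that $z$ and $zs$ differ by a single simple reflection on the right, so the cosets $W_J z = W_J z'$ would need... more care: in fact $W_J zs$ and $W_J z$ are either equal or the ``distance'' between them is controlled. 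Since $t = (zs)z\i \notin \tW_J$, the coset $W_J zs \ne W_J z$, so $z' \ne z$; but passing from $z$ to $zs$ changes length by exactly $1$, and a short computation with $\ell(xz') = \ell(x) + \ell(z')$ shows $\ell(x) + \ell(z') = \ell(zs) \in \{\ell(z)-1, \ell(z)+1\}$, while $\ell(z') \ge \ell(z) - \ell_J(x)$-type inequalities together with $z,z' \in {}^JW_0$ force $\ell_J(x) = 0$. I would carry this out by comparing $\ell(z)$, $\ell(zs) = \ell(z)\pm 1$, and $\ell(z') = \ell(xz') - \ell(x)$ and using that both $z$ and $z'$ are $J$-reduced of nearly equal length.

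The main obstacle I anticipate is part (1) — specifically, rigorously upgrading ``$t$ is a reflection lying in $\tW_J$'' to ``$t$ is a \emph{simple} reflection of $\tW_J$.'' This is where the hypothesis $z \in {}^J W_0$ does essential work (a general conjugate of a simple reflection need not be simple), and the cleanest route is the geometric one via walls of $C_J$ and the containment $zC_0 \subset C_J$; making that wall-counting argument airtight, rather than hand-wavy, is the delicate point. Part (2) is essentially bookkeeping with the length function and the double-coset combinatorics $\tW_J \backslash \tW$, which should go through routinely.
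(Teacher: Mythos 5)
Your part (1) is, in substance, the paper's own argument: $s$ is the reflection along a wall $H$ of $C_0$, so $\bar C_0$ contains a regular point of $H$; since $z\in {}^J W_0$ we have $zC_0\subset C_J$, so $\bar C_J$ contains a regular point of $zH$; and if $t\in\tW_J$ then $zH\in\fH_J$, whence $zH$ is a wall of $C_J$ and $t=s_{zH}$ has $\ell_J(t)=1$. The step you flagged as delicate is exactly this transport of a regular point, and it closes cleanly, so part (1) is fine.

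Part (2) has a genuine gap. The only nontrivial case is the one you defer: when $s$ is the affine simple reflection $s=t^{\th}s_{\th}$ with $\th^\vee$ a maximal coroot. (If $s\in S_0$ and $t\notin W_J$, the standard dichotomy for ${}^J W_0$ gives $zs\in {}^J W_0$ directly, so $x=1$ and there is nothing to prove.) In the affine case the factorization is $zs=t^{z(\th)}u z'$ with $u\in W_J$ and $z'\in {}^J W_0$, so $x=t^{z(\th)}u$ has a nontrivial translation part. Your identification of ``$\ell_J(x)=0$'' with ``$x\in\G_J$ or $x\in\Om\cap\tW_J$'' is incorrect: the set $\{x\in\tW_J;\ \ell_J(x)=0\}$ is in general much larger and contains many elements $t^\mu u$ with $u\neq 1$. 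For the same reason, the proposed bookkeeping comparing $\ell(z)$, $\ell(zs)=\ell(z)\pm 1$ and $\ell(z')$ cannot succeed: $\ell_J$ is not the restriction of $\ell$ to $\tW_J$ (indeed $\ell(x)$ may be large while $\ell_J(x)=0$), so no comparison of $\ell$-lengths detects $\ell_J(t^{z(\th)}u)$. What is actually needed, and what the paper supplies, is the explicit computation that for every $\a\in R_J^+$ one has $\<z(\th),\a^\vee\>=1$ if $u\i(\a)<0$ and $\<z(\th),\a^\vee\>=0$ otherwise --- using that $z,z'\in {}^J W_0$ and that $\th^\vee$ is maximal --- so that every term in the Iwahori--Matsumoto length formula for $\ell_J(t^{z(\th)}u)$ vanishes. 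That computation is the real content of part (2) and is missing from your proposal.
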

\begin{proof}
Assume $s=s_H$ is the reflection along some hyperplane $H \in \fH$. Since $s \in S$, $\bar C_0$ contains some regular point of $H$. Since $z \in {}^J W_0$, $z C_0 \subset C_J$. If $t \in \tW_J$, then $\bar C_J$ contains some regular point of $H'=z H$ and hence $t=s_{H'}$ is of length one with respect to $\ell_J$.

If $t \in W_0-W_J$, then $s \in S_0$ and $z s \in {}^J W_0$. In this case, $z'=z s$ and $x=1$. If $t \notin \tW_J \cup W_0$, then $s=t^{\th}s_{\th}$ for some maximal coroot $\th^\vee$ with $z(\th) \notin R_J$. Then $zs=t^{z(\th)} u z'$ for some $u \in W_J$ and $z' \in {}^J W_0$. Let $\a \in R_J^+$. Since $z', z \in {}^J W_0$ and that $\th^\vee$ is a maximal coroot, we have $$\<z(\th), \a^\vee\>=\left\{
                 \begin{array}{ll}
                   1, & \hbox{if $u\i(\a)<0$;} \\
                   0, & \hbox{Otherwise.}
                 \end{array}
               \right.
$$
In other words, $\ell_J(t^{z(\th)} u)=0$.
\end{proof}

\begin{cor}\label{cor4.4}
Let $\tw' \in \tW$ and $z' \in {}^J W_0$ such that $z' \tw {z'}\i \in \tW_J$. Let $s \in S$ such that $\tw'$ and $\tw=s \tw' s$ are of the same length. Let $z$ be the unique minimal element of the coset $W_J z' p(s)$. Then $z \tw z\i$ and $z' \tw' {z'}\i$ belong to the same $\tW_J$-conjugacy class and $$\ell_J(z \tw z\i)=\ell_J(z' \tw' {z'}\i).$$
\end{cor}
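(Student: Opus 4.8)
The plan is to reduce Corollary~\ref{cor4.4} to the two cases of Lemma~\ref{zs} applied to the simple reflection $s$, tracking what happens to both the conjugating element $z'$ and the element $\tw'$ under the single step $\tw' \leadsto s\tw' s$.  First I would observe that since $\ell(\tw)=\ell(\tw')$ and $\tw = s\tw's$, we have $\tw'\xrightarrow{s}\tw$ (and conversely), so in particular $sw' \neq w's$ is not forced, but in all cases $T_{\tw}$ and $T_{\tw'}$ are related by the $\approx$ relation; what matters here is purely combinatorial.  Set $t = z' p(s) {z'}^{-1}$-type elements and let $z$ be the minimal-length representative of $W_J z' p(s)$.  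The key identity to establish is $z\,(s\tw's)\,z^{-1} = x\,(z'\tw'{z'}^{-1})\,x^{-1}$ for a suitable $x\in\tW_J$ with $\ell_J(x)=0$ or $\ell_J(x)=1$, so that the two elements are $\tW_J$-conjugate; then one argues the lengths agree.

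The main case division follows Lemma~\ref{zs}.  In case (2), where $t = z' p(s) {z'}^{-1}\notin\tW_J$ (here I mean the conjugate by the finite part, matching the lemma's setup after projecting), the lemma gives $z' p(s) = x z''$ with $x\in\tW_J$ of $\ell_J$-length $0$ and $z''\in{}^J W_0$; the minimality of $z$ in $W_J z' p(s)$ forces $z = z''$ up to the length-zero factor, and a direct substitution $z(s\tw's)z^{-1} = z' p(s)\,(s\tw's)\,p(s)^{-1}{z'}^{-1}\cdot(\text{correction})$ collapses to $x\,(z'\tw'{z'}^{-1})\,x^{-1}$.  Since conjugation by a length-zero element of $\tW_J$ preserves $\ell_J$, we get $\ell_J(z\tw z^{-1}) = \ell_J(z'\tw'{z'}^{-1})$.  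In case (1), where $t\in\tW_J$ has $\ell_J(t)=1$, I would use that $\tw' \xrightarrow{s}\tw$ translates, via the conjugation by $z'$, into $z'\tw'{z'}^{-1}\xrightarrow{\,z'sz'^{-1}\,}$ an element in the same $\tW_J$-conjugacy class with $\ell_J$ non-increasing; but by symmetry (running the step backwards, $\tw\xrightarrow{s}\tw'$) the $\ell_J$-length cannot strictly decrease in either direction, so it is preserved.  One then checks $z\tw z^{-1}$ is exactly the resulting element, using again that $z$ versus $z'$ differ by the $W_J$-coset bookkeeping of $p(s)$.

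The step I expect to be the main obstacle is the careful verification that the minimal representative $z$ of $W_J z' p(s)$ is, in each case, precisely the element produced by Lemma~\ref{zs} (namely $z''$ in case (2), and $z'$ itself or $z'p(s)$ in case (1)), together with pinning down the explicit length-zero or length-one correction element $x\in\tW_J$.  This is where the interaction between $p(s)$ possibly being an affine reflection (so $s = t^{\th}s_{\th}$) and the translation part $t^{z'(\th)}$ must be handled: one needs the computation at the end of Lemma~\ref{zs}'s proof showing $\ell_J(t^{z'(\th)}u)=0$, and then to see that conjugating $s\tw's$ by this element reproduces $z'\tw'{z'}^{-1}$ up to $\tW_J$-conjugacy without changing $\ell_J$.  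Once the bookkeeping of cosets and the translation parts is set up correctly, the length equality is immediate from the two structural facts: length-zero elements of $\tW_J$ act on $\ell_J$ trivially by conjugation, and a single $\xrightarrow{\,t\,}$ step with $t$ a simple reflection of $\tW_J$ between two elements of equal $\ell$-length (hence equal $\ell_J$-length by Proposition~\ref{fact'}) preserves $\ell_J$.
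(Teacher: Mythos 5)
Your overall structure---splitting into the two cases of Lemma \ref{zs} according to whether $t=z's{z'}\i$ lies in $\tW_J$---is the same as the paper's, and your treatment of the case $t\notin\tW_J$ is essentially the paper's argument: writing $z's=xz''$ with $\ell_J(x)=0$ and $z''\in{}^JW_0$, minimality gives $z=z''$, one computes $z\tw z\i=x\i\bigl(z'\tw'{z'}\i\bigr)x$, and conjugation by a length-zero element of $\tW_J$ preserves $\ell_J$. That part is fine, modulo making the ``correction'' factor explicit.

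The gap is in the case $t\in\tW_J$. You assert that the step $z'\tw'{z'}\i\mapsto t\,z'\tw'{z'}\i\,t$ is ``$\ell_J$-non-increasing'' and then invoke symmetry; but non-increase under a single conjugation by $t$ is precisely what has to be proved---conjugation by a simple reflection can a priori raise the length by $2$, so the symmetry argument has nothing to get started on. Your fallback, that equal $\ell$-length forces equal $\ell_J$-length ``by Proposition \ref{fact'}'', does not work either: that proposition requires $\bar C_0$ to contain a regular point of $V_{\tw}$, which is not a hypothesis of the corollary; indeed the corollary is invoked in Proposition \ref{mm} exactly in order to reduce to the situation where that regularity holds, so assuming it here would be circular. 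The missing ingredient is the geometric transfer of Bruhat-order information: after normalizing to $\tw's>\tw'>s\tw'$, the reflection hyperplane $H$ of $s$ separates $C_0$ from $\tw'C_0$; since $z'C_0\subset C_J$ and $z'H\in\fH_J$, the hyperplane $z'H$ separates $C_J$ from $z'\tw'{z'}\i C_J$, whence $t\,z'\tw'{z'}\i<z'\tw'{z'}\i$ with respect to $\ell_J$, and the same reasoning applied to $\tw's>\tw'$ gives $z'\tw'{z'}\i\,t>z'\tw'{z'}\i$. These two inequalities together yield $\ell_J(t\,z'\tw'{z'}\i\,t)=\ell_J(z'\tw'{z'}\i)$. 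Without this (or an equivalent) argument the length equality in your case (1) is unproved.
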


\begin{proof}
The first statement follow form the construction of $z$.

Without loss of generality, we may assume that $\tw' s >  \tw' > s \tw'$. Let $t=z' s {z'}\i$. If $t \in \tW_J$, then $\ell_J(t)=1$ by Lemma \ref{zs}. Since $\tw' > s \tw'$, the reflection hyperplane $H \in \fH$ of $s$ separates $C_0$ from $\tw' C_0$. Hence $z' H$ separates $C_J$ from $z' \tw' {z'}\i C_J$ since $z' C_0 \subset C_J$, which means that $z' \tw' {z'}\i > t z' \tw' {z'}\i$. Similarly, $z' \tw' {z'}\i t > z' \tw' {z'}\i$. Therefore $\ell_J(z \tw z\i)=\ell_J(t z' \tw' {z'}\i t)=\ell_J(z' \tw' {z'}\i)$.

If $t \notin \tW_J$, then $z s=x \i z'$ for some $x \in \tW_J$ with $\ell_J(x)=0$. Hence $z \tw z\i=x \i z' \tw' {z'}\i x$ and $\ell_J(z \tw z\i)=\ell_J(z' \tw' {z'}\i)$.
\end{proof}

\begin{prop} \label{mm}
Let $\co$ be a conjugacy class of $\tW$ and $J \subset S_0$ such that $\co \cap \tW_J \neq \emptyset$. Let $\tw \in \co_{\min}$ and $z \in {}^J W_0$, such that $z \tw z\i \in \tW_J$. Then $z \tw z\i$ is of minimal length (with respect to $\ell_J$) in its $\tW_J$-conjugayc class.
\end{prop}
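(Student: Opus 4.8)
The plan is to reduce to Proposition \ref{fact'} by first arranging that $\tw$ satisfies its hypothesis — namely that $\bar C_0$ contains a regular point of $V_{\tw}$ — and then comparing minimal lengths in $\tW$ and $\tW_J$ term by term. First I would recall from Theorem \ref{min}(1) that every element of $\co$ can be brought by a sequence of conjugations $w \to w'$ (each step a length-nonincreasing conjugation by a simple reflection) to an element of $\co_{\min}$, and that by \cite[Proposition 2.8]{HN2} (as invoked in Proposition \ref{fact'}) a minimal length element of $\co$ has the property that $\bar C_0$ contains a regular point of its $V$-subspace. So I would pick $\tw_1 \in \co_{\min}$ of this special form, together with $z_1 \in {}^J W_0$ with $z_1 \tw_1 z_1\i \in \tW_J$ (such $z_1$ exists because $\co \cap \tW_J \neq \emptyset$ and the $\tW_J$-conjugates of a fixed element of $\co \cap \tW_J$ are exactly $\{z^{-1} \tw_1 z\}$ for appropriate $z$; one should check here that among all $x\in\tW$ with $x\tw_1 x\i\in\tW_J$ one can choose $x\i$ lying in ${}^JW_0$, which follows from the fact that conjugating by $W_J$ or by $t^\l$ with $\l$ in the relevant span stays inside $\tW_J$).

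Next I would use Corollary \ref{cor4.4} as the transport mechanism. Given the chain $\tw = \tw^{(0)}, \tw^{(1)}, \dots, \tw^{(n)} = \tw_1$ of simple-reflection conjugations of equal length connecting an arbitrary $\tw\in\co_{\min}$ to $\tw_1$ (equal length because both are in $\co_{\min}$; by Theorem \ref{min}(2) they are even strongly conjugate, which after decomposing into elementary strong conjugations gives exactly such a chain, possibly after also conjugating by an element of $\Om$, which I would handle separately since conjugation by $\Om$ preserves both $\ell$ and $\ell_J$ suitably), I apply Corollary \ref{cor4.4} repeatedly: starting from $z_1$ and going backwards along the chain, I produce $z = z^{(0)} \in {}^J W_0$ with $z \tw z\i \in \tW_J$ in the same $\tW_J$-conjugacy class as $z_1 \tw_1 z_1\i$, and with $\ell_J(z\tw z\i) = \ell_J(z_1 \tw_1 z_1\i)$. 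This shows that the quantity $\ell_J(z\tw z\i)$ does not depend on the choice of $\tw\in\co_{\min}$ and the compatible $z$; I must also note that for a fixed $\tw$ the element $z\in{}^JW_0$ with $z\tw z\i\in\tW_J$ need not be unique, but any two differ by the ambiguity analyzed in Lemma \ref{zs}(2), which again preserves $\ell_J$.

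Finally I would close the argument by a minimality/optimality comparison. On one hand, for the special representative $\tw_1$, Proposition \ref{fact'} gives $\ell(\tw_1) = \ell_J(z_1\tw_1 z_1\i) + \<\bar\nu_{\tw_1}, 2\rho^\vee\> - \<\bar\nu^J_{z_1\tw_1 z_1\i}, 2\rho_J^\vee\>$. On the other hand, take any element $u \in (\co\cap\tW_J)$ of minimal $\ell_J$-length in its $\tW_J$-conjugacy class; applying Proposition \ref{fact'} in the reverse direction — that is, writing $u = z_2 \tw_2 z_2\i$ for a suitable minimal-length-in-$\co$ element $\tw_2$ of the special form and $z_2\in{}^JW_0$ — together with the fact that $\ell(\tw_2)\ge \ell(\tw_1)$ is actually an equality (both in $\co_{\min}$), and that the Newton-point correction terms $\<\bar\nu_{\bullet},2\rho^\vee\>$ and $\<\bar\nu^J_{\bullet},2\rho_J^\vee\>$ depend only on $\co$ and on the $\tW_J$-conjugacy class respectively, I get $\ell_J(z\tw z\i) = \ell_J(z_1\tw_1 z_1\i) = \ell_J(u) = \min$. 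I expect the main obstacle to be the bookkeeping in this last step: making sure that every element of $\co\cap\tW_J$ of minimal $\ell_J$-length arises as $z_2\tw_2 z_2\i$ with $\tw_2$ of the special form required by Proposition \ref{fact'} and $z_2\in{}^JW_0$, i.e.\ that one can always move from a minimal-$\ell_J$-length element of a $\tW_J$-class back to a minimal-$\ell$-length element of $\co$ via conjugations that are controlled by the $z\mapsto z'$ mechanism of Corollary \ref{cor4.4}; this requires running the partial-conjugation method of \cite{H1} (announced in the excerpt as the next section's tool) inside $\tW_J$ and checking its compatibility with the inclusion $\tW_J\hookrightarrow\tW$.
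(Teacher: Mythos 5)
Your first half is the paper's own reduction: using \cite[Proposition 2.5 \& Lemma 2.7]{HN2} to reach a representative $\tw'\in\co_{\min}$ with $\bar C_0$ containing a regular point of $V_{\tw'}$, and transporting $\ell_J(z\tw z\i)$ along the connecting chain via Corollary \ref{cor4.4}. (Two small inaccuracies there: not every element of $\co_{\min}$ has the regular-point property, only some element reachable by $\to$ does; and Corollary \ref{cor4.4} is stated for conjugation by a simple reflection, so you should run it along a chain for the relation $\to$ rather than along a decomposition into elementary strong conjugations, whose conjugating elements are general elements of $W$. Neither is fatal.)

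The genuine gap is in your closing step, and it is not mere bookkeeping. You compare $\ell_J(z_1\tw_1 z_1\i)$ with an arbitrary $\ell_J$-minimal element $u$ of the $\tW_J$-conjugacy class by writing $u=z_2\tw_2 z_2\i$ with $z_2\in{}^J W_0$ and $\tw_2$ satisfying the hypothesis of Proposition \ref{fact'}. But nothing guarantees such a presentation exists: for a general $u\in\tW_J$ there need be no $z_2\in{}^J W_0$ for which $\bar C_0$ contains a regular point of $V_{z_2\i u z_2}=z_2\i V_u$ (the affine subspace $V_u$ can lie far from the union of the alcoves $z\bar C_0$, $z\in{}^J W_0$, inside $\bar C_J$), and producing the required presentation is essentially as hard as the proposition you are proving. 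The paper avoids this entirely with Proposition \ref{mininW}, which you never invoke: for an element whose $V$-subspace meets the fundamental alcove in a regular point, minimality in the full conjugacy class is \emph{equivalent} to minimality in the finite $W_{V_{\tw}}$-conjugacy class. Applying this once in $\tW$ and once in $\tW_J$ (note $\bar C_J$ contains a regular point of $V_{z\tw z\i}$ and $W_{V_{z\tw z\i}}=zW_{V_{\tw}}z\i\subset\tW_J$), and using that Proposition \ref{fact'} makes $\ell$ and $\ell_J$ differ by a constant on this finite conjugation orbit, one gets
$$\ell_J(z\tw z\i)=\min_{y\in W_{V_{z\tw z\i}}}\ell_J(yz\tw z\i y\i)=\min_{y\in\tW_J}\ell_J(yz\tw z\i y\i),$$
with no need to realize the competing $\ell_J$-minimal elements in any special form. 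Your argument needs this (or an equivalent) mechanism to close.
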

\begin{proof}
By \cite[Proposition 2.5 \& Lemma 2.7]{HN2}, there exists $\tw \to \tw' \in \co_{\min}$ such that $\bar C_0$ contains a regular point of $V_{\tw'}$. By Corollary \ref{cor4.4}, it suffices to consider the case that $\bar C_0$ contains a regular point of $V_{\tw}$. By Proposition \ref{mininW} and Proposition \ref{fact'}, $$\ell_J(z \tw z\i)=\min_{x \in W_{V_{\tw}}} \ell_J(z x \tw x\i z\i)=\min_{y \in W_{V_{z \tw z\i}}} \ell_J(y z \tw z\i y\i).$$ Note that $\bar C_J$ contains a regular point of $V_{z \tw z\i}$. Applying Proposition \ref{mininW} to $\ell_J$ and $z \tw z\i$ we obtain the desired result.
\end{proof}

\section{A family of partial conjugacy classes}

\subsection{} In this section, we consider an arbitrary Coxeter group $(W, S)$. Let $T=\cup_{w \in W} w S w \i \subset W$ be the set of reflections in $W$. Let $R=\{\pm 1\} \times T$. For $s \in S$, define $U_s: R \to R$ by $U_s(\e, t)=(\e (-1)^{\d_{s, t}}, s t s)$.

Let $\G$ be a subgroup of automorphisms of $W$ such that $\d(S)=S$ for all $\d \in \G$. Let $\tW=W \rtimes \G$. For any $\d \in \G$, define $U_\d: R \to R$ by $U_\d(\e, t)=(\e, \d(t))$. Then $U_\d U_s U_{\d \i}=U_{\d(s)}$ for $s \in S$ and $\d \in \G$.

We have the following result.

\begin{prop}
(1) There is a unique homomorphism $U$ of $\tW$ into the group of permutations of $R$ such that $U(s)=U_s$ for all $s \in S$ and $U(\d)=U_\d$ for all $\d \in \G$.

(2) For any $w \in \tW$ and $t \in T$, $t w<w$ if and only if for $\e=\pm 1$, $U(w \i)(\e, t)=(-\e, w \i t w)$.
\end{prop}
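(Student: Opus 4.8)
The plan is to prove the two assertions by essentially the same strategy that underlies the classical sign-cocycle construction for reflection length in Coxeter groups, adapted to the extension $\tW=W\rtimes\G$.

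\textbf{Part (1): existence and uniqueness of $U$.}
Uniqueness is immediate: $\tW$ is generated by $S$ together with $\G$, so a homomorphism into $\mathrm{Perm}(R)$ is determined by the images of these generators. For existence, the plan is to verify that the prescribed assignment respects a presentation of $\tW$. First I would check that each $U_s$ is an involution of $R$ (clear from $U_s^2(\e,t)=(\e(-1)^{2\d_{s,t}},stst s s)=(\e,t)$ since $s^2=1$), and that each $U_\d$ is a permutation with $U_\d U_{\d'}=U_{\d\d'}$ (immediate, since $\G$ acts on $T$). The essential point is the braid relations among the $U_s$: for $s,s'\in S$ with $(ss')^{m}=1$ one needs $(U_sU_{s'})^{m}=\mathrm{id}$. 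The second coordinate is handled because $t\mapsto sts$ is just the conjugation action and the $U_s$ in that coordinate realize the standard $W$-action on $T$, where the braid relations hold. For the sign coordinate one tracks how many times the ``flip'' $(-1)^{\d_{\cdot,\cdot}}$ is applied along the orbit; this is the classical verification (see e.g. the construction of the length homomorphism in Bourbaki or Humphreys) that the total number of sign flips in a braid word is even. Finally the relation $U_\d U_s U_{\d^{-1}}=U_{\d(s)}$ is checked directly on both coordinates, which shows the assignment is compatible with the semidirect-product relations $\d s\d^{-1}=\d(s)$. Assembling these, one gets a well-defined homomorphism $U$ on all of $\tW$.

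\textbf{Part (2): the reflection-below criterion.}
The plan is to reduce to the Coxeter case $w\in W$ by peeling off the $\G$-part, then induct on $\ell(w)$. Write $w=v\d$ with $v\in W$, $\d\in\G$; since $\d$ preserves $S$ it preserves length, preserves $T$, and $tw<w \iff tv<v$, while $U(w^{-1})=U(\d^{-1})U(v^{-1})$ and $U(\d^{-1})$ only twists the second coordinate by $\d^{-1}$ and fixes signs, so the statement for $w$ follows from the statement for $v$. Thus it suffices to treat $w\in W$. For $w\in W$ I would argue by induction on $\ell(w)$: the base case $w=1$ is vacuous (no $t$ with $tw<w$). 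For the inductive step write $w=sw'$ with $s\in S$ and $\ell(w')=\ell(w)-1$. Using $U(w^{-1})=U(w'^{-1})U_s$ and the inductive hypothesis for $w'$, one computes $U(w^{-1})(\e,t)$ by first applying $U_s$ — which sends $(\e,t)$ to $(\e(-1)^{\d_{s,t}},sts)$ — and then applying $U(w'^{-1})$. One then splits into the cases $t=s$ and $t\neq s$, and within the latter into whether $w'^{-1}(sts)<w'^{-1}$, translating everything back through the identity $tw<w\iff sts\cdot w'<w'$ (for $t\neq s$) and the standard fact that for $t=s$ one has $sw<w$ always. Matching up the sign that the induction produces with the $\pm\e$ in the claim finishes the step. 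This is the same bookkeeping as in the proof that $t\mapsto$ (parity of the number of reflection hyperplanes of $t$ separating $C_0$ from $wC_0$) computes $\ell(w)\bmod 2$, recast in terms of the permutation action.

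\textbf{Main obstacle.}
I expect the genuinely delicate part to be the sign-coordinate verification of the braid relations in Part (1): one must show that following $(\e,t)$ around an application of $(U_sU_{s'})^{m}$ returns the sign unchanged. The cleanest route is to observe that the orbit of $t$ under $\langle s,s'\rangle$ in $T$ is a dihedral configuration of reflections, and that the number of sign flips equals the number of reflections in $\langle s,s'\rangle$ that ``separate'' the relevant chambers — an even number after a full braid cycle. Alternatively, one can avoid a direct check by citing the classical existence of the sign character / length cocycle on $W$ and bootstrapping $U$ from it; but since the paper seems to want the explicit permutation model, I would carry out the dihedral computation. The $\G$-equivariance relation $U_\d U_sU_{\d^{-1}}=U_{\d(s)}$ is routine and poses no difficulty; likewise Part (2), once the reduction to $W$ is made, is a standard induction.
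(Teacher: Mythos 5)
Your plan is correct and matches the paper's: the paper simply cites \cite[Proposition 1.5 and Lemma 2.2]{L99} for the case $\G=\{1\}$ and remarks that the general case reduces easily to it, and your reduction (peeling off the $\G$-part via the identity $U_\d U_s U_{\d\i}=U_{\d(s)}$ and the fact that $U_\d$ fixes the sign coordinate) is exactly that reduction. The dihedral verification of the braid relations and the induction on $\ell(w)$ that you sketch for the case $\G=\{1\}$ are precisely the content of the classical results being cited, so you are reproving the citation rather than taking a different route.
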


The case $\G=\{1\}$ is in \cite[Proposition 1.5 \& Lemma 2.2]{L99}. The general case can be reduced to that case easily.

\subsection{} Let $J \subset S$. We consider the action of $W_J$ on $\tW$ by $w \cdot w'=w w' w \i$ for $w \in W_J$ and $w \in \tW$. Each orbit is called a {\it $W_J$-conjugacy class} or a {\it partial conjugacy class} of $\tW$ (with respect to $W_J$). We set $\G_J=\{\d \in \G; \d(J)=J\}$.

\begin{lem}\label{min-in-wj}
Let $I \subset S$ and $w \in W_I \rtimes \G_I$. Then $w$ is of minimal length in its $W_I$-conjugacy class if and only if $w$ is of minimal in its $W$-conjugacy class.
\end{lem}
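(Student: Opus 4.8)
The plan is to prove Lemma~\ref{min-in-wj} by reducing to the known minimal-length results for $W$-conjugacy classes (Theorem~\ref{min}) together with Proposition~\ref{mininW}. The ``only if'' direction is the substantial one: if $w \in W_I \rtimes \G_I$ is of minimal length in its $W_I$-conjugacy class, we must show there is no shorter element in the full $W$-conjugacy class $\co$ of $w$. The ``if'' direction is almost immediate, since $W_I$-conjugation is a special case of $W$-conjugation, so the minimal length over the $W$-class is a lower bound for the minimal length over the $W_I$-class; hence a $W$-minimal element that happens to lie in $W_I \rtimes \G_I$ is automatically $W_I$-minimal, and conversely one only has to rule out that the $W_I$-class could be ``trapped'' above the true minimum.

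First I would recall the geometric description underlying Proposition~\ref{mininW}: after replacing $w$ by a suitable element $w \to w'$ with $w' \in \co_{\min}$ whose fixed space $V_{w'}$ meets $\bar C_0$ in a regular point, the minimal length in the $W$-conjugacy class is computed by an expression depending only on the Newton point $\bar\nu_{w'}$ and a count of hyperplanes in $\fH_{V_{w'}}$; moreover Proposition~\ref{mininW} identifies this with the minimal length in the $W_{V_{w'}}$-conjugacy class. The key point is that $W_{V_{w'}}$ is generated by reflections $s_H$ fixing $V_{w'}$ pointwise, and when $w' \in W_I \rtimes \G_I$ one checks that every such $H$ lies in $\fH_I$ (using that $V_{w'}$ is large enough — it contains the barycentre region of the $I$-face — so the only hyperplanes through it come from $R_I$). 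Hence $W_{V_{w'}} \subset W_I$, and the minimal $W$-length is realized inside the $W_I$-conjugacy class. Since length in $\tW$ restricted to $W_I \rtimes \G_I$ agrees with the ambient length, this shows the $W_I$-minimum coincides with the $W$-minimum.

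Concretely, the steps in order are: (i) dispose of the ``if'' direction by the lower-bound remark above; (ii) for the ``only if'' direction, use Theorem~\ref{min}(1) and \cite[Proposition 2.5 \& Lemma 2.7]{HN2} to move $w$ within its $W_I$-conjugacy class (all such moves preserving membership in $W_I \rtimes \G_I$ and not increasing length) to an element $w'$ with $\bar C_0 \cap V_{w'}$ regular; (iii) apply Proposition~\ref{mininW} to see the $W$-minimal length equals the minimal length in the $W_{V_{w'}}$-conjugacy class; (iv) verify $W_{V_{w'}} \subseteq W_I$, so this minimal length is attained within $\co \cap (W_I \rtimes \G_I)$; (v) conclude that a $W_I$-minimal element of $\co$ already has the $W$-minimal length.

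The main obstacle I anticipate is step~(iv): showing that all reflection hyperplanes through $V_{w'}$ are of type $I$, i.e. $\fH_{V_{w'}} \subseteq \fH_I$. This requires knowing that $V_{w'}$ is not ``too small'' — for $w' \in W_I \rtimes \G_I$ one expects $V_{w'}$ to contain a translate of the full fixed subspace of the $I$-part, which is cut out precisely by the $R_I$-hyperplanes, so any hyperplane in $\fH$ containing $V_{w'}$ must come from $R_I$. Making this rigorous may need the explicit description of $V_{w'}$ via the projection $p$ (as in \S\ref{min-tw}), and care that $\d \in \G_I$ does not enlarge the relevant reflection group beyond $W_I$. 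A secondary technical point is ensuring the chain of moves in step~(ii) can be chosen to stay inside $W_I \rtimes \G_I$; this should follow from the fact that $\ell$ and $\ell_I$ differ by a constant on the $W_I$-conjugacy class once $\bar C_0$ (equivalently $\bar C_I$) contains a regular point of the fixed space, combined with Corollary~\ref{cor4.4}.
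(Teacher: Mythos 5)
Your ``if'' direction is fine and coincides with the paper's (it is the trivial inclusion of the $W_I$-class into the $W$-class). The ``only if'' direction, however, is built on machinery that is not available here and has two concrete gaps. First, Lemma~\ref{min-in-wj} is stated in the section that works with an \emph{arbitrary} Coxeter group $(W,S)$ and arbitrary $I\subset S$, and it is later applied to $\Ad(w_2)$-twisted conjugation inside finite Weyl groups (in the proof of Proposition~\ref{hh}); the objects you invoke --- Newton points, the affine subspaces $V_{\tw}$, regular points of $\bar C_0$, Proposition~\ref{mininW} --- are defined only for the extended affine Weyl group, so the argument does not even parse in the generality required. Second, even in the affine setting your steps (ii) and (iv) do not go through as stated: the reduction to ``$\bar C_0$ contains a regular point of $V_{w'}$'' via Theorem~\ref{min}(1) and \cite[Proposition 2.5 \& Lemma 2.7]{HN2} proceeds by conjugating by simple reflections $s\in S$ that need not lie in $I$, so the chain need not stay inside $W_I\rtimes\G_I$, need not stay inside the $W_I$-conjugacy class, and need not preserve $W_I$-minimality (Corollary~\ref{cor4.4} addresses a different situation and does not repair this); and the inclusion $W_{V_{w'}}\subseteq W_I$ is asserted on the heuristic that $V_{w'}$ is ``large enough'', but $V_{w'}$ is the fixed set of a single element, not of $W_I$, and $\fH_I$ is not even defined for general $I\subset S$.

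The paper's proof is a short, purely combinatorial computation that avoids all of this. Given $x\in W$, write $x=x_1y$ with $x_1\in W^I$ and $y\in W_I$, so that $xwx\i=x_1(ywy\i)x_1\i$ with $ywy\i\in W_I\rtimes\G_I$ lying in the $W_I$-conjugacy class of $w$. Since $x_1\in W^I$, one has $\ell(x_1u)=\ell(x_1)+\ell(u)$ for every $u\in W_I\rtimes\G_I$, and therefore
$$\ell(xwx\i)\ \ge\ \ell\bigl(x_1(ywy\i)\bigr)-\ell(x_1)\ =\ \ell(ywy\i)\ \ge\ \ell(w),$$
the last inequality by $W_I$-minimality of $w$. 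This works verbatim for any Coxeter group and any $I\subset S$. I would recommend abandoning the geometric route here: it is both harder and insufficiently general for the way the lemma is used.
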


\begin{proof}
The ``if'' part is trivial.

Now we show the ``only if'' part. Suppose that $w$ is a minimal length element in its $W_I$-conjugacy class. An element in the $W$-conjugacy class of $w$ is of the form $x w x\i$ for some $x \in W$. Write $x=x_1 y$, where $x_1 \in W^I$ and $y \in W_I$. Then $x w x \i=x_1 (y w y \i) x_1 \i$. Here $y w y \i \in W_I$ is in the $W_I$-conjugacy class of $w$. Hence $\ell(y w y \i) \ge \ell(y)$. Now \begin{align*} \ell(x w x \i) & \ge \ell(x_1 (y w y \i))-\ell(x_1)=\ell(x_1)+\ell(y w y \i)-\ell(x_1) \\ &=\ell(y w y \i) \ge \ell(w).\end{align*} Thus  $w$ is a minimal length element in its $W$-conjugacy class.
\end{proof}

\subsection{} In general, a $W_J$-conjugacy class in $\tW$ may not contain any element in $W_J \rtimes \G_J$. To study the minimal length elements in this partial conjugacy class, we introduce the notation $I(J, w)$.

For any $w \in {}^J \tW$, set $$I(J, w)=\max\{K \subset J; w K w\i=K\}.$$ Since $w(K_1 \cup K_2)w\i=w K_1 w\i \cup w K_2 w\i$, $I(J, w)$ is well-defined. We have that

(a)  $I(J, w)=\cap_{i \ge 0} w^{-i} J w^i$.

Set $K=\cap_{i \ge 0} w^{-i} J w^i$. Let $s \in I(J, w)$. Then $w^i s w^{-i} \in I(J, w) \subset J$ for all $i$. Thus $s \in K$. On the other hand, $w K w\i \subset K$. Since $K$ is a finite set, $w K w\i=K$. Thus $K \subset I(J, w)$.

(a) is proved.

\begin{lem}\label{infty}
Let $w \in {}^J \tW$ and $x \in W_J$. Then $x \in W_{I(J, w)}$ if and only if $w^{-i} x w^i \in W_J$ for all $i \in \ZZ$.
\end{lem}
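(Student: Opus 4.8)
The plan is to prove both implications, with the nontrivial direction being the ``only if'' part.

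\medskip

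\textbf{The easy direction.} Suppose $x \in W_{I(J,w)}$. By (a) we have $I(J,w) = \cap_{i \ge 0} w^{-i} J w^i$, and since $w I(J,w) w\i = I(J,w)$ we also get $I(J,w) = w^{-i}\,I(J,w)\, w^i \subset w^{-i} J w^i$ for \emph{all} $i \in \ZZ$, not just $i \ge 0$. Hence $w^{-i} x w^i \in w^{-i} W_{I(J,w)} w^i = W_{w^{-i} I(J,w) w^i} = W_{I(J,w)} \subset W_J$ for every $i \in \ZZ$.

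\medskip

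\textbf{The main direction.} Assume $w^{-i} x w^i \in W_J$ for all $i \in \ZZ$; I want to show $x \in W_{I(J,w)}$. The idea is to track the support of the conjugates. For $v \in W_J$, write $\supp_J(v) \subset J$ for the set of simple reflections appearing in some (equivalently, any) reduced expression of $v$ in the Coxeter group $W_J$; this is the smallest $K \subset J$ with $v \in W_K$. The key observation is that conjugation by $w$ cannot increase length indefinitely: consider the sequence $x, w\i x w, w^{-2} x w^2, \dots$, all lying in $W_J$ by hypothesis. Since $W_J$-lengths are nonnegative integers, this sequence cannot strictly decrease forever, so there is some $i_0$ with $\ell(w^{-(i_0+1)} x w^{i_0+1}) \ge \ell(w^{-i_0} x w^{i_0})$; replacing $x$ by $w^{-i_0} x w^{i_0}$ (which has the same property and the same support-orbit behavior), I may assume $\ell(w\i x w) \ge \ell(x)$. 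I would then argue, using $w \in {}^J\tW$, that conjugation by such a $w$ sends $W_J$ into $\tW$ in a length-additive way on the relevant pieces, forcing $\supp_J(w\i x w) \subset w\i (\supp_J(x)) w \cap J$ together with a length comparison that makes this an equality; iterating gives $\supp_J(x) \subset w^{-i} J w^i$ for all $i \ge 0$, hence $\supp_J(x) \subset I(J,w)$ by (a), i.e. $x \in W_{I(J,w)}$.

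\medskip

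\textbf{Where the difficulty lies.} The main obstacle is making precise the length-bookkeeping in the step ``$\ell(w\i x w) \ge \ell(x)$ forces $w\i \supp_J(x)\, w \subset J$''. Since $w \in {}^J\tW$, for $v \in W_J$ one has $\ell(vw) = \ell(v) + \ell(w)$, but $\ell(w\i v w)$ is not simply controlled; the right tool is the reflection cocycle $U$ from the previous Proposition, which tells us exactly which reflections $t$ satisfy $tw < w$. Concretely, I would express $x$ as a product of reflections from $W_J$ and analyze, via $U(w\i)$, how each is transported; the condition that no length is lost upon conjugating by $w$ should be equivalent to each reflection in the support being fixed-setwise by the $w$-action, i.e. landing back in $J$. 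Alternatively — and this is probably cleaner — one can avoid $U$ entirely: use that $w^{-i} x w^i \in W_J$ for \emph{all} $i \in \ZZ$ (both signs), pick $i$ minimizing $\ell(w^{-i} x w^i)$ over a period (the orbit of $J$ under $w$-conjugation is finite since there are finitely many subsets of $S$, so the relevant structure is eventually periodic), and at a minimum deduce that conjugation by $w^{\pm 1}$ preserves the support; then the support is a subset $K \subset J$ with $wKw\i = K$, so $K \subset I(J,w)$ by maximality, giving $x \in W_K \subset W_{I(J,w)}$. I expect the cleanest writeup combines the periodicity of the $w$-action on subsets of $J$ with the elementary fact that a $W_J$-conjugation orbit of a fixed element $x$ realized inside $W_J$ has a minimal-length member whose support is conjugation-stable.
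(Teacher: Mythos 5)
Your ``if'' direction is fine and agrees with the paper's. The ``only if'' direction, however, has a genuine gap: everything hinges on the step ``no length is lost upon conjugating by $w$, hence $\supp(w\i xw)\subset w\i\supp(x)w\cap J$ with equality'', and this step is never proved --- you only say you \emph{would} argue it via the reflection cocycle $U$, or alternatively via periodicity, and neither sketch is carried out. Worse, the surrounding claims are unreliable: $w\i\supp(x)w$ is a set of generally non-simple reflections, so the asserted inclusion is not a statement with an evident proof; and the conclusion of your second route, that the support $K$ of a length-minimal conjugate satisfies $wKw\i=K$, is false as stated (conjugation by $w$ may permute $I(J,w)$ nontrivially, so for $x'$ a single simple reflection in $I(J,w)$ moved by $w$ one has $\supp(wx'w\i)\neq\supp(x')$ even though $x'\in W_{I(J,w)}$). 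The statement you actually need is the containment $\supp(x)\subset I(J,w)$, not stability of the support, and also note that ``the orbit of $J$ under $w$-conjugation'' does not live among subsets of $S$, so the finiteness/periodicity appeal needs care when $W_J$ is infinite.

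The paper's proof shows that no minimization of length over the conjugation orbit is needed at all: the single hypothesis $wxw\i\in W_J$ (with $w\in{}^J\tW$ and $x\in W_J$) already forces $x\in W_{J\cap w\i Jw}$. One writes $x=ux_1$ with $u\in W_{J\cap w\i Jw}$ and $x_1\in{}^{J\cap w\i Jw}W\cap W_J$; by \cite[2.1(a)]{L4} one has $wx_1\in{}^JW$, while $wx=(wuw\i)(wx_1)\in W_J(wx_1)$ and also $wx\in W_Jw$, so uniqueness of minimal coset representatives gives $wx_1=w$, hence $x_1=1$ and $x\in W_{J\cap w\i Jw}$; iterating and using (a) finishes the argument. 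If you wish to keep your length-based strategy you must actually supply a proof of the support-transport claim, which is essentially equivalent to this double-coset argument; otherwise adopt the coset route directly.
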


\begin{proof}
If $x \in W_{I(J, w)}$, then $w \i x w \in W_{w\i(I(J, w))}=W_{I(J, w)}$. So $w^{-i} x w^i \in W_J$ for all $i \in \ZZ$.

Suppose that $w^{-i} x w^i \in W_J$ for all $i \in \ZZ$. We write $x$ as $x=u x_1$, where $u \in W_{J \cap w \i J w}$ and $x_1 \in {}^{J \cap w \i J w} W$. By \cite[2.1(a)]{L4}, $w x_1 \in {}^J W$ and $w x \in W_J (w x_1)$. Since $w x w \i \in W_J$, $w x \in W_J w$. Therefore $W_J (w x_1) \cap W_J w \neq \emptyset$ and $w x_1=w$. So $x_1=1$ and $x \in W_{J \cap w \i J w}$.

Applying the same argument, $x \in W_{\cap_{i \ge 0} w^{-i} J w^i}=W_{I(J, w)}$.
\end{proof}

\subsection{} Similar to $\S$\ref{to}, for $w, w' \in \tW$, we write $w \to_J w'$ if there is a sequence $w=w_0, w_1, \cdots, w_n=w'$ of elements in $\tW$ such that for any $k$, $w_{k-1} \xrightarrow{s} w_k$ for some $s \in J$. The notations $\sim_J$ and $\approx_J$ are defined in a similar way.

The following result is proved in \cite{H1}.

\begin{thm}\label{partial}
Let $\co$ be a $W_J$-conjugacy class of $\tW$. Then there exists a unique element $\tw \in {}^J \tW$ and a $W_{I(J, \tw)}$-conjugacy class $C$ of $W_{I(J, \tw)} \tw$ such that $\co \cap W_{I(J, \tw)} \tw=C$. In this case,

(1) for any $\tw' \in \co$, there exists $x \in W_{I(J, \tw)}$ such that $\tw' \to_J x \tw$.

(2) for any two minimal length elements $x, x'$ of $C$, $x \sim_{I(J, \tw)} x'$.
\end{thm}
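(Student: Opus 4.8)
The plan is to split the statement into three parts: first, the existence of some $\tw\in{}^J\tW$ with $\co\cap W_{I(J,\tw)}\tw\neq\emptyset$, together with assertion (1); second, the uniqueness of $\tw$ and the fact that $C:=\co\cap W_{I(J,\tw)}\tw$ is a single $W_{I(J,\tw)}$-conjugacy class of the coset $W_{I(J,\tw)}\tw$; third, assertion (2). The engine throughout is the ``partial conjugation'' reduction, the only external input being the classification of minimal length elements in a (twisted) conjugacy class of a Coxeter group.

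\emph{Existence and (1).} Given $w'\in\co$, write $w'=uw$ with $u\in W_J$, $w\in{}^J\tW$ and $\ell(w')=\ell(u)+\ell(w)$. If $u\in W_{I(J,w)}$ we are at the ``stable locus'' $W_{I(J,w)}w$ and stop. Otherwise Lemma~\ref{infty} provides an $i\ge1$ with $w^{-i}uw^i\notin W_J$; choosing $i$ minimal, the element $w^{-(i-1)}uw^{i-1}$ lies in $W_J$ but not in the parabolic subgroup $W_{J\cap w\i Jw}$, and from this one extracts a simple reflection $s\in J$ for which the move $w'\xrightarrow{s}sw's$ is length-nonincreasing and makes genuine progress. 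Indeed, rewriting $sw's=(su)(ws)$ and putting $ws$ in normal form relative to ${}^J\tW$ (by Deodhar's lemma, either $ws\in{}^J\tW$ or $ws=tw$ for a unique $t\in J$), and comparing lengths with the help of the operator $U$ introduced above, one checks that the move either strictly decreases $\ell$, or fixes $\ell$ and strictly shortens the ${}^J\tW$-component, or fixes both $\ell$ and the ${}^J\tW$-component and strictly shortens the $W_J$-component. Since $W_J$ has only finitely many elements of any given length, such moves cannot continue indefinitely without reaching the stable locus, so $w'\to_J x\tw$ for some $\tw\in{}^J\tW$ and $x\in W_{I(J,\tw)}$. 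Applying this to each $w'\in\co$ shows that every element of $\co$ is $\to_J$-below an element of $W_{I(J,\tw)}\tw$ for some $\tw\in{}^J\tW$; once the next part shows that all these $\tw$ coincide, assertion (1) follows.

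\emph{Uniqueness, structure of $C$, and assertion (2).} Because $\tw I(J,\tw)\tw\i=I(J,\tw)$, the element $\tw$ normalizes $W_{I(J,\tw)}$, so $W_{I(J,\tw)}\tw$ is stable under $W_{I(J,\tw)}$-conjugation; moreover, if two of its elements are $W_J$-conjugate, then applying the reduction above to a conjugating element shows they are already $W_{I(J,\tw)}$-conjugate, so $C$ is a single $W_{I(J,\tw)}$-conjugacy class. For uniqueness: if $\tw_1,\tw_2\in{}^J\tW$ both satisfy $\co\cap W_{I(J,\tw_k)}\tw_k\neq\emptyset$, tracking the ${}^J\tW$-component of an element of $\co\cap W_{I(J,\tw_1)}\tw_1$ under a $W_J$-conjugation carrying it into $\co\cap W_{I(J,\tw_2)}\tw_2$, and using that both endpoints lie in their stable loci, forces $\tw_1=\tw_2$ (this is where the maximality in the definition of $I(J,\cdot)$ is genuinely used). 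Finally, conjugation by $\tw$ is an automorphism $\sigma$ of the Coxeter system $(W_{I(J,\tw)},I(J,\tw))$, and the map $v\mapsto v\tw$ identifies $\sigma$-twisted conjugation on $W_{I(J,\tw)}$ with $W_{I(J,\tw)}$-conjugation on $W_{I(J,\tw)}\tw$, carrying $\ell_{I(J,\tw)}$ to $\ell_{I(J,\tw)}(\,\cdot\,\tw)$; under this identification $C$ becomes a single, and (by maximality of $I(J,\tw)$) elliptic, $\sigma$-twisted conjugacy class, and assertion (2) is the statement that any two of its minimal length elements are related by $\sim_{I(J,\tw)}$ — the twisted analogue of Theorem~\ref{min}(2) for $W_{I(J,\tw)}$, which reduces to the Geck--Pfeiffer theorem when $W_{I(J,\tw)}$ is finite.

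The step I expect to be the main obstacle is the inductive move in the existence part: producing a simple reflection $s\in J$ that does not lengthen $w'$ and provably reduces the complexity, and arranging the moves so that the process terminates at the stable locus. This is exactly where Lemma~\ref{infty}, Deodhar's lemma, and the operator $U$ must be used in concert; together with the uniqueness argument it is the technical core of the proof, carried out in \cite{H1}.
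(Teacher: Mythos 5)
The first thing to note is that the paper offers no proof of this statement: Theorem \ref{partial} is quoted from \cite{H1} with the single sentence ``The following result is proved in \cite{H1}.'' Your overall architecture --- push any element of $\co$ into the stable coset $W_{I(J,\tw)}\tw$ by moves $\xrightarrow{s}$ with $s\in J$, identify $W_{I(J,\tw)}$-conjugation on that coset with $\Ad(\tw)$-twisted conjugation on the Coxeter group $W_{I(J,\tw)}$, and invoke a Geck--Pfeiffer/He--Nie type theorem for (2) --- is indeed the partial-conjugation argument of \cite{H1}, and deferring its technical core to that reference is exactly what the paper does.

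However, two assertions in your sketch are not merely imprecise but false, and the first one blocks the induction as you set it up. (i) Since $\tw\in{}^J\tW$ forces $\ell(u\tw)=\ell(u)+\ell(\tw)$ for every $u\in W_J$, your third alternative (``fixes both $\ell$ and the ${}^J\tW$-component and strictly shortens the $W_J$-component'') can never occur; worse, the trichotomy itself fails already in type $A_2$: take $J=\{s_1\}$ and $w'=s_1s_2=s_1\cdot s_2$ with $s_2\in{}^JW$. The only available move is conjugation by $s_1$, which gives $s_2s_1=1\cdot(s_2s_1)$: the length is preserved and the ${}^JW$-component goes from $s_2$ to the \emph{longer} element $s_2s_1$ (the $W_J$-component drops from $s_1$ to $1$). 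None of your three alternatives occurs, yet $s_1\notin W_{I(J,s_2)}=\{1\}$, so the process is not permitted to stop. Whatever monotone quantity makes the reduction terminate (see \cite[Section 2]{H1}), it is not the one you wrote down. (ii) The parenthetical claim that $C$ corresponds to an \emph{elliptic} twisted class ``by maximality of $I(J,\tw)$'' is false: for $\tw=1$ one has $I(J,1)=J$ and $C$ is an arbitrary conjugacy class of $W_J$. Maximality buys you only $\tw I(J,\tw)\tw^{-1}=I(J,\tw)$; fortunately (2) needs no ellipticity, only the minimal-length theorem for twisted classes of $W_{I(J,\tw)}$. A smaller slip: Lemma \ref{infty} guarantees a failing exponent $i$ somewhere in $\ZZ$, not necessarily $i\ge1$, so the ``minimal $i$'' step must be set up with the correct direction of conjugation.
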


Now we prove the following result.

\begin{thm}\label{fact}
Let $I \subset J \subset S$ and let $w, u \in \tW$ be of minimal length in the same $W_J$-conjugacy class such that $u \in {}^J \tW$, $w \in {}^I \tW$ and $w I w\i=I$. Then there exists $h \in {}^{I(J,u)} W_J {}^I$ such that $h I h\i \subset I(J,u)$ and $h w h\i=u$.
\end{thm}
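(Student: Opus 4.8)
The plan is to use the partial‑conjugation machinery of Theorem~\ref{partial} to put $u$ in canonical form and then to transport $w$ to $u$ along a reduction path that carries the standard parabolic $W_I$ with it. First observe that $w\in{}^I\tW$ together with $wIw\i=I$ forces $w\in\tW^I$ as well: for $s\in I$ we have $wsw\i=s'\in I$, hence $ws=s'w$ and $\ell(ws)=\ell(s'w)=\ell(w)+1$; thus $w\in{}^I\tW^I$ and $W_I$ is normalised by $w$. Write $\co$ for the common $W_J$-conjugacy class of $w$ and $u$, and set $K=I(J,u)$. Since $u\in{}^J\tW$ has minimal length in $\co$, Theorem~\ref{partial} identifies $u$ with the distinguished element of $\co$; in particular $uKu\i=K$, and $C:=\co\cap W_Ku$ is a single $W_K$-conjugacy class, namely the $W_K$-conjugacy class of $u$.

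The crucial elementary point is that, for \emph{any} $h\in W_J$ with $hwh\i=u$, the set $hIh\i$ is stable under conjugation by $u$, since $u(hIh\i)u\i=h(wIw\i)h\i=hIh\i$. Consequently, once such an $h$ also satisfies $hIh\i\subseteq S$, we automatically get $hIh\i\subseteq S\cap W_J=J$ and hence, by the maximality of $K=I(J,u)$, $hIh\i\subseteq K$. So the problem is reduced to producing a conjugator $h\in W_J$ carrying $w$ to $u$ which is reduced on both sides, i.e.\ $h\in{}^K\tW\cap\tW^I$, and which keeps $W_I$ standard.

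For this one uses that $w$ too is of minimal length in $\co$. By Theorem~\ref{partial}(1), $w$ reduces to an element $w_1=xu\in C$ (with $x\in W_K$) through a chain of length‑preserving moves $v\xrightarrow{s}svs$ with $s\in J$, and by Theorem~\ref{partial}(2) the minimal elements $w_1$ and $u$ of $C$ are connected inside $W_K$. The plan is to follow this path from $w$ to $u$ while dragging along the conjugate of the simple system $I$: it starts $w$-stable and standard and, using the hypotheses $w\in{}^I\tW^I$, $wIw\i=I$ and the minimality of $\ell(w)=\ell(u)$ in $\co$ (so that no conjugation inside $W_J$ can shorten either element), it remains standard (a subset of $S$) at every step, with the conjugating element staying reduced; the composite then yields the required $h$. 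The argument runs by induction on $|J|-|I|$: the base case $J=I$ is immediate, since then $\co\subseteq W_Iw$ (as $W_I$ is $w$-normalised), so $u\in W_Iu=W_Iw$, and both $u$ and $w$ are the minimal‑length element of this coset, whence $u=w$ and $h=1$. The main obstacle is the inductive step — peeling off one simple reflection of $J\setminus I$ via Theorem~\ref{partial} and verifying that the image of $I$ stays inside $S$ (and that $hwh\i=u$ is preserved throughout); this is precisely where $wIw\i=I$, without which $W_I$ could be conjugated onto a non‑standard parabolic, and the minimality of $w$ and $u$, are essential.
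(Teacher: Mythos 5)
Your opening reductions are fine (that $w\in{}^I\tW$ with $wIw\i=I$ forces $w\in{}^I\tW^I$, the base case $J=I$, and the observation that for any $h\in W_J$ with $hwh\i=u$ the set $hIh\i$ is automatically $u$-stable, so that $hIh\i\subset S$ would already give $hIh\i\subset I(J,u)$ by maximality). But the core of the theorem --- actually producing a doubly reduced $h\in{}^{I(J,u)}W_J{}^I$ with $hwh\i=u$ and $hIh\i\subset S$ --- is left as a ``plan'' whose ``main obstacle'' you explicitly do not resolve, and the mechanism you propose does not work as stated. Following the chain of elementary conjugations $v\xrightarrow{s}svs$ from Theorem~\ref{partial} while ``dragging along'' the simple system $I$ fails at individual steps: conjugation by a single $s\in J$ generally sends a subset of $S$ to a set of non-simple reflections, so the transported copy of $I$ does \emph{not} ``remain standard at every step,'' and you give no way to recover standardness at the end. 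Likewise, the proposed induction on $|J|-|I|$ has no visible inductive structure: there is no intermediate standard parabolic $I\subset J'\subsetneq J$ for which the hypotheses (common $W_{J'}$-conjugacy class, minimality of both elements, $u\in{}^{J'}\tW$) can be arranged. So this is a genuine gap, not a stylistic difference.

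For comparison, the paper's proof does not follow a conjugation path at all. It starts from an \emph{arbitrary} conjugator $b\in W_J$ with $bwb\i=u$, decomposes it as $b=f\e$ with $f\in W_J{}^I$ and $\e\in W_I$, and uses the reflection cocycle $U$ (the proposition quoted from \cite{L99}) to show that if $x=\e w\e\i w\i\neq 1$ one can strictly decrease $fxwf\i$ step by step, contradicting the minimality of $u$; hence $fwf\i=u$. It then writes $f=\pi h$ with $\pi\in W_{I(J,u)}$ and $h\in{}^{I(J,u)}W_J{}^I$ and runs the same length argument against the minimality of $w$ to discard $\pi$, getting $hwh\i=u$ with $h$ already doubly reduced. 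Finally, $u^i(hxh\i)u^{-i}=h(w^ixw^{-i})h\i\in hW_Ih\i\subset W_J$ for all $i$, so Lemma~\ref{infty} gives $hW_Ih\i\subset W_{I(J,u)}$, and double reducedness of $h$ yields $hIh\i\subset I(J,u)$. If you want to salvage your approach, the piece you are missing is precisely this cocycle/length argument showing that the $W_I$-part and the $W_{I(J,u)}$-part of a conjugator can be stripped off without changing the conjugation it effects.
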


\begin{rmk}
This result can be interpreted as a conjugation on a family of partial conjugacy classes in the following sense. Let $\ci_1$ be the set of $W_I$-conjugacy classes that intersects $W_I w$ and $\ci_2$ be the set of $W_J$-conjugacy classes that intersects $W_{I(J, u)} u$. Then

(1) There is an injective map $\ci_1 \to \ci_2$ which sends a $W_I$-conjugacy class $\co_1$ in $\ci_1$ to the unique $W_J$-conjugacy class $\co_2$ in $\ci_2$ that contains $\co_1$.

(2) Conjugating by $h$ sends $\co_1 \cap W_I w$ into $\co_2 \cap W_{I(J, u)} u$.
\end{rmk}

\begin{proof} Let $b=f \e \in W_J$ with $f \in  W_J {}^I$ and $\e \in W_I$ such that $b w b\i=u$. We show that

(a) $f w f\i=u$.

Set $x=\e w\e\i w \i$. Then $x \in W_I$ and $u=f x w f\i$.  Suppose that $x \neq 1$. Then $s x<x$ for some $s \in I$. Set $t=f s f \i$. Since $f \in W^I$, $t f=f s>f$. Thus $U(f \i)(\e, t)=(\e, f \i t f)=(\e, s)$. Since $s x<x$, $U(x\i)(\e, s)=(-\e, x \i s x)$. Notice that $w \in {}^I W$ with $w I w\i=I$ and $f \i \in {}^I W$. Thus $w f \i \in {}^I W$. Hence $U(f w \i)(-\e, x \i s x)=(-\e, f w \i x \i s x w f \i)$.

Therefore $U(f w\i x \i f \i)(\e, t)=(-\e, f w \i x \i s x w f \i)$ and $$t f x w f \i=f (s x) w f \i<f x w f \i=u.$$

Applying this argument successively, we have $f w f\i<u.$ This contradicts our assumption that $u$ is of minimal length in the $W_K$-conjugacy class containing $f w f\i$. Hence $x=1$ and $f w f\i=u$.

(a) is proved.


Now we write $f$ as $f=\pi h$ with $\pi \in W_{I(J,u)}$ and $h \in {}^{I(J,u)} W_J {}^I$. Then $w=h \i (\pi \i u \pi u \i) u h$. Similar to the proof of (a), we have that $w \ge h \i u h$. By our assumption, $w$ is a minimal length element in the $W_J$-conjugacy class of $h \i u h$. Thus $w=h \i u h$.

For any $x \in W_I$ and $i \in \ZZ$, $h w^i=u^i h$ and $$u^i (h x h \i) u^{-i}=(h w^i) x (h w^i) \i=h (w^i x w^{-i}) h \i \in h W_I h \i \subset W_J.$$ By Lemma \ref{infty}, $h x h \i \in W_{I(J, u)}$. Thus $h W_I h \i \subset W_{I(J, u)}$. Since $h \in {}^{I(J,u)} W_J {}^I$, we have $h I h\i \subset I(J, u)$.
\end{proof}

\section{Bernstein presentation of the cocenter of $\ch$}

\subsection{}\label{def-tw0}
We fix a conjugacy class $\co$ of $\tW$ and will construct a subset $J$ of $S_0$, as small as possible, such that $T_{w_\co} \in \ch_J+[\ch, \ch]$.

By \cite[Proposition 2.5 \& Lemma 2.7]{HN2}, there exists $\tw' \in \co_{\min}$ such that $\bar C_0$ contains a regular point $e'$ of $V_{\tw'}$. We choose $v \in V$ such that $V_{\tw'}=V_{\tw'}+v$ and $v, \nu_{\tw'} \in \bar C$ for some Weyl Chamber $C$. We write $J$ for $J_{\nu_\co} \cap J_{\bar v}$. Let $z \in {}^J W_0$ with $z(\nu_{\tw'})=\nu_\co$ and $z(v)=\bar v$. Set $\tw_0=z \tw' z \i$. By Proposition \ref{mm}, $\tw_0$ is of minimal length (with respect to $\ell_J$) in its $\tW_J$-conjugacy class.

Unless otherwise stated, we keep the notations in the rest of this section. The main result of this section is

\begin{thm}\label{thm4.1}
We keep the notations in $\S$\ref{def-tw0}. Then
$$T_{w_\co} \equiv T^{J}_{\tw_0} \mod [\ch, \ch].$$
\end{thm}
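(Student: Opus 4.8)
The plan is to reduce the theorem to the structure of minimal-length elements established in Sections 3--5, by a careful bookkeeping of the conjugation $z$ together with the Bernstein relations \ref{th}(5)--(6). First I would recall from $\S$\ref{def-tw0} the data attached to $\co$: the element $\tw' \in \co_{\min}$ whose subspace $V_{\tw'}$ has a regular point in $\bar C_0$, the vector $v$ with $V_{\tw'} = V^{p(\tw')} + v$ and $v,\nu_{\tw'} \in \bar C$, the subset $J = J_{\nu_\co} \cap J_{\bar v} \subset S_0$, the element $z \in {}^J W_0$ with $z(\nu_{\tw'}) = \nu_\co$, $z(v) = \bar v$, and $\tw_0 = z\tw' z\i \in \tW_J$. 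By Proposition \ref{mm}, $\tw_0$ is of minimal length in its $\tW_J$-conjugacy class, so $T^J_{\tw_0}$ is a well-defined representative of a Bernstein-type generator of $\bar\ch_J$, hence of $\bar\ch$ via $\ch_J \subset \ch$. Since $\tw' \tilde\sim w_\co$, we have $T_{w_\co} \equiv T_{\tw'} \mod [\ch,\ch]$, so it suffices to prove $T_{\tw'} \equiv T^J_{\tw_0} \mod[\ch,\ch]$.

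The heart of the argument is to realize the conjugation $\tw' \mapsto z\tw' z\i$ at the level of Hecke algebra elements. I would factor $z$ into simple reflections $z = s_{i_1}\cdots s_{i_r}$ with each partial product in ${}^J W_0$ (a reduced expression adapted to the coset, using that $z \in {}^J W_0$), and conjugate one simple reflection at a time. At each step one passes from $\tw$ to $s\tw s$ where $s = s_\a$, $\a \in F_0$; the key point, guaranteed by Proposition \ref{fact'} (in the refined form $J \subset J_{\nu_{z\tw z\i}}$, which holds because $z$ was chosen precisely so that $\nu_{\tw_0} = \nu_\co$ is $J$-dominant and $J \subset J_{\nu_\co}$), is that the length drop $\ell(\tw) - \ell_J(\tw_0) = \<\bar\nu_\co, 2\rho^\vee\>$ is accounted for entirely by translation lengths in directions orthogonal to $J$. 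Concretely, at each conjugation step either $\ell(s\tw s) = \ell(s\tw) + 1 = \ell(\tw s) + 1$, in which case $T_s T_{\tw} T_s\i = T_{s\tw s}$ up to a commutator (the standard trick $T_x T_y \equiv T_y T_x$ when lengths add), or the lengths behave so that one is in the situation $\<\chi, \a^\vee\> \in \{0,1\}$ for the translation part, where Lemma \ref{th}(5)--(6) apply and again produce the expected identity modulo $[\ch,\ch]$ with no extra terms. Summing over the steps, $T_{\tw'} \equiv T^{J}_{\tw_0}$, where on the right one must check that the element produced in $\ch$ genuinely lies in $\ch_J$ and equals the Iwahori--Matsumoto element $T^J_{\tw_0}$ of the parabolic subalgebra; this is where the identification of parameter functions from $\S$\ref{ptj} and Lemma \ref{zs}(1) (which says reflections surviving into $\tW_J$ keep length one) are used.

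The main obstacle I anticipate is precisely the bookkeeping in the inductive conjugation: controlling, at each of the $r$ steps, whether one is in the ``length-additive'' case or the ``Bernstein relation'' case, and verifying that the error terms from relation \ref{th}(4) always vanish modulo $[\ch,\ch]$ because the relevant pairings $\<\chi,\a^\vee\>$ never exceed the values $0$ or $1$ handled by (5) and (6). This non-negativity/boundedness is not automatic from $z \in {}^J W_0$ alone; it relies on the specific geometry — that $\tw'$ already has a regular point of $V_{\tw'}$ in $\bar C_0$ and that $z$ moves $v$ to the dominant chamber — so that conjugating by $z$ only ever ``straightens'' the translation part without overshooting. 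A secondary, more routine difficulty is checking that the final element lies in $\ch_J$ rather than merely in $\ch_J + [\ch,\ch]$ with a genuine correction; this should follow because every intermediate $z_k \tw' z_k\i$ for a partial product $z_k$, while not in $\tW_J$, has the property from Lemma \ref{zs}(2) that the extra simple reflection contributes only a length-zero element, so no genuine obstruction to landing in $\ch_J$ accumulates.
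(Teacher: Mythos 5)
Your proposal has a genuine gap, and it sits exactly where the paper warns the difficulty lies. You propose to realize the conjugation $\tw'\mapsto z\tw'z\i$ directly in the Hecke algebra, one simple reflection of $z$ at a time, and to absorb each step either into the identity $T_{s\tw s}\equiv T_{\tw}$ (when lengths match) or into the Bernstein relations \S\ref{th}(5)--(6). But even if every step could be controlled, this process would terminate at $T_{\tw_0}$, the Iwahori--Matsumoto element of $\ch$ attached to $\tw_0$ via the length function $\ell$ --- not at $T^J_{\tw_0}$, the Iwahori--Matsumoto element of the parabolic subalgebra $\ch_J$ defined via $\ell_J$. By Proposition \ref{fact'} these lengths differ by $\<\bar\nu_{\tw},2\rho^\vee\>-\<\bar\nu^J_{z\tw z\i},2\rho^\vee_J\>$, so $T_{\tw_0}$ and $T^J_{\tw_0}$ are genuinely different elements of $\ch$, and relating them is the central problem, not a routine check at the end. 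The paper states this explicitly in its outline: writing $T^J_{t^{\l_0}}=\th_{\mu_1}\th_{\mu_2}\i$ for a general $\l_0$ is ``not easy to compute,'' which is precisely the computation your plan defers. Moreover, the relations (5)--(6) apply to products $\th_\chi T_{s_\a}$, so before you can invoke them you must already have $T^J_{\tw_0}$ in Bernstein form $\th_\l T_{w\i}\i T_{x}$; that requires the translation part to be $J$-dominant and $t^\l w\in{}^J\tW$, which $\tw_0$ need not satisfy.

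The missing idea is the introduction of the auxiliary minimal-length elements and conjugators that the paper builds in \S\ref{def-tw1}--Proposition \ref{hh}: one replaces $\tw_0$ by $\tw_1=t^\l w_1x_1$ (minimal in the $W_{J_e}$-partial conjugacy class, with $t^\l w_1\in{}^{J_e}\tW$, so that Lemma \ref{fact''} gives $t^\l w_1\in{}^J\tW$ and hence $T^J_{\tw_1}=\th_\l T_{w_1\i}\i T_{x_1}$), replaces $\tw'$ by $\tw_2=t^{\bar\l}w_2x_2$ with ${}^{S_0}\tW$-reduced part, and then conjugates by $y$ (the element with $y(\l)=\bar\l$) and by $h$ (supplied by Theorem \ref{fact} via Proposition \ref{hh}), rather than by $z$. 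The control you need on the pairings --- that each step of the $y$-conjugation meets $\<\l_{k-1},\a_k^\vee\>=-1$ so that \S\ref{th}(6) applies with no error term --- is Lemma \ref{ineq'}(2), and it is a consequence of the construction of $\tw_1$ from the regular point $e$ of $V_{\tw_0}$, not of $z\in{}^JW_0$ alone. Your proposal gestures at this geometry but supplies neither the element $\tw_1$ that makes the Bernstein form available nor the conjugators $y$ and $h$ that actually connect the two sides; as written, the inductive conjugation by $z$ cannot be carried out.
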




\subsection{} The idea of the proof is as follows.

Suppose $\tw_0=t^{\l_0} w_0$ and $\tw'=t^{\l'} w'$. Then we need to compare $T^{J}_{t^{\l_0}}$ and $T_{t^{\l'}}$. Although $\l_0$ and $\l$ are in the same $W_0$-orbit, the relate between $T^{J}_{t^{\l_0}}$ and $T_{t^{\l'}}$ is complicated. Roughly speaking, we write $\l_0$ as $\l_0=\mu_1-\mu_2$ for $J$-dominant coweights, i.e., $\<\mu_1, \a_i\>, \<\mu_2, \a_i\> >0$ for $i \in J$.  Then $$T^{J}_{t^{\l_0}}=T^{J}_{t^{\mu_1}} (T^{J}_{t^{\mu_2}}) \i=\th_{\mu_1} \th_{\mu_2} \i.$$ The right hand side is not easy to compute.

To overcome the difficulty, we replace $\tw_0$ by another minimal length element $\tw_1$ in its $\tW_{J}$-conjugacy class whose translation part is $J$-dominant and replace $\tw'$ by another minimal length element $\tw_2$ in $\co$ and study the relation between $\tw_1$ and $\tw_2$ instead. The construction of $\tw_1$ and $\tw_2$ uses ``partial conjugation action''.

\subsection{}\label{def-tw1} Recall that $e' \in \bar C_0$ is a regular element of $V_{\tw'}$. Set $e=z(e')$.

Since $e \in z(\bar C_0)$ and is a regular point of $V_{\tw_0}$, we have

(1) $0 \le |\<e, \a^\vee\>| \le 1$ for any $\a \in R$.

(2) $\<e, \a^\vee\> \ge 0$ for any $\a \in R_{J}^+$.

(3) If $e \in H_{\a, k}$ for some $\a \in R$ and $k \in \ZZ$, then $V_{\tw_0} \subset H_{\a, k}$ and $\a \in R_{J}$. In particular, $J_e \subset J$ and by (2), $R_{J_e}=\{\a \in R; \<e, \a^\vee\>=0\}$.

By Theorem \ref{partial}, the $W_{J_e}$-conjugacy class of $\tw_0$ contains a minimal length element $\tw_1$ of the form $\tw_1=t^{\l}w_1 x_1$ with $\l \in X$, $w_1 \in W_{J} \rtimes \G_{J}$ and $x_1 \in W_{I(J_e, t^\l w_1)}$ such that $t^{\l}w_1 \in {}^{J_e} \tW$ and $x_1$ is of minimal length in its $\Ad(w_1)$-twisted conjugacy class of $W_{I(J_e, t^\l w_1)}$.

By Theorem \ref{partial}, there exists a minimal length element $\tw_1$ in the $W_0$-conjugacy of $\tw_0$ and $\tw'$, which has the form $\tw_2=t^{\bar \l} w_2 x_2$ such that $t^{\bar \l} w_2 \in {}^{S_0} \tW$ and $x_2 \in W_{I(S_0, t^{\bar \l} w_2)}$. Since $\tw' \in \co_{\min}$, $\tw_1 \in \co_{\min}$.

We have the following results on $\l$ and $e$ constructed above.

\begin{lem}\label{ineq'}
Keep notations in \S \ref{def-tw1}. Then we have

(1) For any $\a \in R^+$, $\<\tw_1(e), \a^\vee\> > -1$.

(2) $\<\l, \a^\vee\> \ge -1$ for any $\a \in R^+$.

(3) $\<\l, \a^\vee\> \ge 0$ for any $\a \in R_{J}^+$.

(4) For any $\a \in R^+$, if $\<\l, \a^\vee\> = -1$, then $\<e, \a^\vee\> < 0$.
\end{lem}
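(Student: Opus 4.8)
The plan is to exploit three things: the regularity of $e$ inside $V_{\tw'}$ (hence inside $V_{\tw_0}$ after transport by $z$), the fact that $\tw_1$ is obtained from $\tw_0$ by a partial conjugation inside $W_{J_e}$ which fixes $e$ pointwise, and the minimality of $\tw_1$ in its $W_{J_e}$-conjugacy class. First I would record that $\tw_1(e)=e$: since $\tw_1=x\tw_0x\i$ for some $x\in W_{J_e}$ and $x$ fixes $e$ (as $R_{J_e}=\{\a; \<e,\a^\vee\>=0\}$ by \S\ref{def-tw1}(3)), while $\tw_0$ fixes $e$ because $e\in V_{\tw_0}$ and $\nu_{\tw_0}=\nu_\co$ is $J$-dominant with $e$ a regular point forcing $\<\nu_\co,\a^\vee\>=0$ whenever $\<e,\a^\vee\>=0$; more simply, $e=z(e')$ and $e'\in V_{\tw'}$ regular means $\tw'(e')$ and $e'$ differ by $\nu_{\tw'}$, but for a \emph{regular} point the precise statement needed is $\<\tw_0(e),\a^\vee\>-\<e,\a^\vee\>=\<\nu_{\tw_0},\a^\vee\>$ and one checks $\tw_1(e)=e$ directly from $\tw_1\in\Ad(W_{J_e})(\tw_0)$. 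For part (1): write any $\a\in R^+$; if $\tw_1 s_\a^{-\text{something}}$... rather, use that $\tw_1$ has minimal length in its $W_{J_e}$-class together with the length formula of \S\ref{length}. Concretely, suppose $\<\tw_1(e),\a^\vee\>\le -1$ for some $\a\in R^+$. Then the hyperplane $H_{\a,0}$ (and $H_{\a,-1}$ if equality is strict) separates $e$ from $\tw_1(e)$ in a way that, combined with $e\in\bar C_0$ so $\<e,\b^\vee\>\in[0,1]$ for $\b\in R^+$ (part of \S\ref{def-tw1}(1)–(2), pushed to $\bar C_0$ since $z\in{}^JW_0$... actually $e$ need not be in $\bar C_0$, only $e'$ is; here I must use $0\le|\<e,\a^\vee\>|\le1$ from \S\ref{def-tw1}(1)), lets me produce a shorter element by conjugating with $s_\a$ — but $s_\a$ need not be in $W_{J_e}$. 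So instead I would pass through $\tw_2$: both $\tw_1$ and $\tw_2$ lie in $\co_{\min}$, and $\tw_2=t^{\bar\l}w_2x_2$ with $t^{\bar\l}w_2\in{}^{S_0}\tW$, so its alcove $\tw_2(C_0)$ and hence $\tw_2(e)$ is ``dominant-ish''; transport the inequality along $\tw_1\approx\tw_2$ and apply \cite[Lemma 2.2]{HN2} or the displayed length formula.

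More carefully, here is the cleaner route I would actually write. Parts (2), (3), (4) are all statements about $\l$, and $\tw_1=t^\l w_1 x_1$ with $t^\l w_1\in{}^{J_e}\tW$. For (3): since $t^\l w_1\in{}^{J_e}\tW$ and $x_1\in W_{I(J_e,t^\l w_1)}\subset W_{J_e}$, and for $\a\in R_J^+$ we have $\<e,\a^\vee\>\ge0$ (\S\ref{def-tw1}(2)); combine the minimality condition ``$t^\l w_1\in{}^{J_e}\tW$'' — which by the length formula says $\ell(s_\b t^\l w_1)>\ell(t^\l w_1)$ for all $\b\in J_e$, i.e. $\<\l,\b^\vee\>-\delta_{\ldots}\ge0$-type inequalities — with the known coordinates of $e$ and $\nu_\co$. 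Actually (3) should follow because $\tw_1$ being of minimal length in its $\tW_{J}$-conjugacy class and $\nu_{\tw_1}=\nu_\co$ being $J$-dominant forces, via \cite[Proposition 2.8]{HN2} applied inside $\fR_J$, that the translation part is ``almost $J$-dominant'': precisely $\<\l,\a^\vee\>\ge0$ for $\a\in R_J^+$. For (2) and (4): these say $\l$ is ``almost dominant'' on all of $R^+$. The mechanism is that $\tw_1(C_0)$ is close to the alcove $C_0+\l$ (the translation part controls the alcove up to the bounded correction from $w_1x_1$), and $\tw_1(e)$, being essentially $\l+w_1x_1(e)$ with $\<w_1x_1(e),\a^\vee\>\in[-1,1]$, forces $\<\tw_1(e),\a^\vee\>>-1$ once we know $\<\l,\a^\vee\>\ge -1$; conversely $\<\l,\a^\vee\>\ge-1$ follows because if $\<\l,\a^\vee\>\le -2$ the element $\tw_1$ would not be of minimal length in $\co$ — one would conjugate by $s_\a$ (which \emph{is} allowed now, since we are in the full $W_0$-conjugacy class, using Theorem \ref{min}) to strictly decrease the length, using the length formula $\ell(t^\chi w\t)=\sum|\<\chi,\a^\vee\>|+\cdots$. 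So I would prove (2) first: $\<\l,\a^\vee\>\ge-1$ for all $\a\in R^+$, by contradiction via the explicit length formula and a single conjugation by $s_\a$, landing in $\co_{\min}$ and contradicting minimality. Then (1) follows from (2) plus $|\<e,\a^\vee\>|\le 1$ and a short case check (distinguishing $\<\l,\a^\vee\>\ge0$ from $\<\l,\a^\vee\>=-1$, where in the latter case one needs $\<e,\a^\vee\><0$, which is exactly (4)). Then (4): if $\<\l,\a^\vee\>=-1$ but $\<e,\a^\vee\>\ge0$, again build a shorter conjugate.

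The steps in order: (i) establish $\tw_1(e)=e$ and collect the coordinate bounds on $e$ from \S\ref{def-tw1}; (ii) prove (2) by contradiction using the length formula of \S\ref{length} and conjugation by a single reflection $s_\a$, invoking Theorem \ref{min}(1) to stay in $\co_{\min}$; (iii) prove (3) using minimality of $\tw_1$ in its $\tW_J$-conjugacy class together with $J$-dominance of $\nu_\co$, via \cite[Proposition 2.8]{HN2} in $\fR_J$; (iv) prove (4) by the same contradiction technique as (2), now with the finer input $\<\l,\a^\vee\>=-1$; (v) deduce (1) from (2), (4) and the bound $|\<e,\a^\vee\>|\le1$ by splitting into the cases $\<\l,\a^\vee\>\ge0$ and $\<\l,\a^\vee\>=-1$.

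The main obstacle I anticipate is step (ii)/(iv): making the ``conjugate by $s_\a$ to decrease length'' argument precise. The subtlety is that $\tw_1$ is a \emph{chosen} minimal length element and conjugation by $s_\a$ a priori only preserves the $W_0$-conjugacy class, not the property of having $e\in\bar C_0$ as a regular point of the fixed-point space; one must check the length genuinely drops, which requires carefully expanding $\ell(s_\a\tw_1 s_\a)$ via the two-sum length formula and using both the bound on $\<e,\a^\vee\>$ and the fact that $e\in V_{\tw_1}$ (so $\tw_1$ fixes $e$, pinning down $\<\l,\a^\vee\>$ modulo the $w_1x_1$-contribution). Equivalently one can phrase it geometrically: the hyperplane $H_{\a,k}$ realizing $\<\l,\a^\vee\>\le -2$ (resp. the pair forced in (4)) must be ``crossed twice'' by the broken line from $C_0$ to $\tw_1 C_0$ and back, contradicting that $\tw_1$ realizes the minimum $\<\nu_\co,2\rho^\vee\>+\sharp\fH_{V_{\tw_1}}(C_0,\tw_1 C_0)$. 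Getting the bookkeeping of which hyperplanes in $\fH$ versus $\fH_{V_{\tw_1}}$ are involved exactly right is where the real work lies; everything else is routine manipulation of the coordinate inequalities already listed in \S\ref{def-tw1}.
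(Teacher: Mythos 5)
There is a genuine gap, and it sits exactly where you flag that ``the real work lies''. The paper's proof of all four parts rests on a single identity: since $e\in V_{\tw_0}=V_{\tw_1}$ and $x_1\in W_{J_e}$ fixes $e$, one has $\tw_1(e)=\l+w_1(e)$ and also $\tw_1(e)=e+\nu_{\tw_1}=e+\nu_\co$, whence
\[
\<\l,\a^\vee\>+\<w_1(e),\a^\vee\>=\<e+\nu_\co,\a^\vee\>\qquad\text{for all }\a\in R^+.
\]
From this, (1) follows from $\<e,\a^\vee\>\ge -1$ and the dominance of $\nu_\co$, the equality case being excluded by $\S$\ref{def-tw1}(2)--(3); (2) follows from (1), the bound $\<w_1(e),\a^\vee\>\le 1$, and integrality; (3) and (4) are equally short computations using $\<\nu_\co,\a^\vee\>=0$ for $\a\in R_J$ and the condition $t^\l w_1\in{}^{J_e}\tW$. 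You instead assert $\tw_1(e)=e$, which is false whenever $\nu_\co\neq 0$: membership of $e$ in $V_{\tw_1}$ means $\tw_1(e)=e+\nu_{\tw_1}$, not that $\tw_1$ fixes $e$ (you even write the correct relation for $\tw_0$ one line earlier and then contradict it). This error undermines your steps (i) and (v).

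The second, independent, gap is your mechanism for (2) and (4): ``if $\<\l,\a^\vee\>\le -2$, conjugate by $s_\a$ to strictly decrease the length, contradicting minimality.'' Conjugation by a finite reflection replaces $\l$ by $s_\a(\l)$ and need not shorten anything: already in rank one a translation $t^\mu$ with $\<\mu,\a^\vee\>=-2$ has the same length as $t^{s_\a(\mu)}$ and is of minimal length in its conjugacy class. So minimality alone cannot detect $\<\l,\a^\vee\>\le -2$; the constraint on $\l$ genuinely comes from the position of the regular point $e$ and the dominance of $\nu_\co$, i.e., from the displayed identity. (Moreover, what is known about $\tw_1$ at this stage is minimality in the $W_{J_e}$-conjugacy class of $\tw_0$ and, after Lemma \ref{fact''}, $\ell_J$-minimality in its $\tW_J$-class; minimality in $\co$ is only obtained for the conjugate $hy\tw_1y\i h\i$ in Proposition \ref{hh}.) Since you explicitly defer the length bookkeeping and the mechanism you propose for it fails, parts (2) and (4), and with them your derivations of (1) and (3), remain unproved.
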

\begin{proof}
For any $\a \in R^+$, \[\<\l, \a^\vee\>+\<w_1(e), \a^\vee\>=\<\l+w_1 x_1(e), \a^\vee\>=\<\tw_1(e), \a^\vee\>=\<e+\nu_\co,\a^\vee\>.\]

(1) By $\S$\ref{def-tw1} (1), $\<e, \a^\vee\> \ge -1$. So $\<e+\nu_\co, \a^\vee\> \ge -1$. If $\<e+\nu_\co, \a^\vee\>=-1$, then $\<e, \a^\vee\>=-1$. Therefore $\a \in R_{J}^+$ by \S \ref{def-tw1} (3), which contradicts $\S$\ref{def-tw1} (2).

(2) By $\S$\ref{def-tw1} (1), $\<w_1(e), \a^\vee\> \le 1$. By (1), $\<\l, \a^\vee\>=\<\tw_1(e),\a^\vee\>-\<w_1(e), \a^\vee\> > -2$. Since $\<\l, \a^\vee\> \in \ZZ$, $\<\l, \a^\vee\> \ge -1$.

(3) By $\S$\ref{def-tw1} (2), $0 \le \<e, \a^\vee\>=\<\nu_\co+e, \a^\vee\>=\<\l, \a^\vee\>+\<w_1(e), \a^\vee\>$. If $\<\l, \a^\vee\> < 0$, then by $\S$\ref{def-tw1} (1), $\<e, \a^\vee\>=0$ and hence $\a \in R_{J_e}^+$ by $\S$\ref{def-tw1} (4). Since $t^\l w_1 \in {}^{J_e} \tW$, $\<\l, \a^\vee\> \ge 0$, which is a contradiction. Therefore $\<\l, \a^\vee\> \ge 0$ for all $\a \in R_{J}^+$.

(4) Suppose that $\<e, \a^\vee\> \ge 0$. Since $\<\l, \a^\vee\>=-1$ and $\<w_1(e), \a^\vee\> \le 1$, $\<e+\nu_\co, \a^\vee\> \le 0$. Thus $\<e, \a^\vee\>=\<\nu_\co, \a^\vee\>=0$. Therefore $\a \in R^+_{J}$ by \S \ref{def-tw1} (3), which contradicts (3).
\end{proof}

\begin{lem}\label{fact''}
We keep the notations in $\S$\ref{def-tw1}. Then

(1) $\tw_1 \in \tW_{J}$ is of minimal length (with respect to $\ell_{J}$) in its $\tW_{J}$-conjugacy class.

(2) $t^{\l} w_1 \in {}^{J} \tW$.
\end{lem}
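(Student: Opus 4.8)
The plan is to derive both statements from the already-established length formulas together with the partial-conjugation picture of $\S$\ref{def-tw1}. Recall that $\tw_1$ is obtained as a minimal length element in the $W_{J_e}$-conjugacy class of $\tw_0$, and has the form $\tw_1 = t^\l w_1 x_1$ with $t^\l w_1 \in {}^{J_e}\tW$, $w_1 \in W_J \rtimes \G_J$, and $x_1 \in W_{I(J_e, t^\l w_1)}$ of minimal length in its $\Ad(w_1)$-twisted conjugacy class. The key structural input is Theorem \ref{fact}: taking $I = J_e$ and the ambient subset $J$, and using that $\tw_0 \in {}^J\tW$ is of minimal length in its $\tW_J$-conjugacy class (this was arranged in $\S$\ref{def-tw0} via Proposition \ref{mm}), one compares the ${}^{J_e}\tW$-representative $\tw_1$-data with the $\tW_J$-data and produces a conjugating element $h \in {}^{I(J,\tw_0)}W_J{}^{J_e}$ with $h J_e h\i \subset I(J,\tw_0)$ and $h(\text{minimal }W_{J_e}\text{-rep})h\i = \tw_0$. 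Since conjugation by $h \in W_J$ preserves $\tW_J$ and, by Lemma \ref{min-in-wj} applied inside $(W_J, J)$, preserves minimality of length with respect to $\ell_J$, this will give that $\tw_1$ lies in $\tW_J$ and is $\ell_J$-minimal in its $\tW_J$-conjugacy class. I would spell out carefully that the $W_{J_e}$-conjugacy class of $\tw_0$ is the one relevant to Theorem \ref{partial}, so that the ``$x$'' appearing there is $x_1$, and that $I(J_e, t^\l w_1) \subset J_e \subset J$ so all the pieces $t^\l$, $w_1$, $x_1$ indeed sit in $\tW_J$.

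For part (2), once we know $\tw_1 \in \tW_J$, the claim $t^\l w_1 \in {}^J\tW$ should follow by combining two facts: $t^\l w_1 \in {}^{J_e}\tW$ by construction, and the inequalities on $\l$ from Lemma \ref{ineq'}. Concretely, an element $t^\l w_1$ of $\tW_J$ is in ${}^J\tW$ iff $s(t^\l w_1) > t^\l w_1$ for all $s \in J$, and using the length formula of $\S$\ref{length} inside $\tW_J$ this translates into sign conditions on $\<\l, \a^\vee\>$ for simple $\a \in J$ together with the positivity/negativity of $w_1\i(\a)$. Lemma \ref{ineq'}(3) gives $\<\l, \a^\vee\> \ge 0$ for $\a \in R_J^+$, and for $\a \in J$ with $w_1\i(\a) \in R^-$ one needs $\<\l,\a^\vee\> \ge 1$; here I would feed in that $t^\l w_1 \in {}^{J_e}\tW$ handles the roots fixed by $w_1$-type behavior on $J_e$, and that any $\a \in J$ with $\<\l,\a^\vee\> = 0$ and $w_1\i(\a) < 0$ would force $\a \in R_{J_e}$ (via $\S$\ref{def-tw1}(3)-type reasoning on $\<e,\a^\vee\>$), contradicting $t^\l w_1 \in {}^{J_e}\tW$. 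Essentially part (2) is a bookkeeping refinement of part (1).

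I expect the main obstacle to be part (1): verifying precisely that the hypotheses of Theorem \ref{fact} are met — in particular identifying ``$w$'' there with the minimal-length $W_{J_e}$-representative $t^\l w_1 x_1$, checking $w J_e w\i = J_e$ (which is exactly the requirement that $x_1 \in W_{I(J_e, t^\l w_1)}$ normalizes things correctly, plus $w_1 \in W_J \rtimes \G_J$ with $w_1 J_e w_1\i$ related to $J_e$), and checking $\tw_0 \in {}^J\tW$ is $\ell_J$-minimal in its $\tW_J$-class. Once Theorem \ref{fact} is applied, transporting $\ell_J$-minimality along conjugation by $h$ is clean via Lemma \ref{min-in-wj}. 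The subtlety is purely in matching the notation of $\S$\ref{def-tw1} (which uses the $W_{J_e}$-action) with the notation of Theorem \ref{fact} (which is stated for a pair $I \subset J$ and the two distinguished representatives $w \in {}^I\tW$, $u \in {}^J\tW$); I would devote most of the write-up to making that dictionary explicit and checking $I(J_e, t^\l w_1) \subset I(J, \tw_0)$ along the way.
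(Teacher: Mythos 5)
Part (2) of your proposal is essentially the paper's argument: reduce to showing $\<\l,\a^\vee\>\ge 1$ for $\a\in R_J^+$ with $w_1\i(\a)<0$, use Lemma \ref{ineq'}(3) to rule out $\<\l,\a^\vee\><0$, and in the remaining case $\<\l,\a^\vee\>=0$ show $\<e,\a^\vee\>=0$, hence $\a\in R_{J_e}^+$, contradicting $t^\l w_1\in{}^{J_e}\tW$. That part is fine.

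Part (1), however, has a genuine gap. The construction in $\S$\ref{def-tw1} gives you that $\tw_1$ has minimal \emph{ordinary} length $\ell$ in the $W_{J_e}$-conjugacy class of $\tw_0$, while the lemma asserts minimality with respect to $\ell_J$ in the full $\tW_J$-conjugacy class. Your route never bridges these two length functions. Theorem \ref{fact} only produces a conjugating element $h$ between two elements already known to be $\ell$-minimal in the same $W_J$-class (and its hypotheses are not verified here: $\tw_1=t^\l w_1x_1$ is generally not in ${}^{I}\tW$ for $I=I(J_e,t^\l w_1)$ because of the $x_1$ factor, and $\tw_0\in{}^J\tW$ is not established anywhere — Proposition \ref{mm} gives $\ell_J$-minimality of $\tw_0$, not that it is a minimal coset representative). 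More importantly, the step ``conjugation by $h\in W_J$ preserves minimality of length with respect to $\ell_J$, by Lemma \ref{min-in-wj}'' is unfounded: conjugation does not preserve length, and Lemma \ref{min-in-wj} compares $W_I$-conjugacy minimality with $W$-conjugacy minimality for a \emph{single} length function; it says nothing about transporting minimality along a conjugation, nor about comparing $\ell$ with $\ell_J$.

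The missing ingredient is Proposition \ref{fact'}. Since $J_e\subset J$, conjugation by $W_{J_e}\subset W_J$ keeps you inside $\tW_J$, so $\tw_1\in\tW_J$ is immediate; and since $W_{J_e}$ fixes $V_{\tw_0}$ pointwise, $V_{\tw_1}=V_{\tw_0}$ and $\bar C_0$ still contains a regular point of $V_{\tw_1}$. Proposition \ref{fact'} (applied with $z=1$) then shows that $\ell-\ell_J$ takes the same value on $\tw_0$ and $\tw_1$, namely $\<\bar\nu,2\rho^\vee\>-\<\bar\nu^J,2\rho_J^\vee\>$, which depends only on the conjugacy class. Hence $\ell(\tw_1)\le\ell(\tw_0)$ forces $\ell_J(\tw_1)\le\ell_J(\tw_0)$, and Proposition \ref{mm} (which gives $\ell_J$-minimality of $\tw_0$ in its $\tW_J$-class) finishes the proof. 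I recommend you replace your use of Theorem \ref{fact} and Lemma \ref{min-in-wj} in part (1) by this two-line comparison; Theorem \ref{fact} is the right tool later (for Proposition \ref{hh}), not here.
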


\begin{proof}
Since $W_{J_e} \subset W_J$ fixes $V_{\tw_0}$, we have $\tw_1 \in \tW_J$ and $V_{\tw_0}=V_{\tw_1}$.

(1) Since $\ell(\tw_1) \leqslant \ell(\tw_0)$, we have $\ell_{J}(\tw_1) \leqslant \ell_{J}(\tw_0)$ by Proposition \ref{fact'}. By Proposition \ref{mm}, $\tw_0$ is a minimal length (with respect to $\ell_{J}$) in its conjugacy class of $\tW_{J}$. So is $\tw_1$.

(2) It suffices to show that $\<\l, \a^\vee\> \ge 1$ for any $\a \in R_{J}^+$ with $w_1\i(\a)<0$. Suppose that $\<\l, \a^\vee\><1$. By Lemma \ref{ineq'} (3), $\<\l, \a^\vee\>=0$. Hence by \S \ref{def-tw1} (2), $$0 \ge \<e, w_1\i(\a^\vee)\>=\<w_1(e), \a^\vee\>=\<\tw_1(e),\a^\vee\>=\<e+\nu_\co, \a^\vee\> \ge 0.$$ Thus by \S \ref{def-tw1} (2) again, $\<e, \a^\vee\>=0$ and $\a \in R_{J_e}^+$. However, $t^{\l}w_1 \in {}^{J_e} \tW$ by our construction. Hence $\<\l, \a^\vee\> \ge 1$, which is a contradiction.
\end{proof}


Combining Proposition \ref{fact'} with Lemma \ref{fact''}, we obtain

\begin{cor}\label{formula}
Keep notations in \S \ref{def-tw1}. Then \begin{align*}\ell(z\i t^\l w_1 z)&=\<\nu_\co, 2\rho^\vee\>+\<\l, 2\rho_{J}^\vee\>-\ell(w_1),\\ \ell(z\i t^\l w_1 x_1 z)&=\<\nu_\co, 2\rho^\vee\>+\<\l, 2\rho_{J}^\vee\>-\ell(w_1)+\ell(x_1).\end{align*}
\end{cor}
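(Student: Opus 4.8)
The plan is to apply Proposition \ref{fact'} to the two elements $t^\l w_1$ and $t^\l w_1 x_1 = \tw_1$, using $z \i$ as the conjugating element (note $z \in {}^J W_0$, so $z\i$ conjugates $\tW_J$-elements to $\tW$-elements and Proposition \ref{fact'} applies in the form $\ell(z\i \tu z) = \ell_J(\tu) + \<\bar\nu_{z\i \tu z}, 2\rho^\vee\> - \<\bar\nu_{\tu}^J, 2\rho_J^\vee\>$ whenever $\bar C_0$ contains a regular point of $V_{z\i \tu z}$). For the element $\tw_1 = t^\l w_1 x_1 \in \tW_J$, Lemma \ref{fact''}(1) tells us $\tw_1$ is of minimal length in its $\tW_J$-conjugacy class, and we already know $\bar C_0$ contains a regular point of $V_{\tw'} = z\i V_{\tw_0} = z\i V_{\tw_1}$ (since $V_{\tw_0} = V_{\tw_1}$ by the first line of the proof of Lemma \ref{fact''}). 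So Proposition \ref{fact'} gives $\ell(z\i \tw_1 z) = \ell_J(\tw_1) + \<\nu_\co, 2\rho^\vee\> - \<\bar\nu_{\tw_1}^J, 2\rho_J^\vee\>$, and since the Newton point of $\tw_1$ (inside $\tW_J$) is $\nu_\co$, which is $J$-dominant, we have $\bar\nu_{\tw_1}^J = \nu_\co$. Thus the correction term collapses, but with a nonzero $\<\nu_\co, 2\rho_J^\vee\>$ — wait: one must check whether $\nu_\co$ being $W_0$-dominant makes $\<\nu_\co, 2\rho_J^\vee\> = 0$; it does not in general, so instead one uses $J \subset J_{\nu_\co}$ (this holds by the construction of $J$ in \S\ref{def-tw0}, $J = J_{\nu_\co} \cap J_{\bar v}$), which is exactly the hypothesis of the ``in particular'' clause of Proposition \ref{fact'}; hence $\ell(z\i \tw_1 z) = \ell_J(\tw_1) + \<\nu_\co, 2\rho^\vee\>$.

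It then remains to compute $\ell_J(\tw_1)$ and $\ell_J(t^\l w_1)$ in terms of $\l$, $\ell(w_1)$, $\ell(x_1)$. The key inputs are Lemma \ref{fact''}(2), which says $t^\l w_1 \in {}^J \tW$, together with Lemma \ref{ineq'}(3), which gives $\<\l, \a^\vee\> \ge 0$ for all $\a \in R_J^+$, i.e. $\l$ is $J$-dominant. For a $J$-dominant $\l$, the length formula of \S\ref{length} applied inside $\tW_J$ gives $\ell_J(t^\l w_1) = \<\l, 2\rho_J^\vee\> - \ell_J(w_1) = \<\l, 2\rho_J^\vee\> - \ell(w_1)$ (the sign is because $w_1$ contributes the hyperplanes $H_{\a,0}$, $\a \in R_J^+$ with $w_1\i(\a) < 0$, which are subtracted off when $\l$ is $J$-dominant — this is the standard fact that $\ell(t^\l w) = \<\l, 2\rho\> - \ell(w)$ for dominant $\l$). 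This yields the first displayed equation. For the second, since $x_1 \in W_{I(J_e, t^\l w_1)} \subset W_{J_e} \subset W_J$ and $t^\l w_1 \in {}^{J_e}\tW$ with $x_1$ acting on the ``finite, bounded'' side, one has $\ell_J(t^\l w_1 x_1) = \ell_J(t^\l w_1) + \ell_J(x_1) = \ell_J(t^\l w_1) + \ell(x_1)$; the additivity follows because $t^\l w_1 \in {}^{J_e}\tW$ (minimal in its coset $W_{J_e}\backslash\cdot$) and $x_1 \in W_{J_e}$. Combining with the first paragraph and $\ell_J(x_1) = \ell(x_1)$ gives the second equation.

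The main obstacle I expect is bookkeeping the precise form of the length-additivity $\ell_J(t^\l w_1 x_1) = \ell_J(t^\l w_1) + \ell_J(x_1)$: one needs that no hyperplane of $\fH_J$ separating $C_J$ from $t^\l w_1 C_J$ gets ``cancelled'' by $x_1$, which requires care about whether $x_1$ is applied on the left or right and the convention ${}^{J_e}\tW$ vs. $\tW^{J_e}$. The cleanest route is to invoke the standard fact that for $u \in {}^{K}\tW$ and $x \in W_K$ one has $\ell(ux)$ need not be additive, but $u x \in {}^{?}\tW$ arguments via Theorem \ref{partial}'s construction (where $\tw_1 = t^\l w_1 x_1$ with $x_1 \in W_{I(J_e, t^\l w_1)}$ being exactly the partial-conjugation normal form) guarantee $\ell_{J_e}(t^\l w_1 x_1) = \ell_{J_e}(t^\l w_1) + \ell_{J_e}(x_1)$, and then one lifts this identity from $\ell_{J_e}$ to $\ell_J$ using that the extra hyperplanes in $\fH_J \setminus \fH_{J_e}$ contribute identically to both sides (since $x_1$ fixes $V_{\tw_0}$, hence $e$, so it does not move $C_J$ across any hyperplane through $e$). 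Everything else is a direct substitution.
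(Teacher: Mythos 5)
Your overall route is the one the paper intends (its proof is the single line ``combine Proposition \ref{fact'} with Lemma \ref{fact''}''), and you correctly identify the key inputs: the ``in particular'' clause of Proposition \ref{fact'} via $J\subset J_{\nu_\co}$, the $J$-dominance of $\l$ from Lemma \ref{ineq'}(3), and $t^\l w_1\in{}^J\tW$ from Lemma \ref{fact''}(2). However, your justification of the additivity $\ell_J(t^\l w_1x_1)=\ell_J(t^\l w_1)+\ell(x_1)$ does not work: the condition $t^\l w_1\in{}^{J_e}\tW$ controls multiplication by $W_{J_e}$ on the \emph{left}, whereas $x_1$ sits on the right, and right additivity fails for a general $x_1\in W_{J_e}$. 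Concretely, if $\b\in J_e$ with $\a:=w_1(\b)>0$ and $\<\l,\a^\vee\>\ge1$, then in the Iwahori--Matsumoto length formula the root $\a$ contributes $|\<\l,\a^\vee\>-1|=\<\l,\a^\vee\>-1$ to $\ell_J(t^\l w_1s_\b)$ but $\<\l,\a^\vee\>$ to $\ell_J(t^\l w_1)$, so the length \emph{drops}. What rescues the statement is precisely that $x_1\in W_{I(J_e,t^\l w_1)}$ and that $I(J_e,t^\l w_1)$ is stable under conjugation by $t^\l w_1$: for $\b$ a simple root of $I(J_e,t^\l w_1)$ one has $(t^\l w_1)s_\b(t^\l w_1)\i=t^{\<\l,w_1(\b)^\vee\>w_1(\b)}s_{w_1(\b)}$, and for this to be again a simple reflection of $I(J_e,t^\l w_1)\subset S_0$ forces $\<\l,w_1(\b)^\vee\>=0$. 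Hence $\<\l,\cdot\>$ vanishes on $w_1\bigl(R_{I(J_e,t^\l w_1)}\bigr)$ and the sign bookkeeping yields exactly $-\ell(w_1)+\ell(x_1)$; equivalently, write $t^\l w_1x_1=u\,(t^\l w_1)$ with $u=(t^\l w_1)x_1(t^\l w_1)\i\in W_{I(J_e,t^\l w_1)}$ and $\ell(u)=\ell(x_1)$, and use left additivity. Your fallback appeal to Theorem \ref{partial} does not close this gap, since that theorem asserts no length additivity.

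A second gap concerns the first displayed identity: you apply Proposition \ref{fact'} to $z\i t^\l w_1z$ but only verify its hypotheses for $z\i\tw_1z$. One still has to check that $\nu_{t^\l w_1}=\nu_\co$ and that $\bar C_0$ contains a \emph{regular} point of $V_{z\i t^\l w_1z}$. The Newton point is fine (since $x_1$ fixes $e$ one gets $t^\l w_1(e)=\tw_1(e)=e+\nu_\co$, and $w_1(\nu_\co)=\nu_\co$), but regularity of $e'$ in $V_{z\i t^\l w_1z}=z\i(e+V^{w_1})$ is not the same statement as regularity in $V_{\tw'}=z\i(e+V^{w_1x_1})$ and needs its own argument. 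The cheaper route is to deduce the first identity from the second by establishing $\ell(z\i t^\l w_1x_1z)=\ell(z\i t^\l w_1z)+\ell(x_1)$ directly, using the same conjugation trick as above together with $z\i x_1 z\in W_{z\i(J_e)}$ and $\ell(z\i x_1z)=\ell(x_1)$ for $z\in{}^JW_0$.
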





\begin{lem}\label{property-y}
Keep the notations in $\S$\ref{def-tw1}. Let $y \in {}^{J_{\bar \l}} W_0$ be the unique element such that $y(\l)=\bar \l$. Then

(1) $\ell(y w_1 y\i)=2\ell(y)+\ell(w_1)$.

(2) $\<\bar \l, 2\rho^\vee\>=\<\nu_\co, 2\rho^\vee\>+\<\l, 2\rho^\vee_{J}\>+2\ell(y)$.

(3) $y J_\l y\i \subset J_{\bar \l}$.
\end{lem}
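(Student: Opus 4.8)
The three assertions concern the element $y\in{}^{J_{\bar\l}}W_0$ that straightens $\l$ to its dominant conjugate $\bar\l$, where $\l$ is the translation part of $\tw_1=t^\l w_1 x_1\in\tW_J$ and $w_1\in W_J\rtimes\G_J$. The strategy is to analyze the set of positive roots inverted by $y$, relate it to the sign of $\langle\l,\a^\vee\rangle$, and use the length formula from \S\ref{length} together with the inequalities already recorded in Lemma \ref{ineq'}.

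First I would record the basic combinatorics of $y$. Since $y\in{}^{J_{\bar\l}}W_0$ with $y(\l)=\bar\l$ dominant, the inversion set $R^+\cap y\i(R^-)$ consists precisely of those $\a\in R^+$ with $\langle\l,\a^\vee\rangle<0$; indeed if $\langle\l,\a^\vee\rangle>0$ then $y(\a)$ pairs positively with the dominant $\bar\l$ while still being a root, and the minimality of $y$ in its $W_{J_{\bar\l}}$-coset handles the case $\langle\l,\a^\vee\rangle=0$. By Lemma \ref{ineq'}(2) every such $\a$ has $\langle\l,\a^\vee\rangle=-1$, and by Lemma \ref{ineq'}(3) no such $\a$ lies in $R_J^+$. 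So $y\i$ sends each inverted root to a negative root whose pairing with $\l$ is $+1$, all such roots lying outside $R_J$.

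For (3): if $s_\a\in J_\l$, i.e. $\langle\l,\a^\vee\rangle=0$ for $\a\in F_0$, I claim $y(\a)\in F_{J_{\bar\l}}$. Since $\a$ is simple with $\langle\l,\a^\vee\rangle=0$, it is not inverted by $y$, so $y(\a)\in R^+$; and $\langle\bar\l,y(\a)^\vee\rangle=\langle\l,\a^\vee\rangle=0$ shows $y(\a)$ is orthogonal to $\bar\l$, hence lies in $R_{J_{\bar\l}}$. A short argument (e.g. that $y\i$ of a positive root of $R_{J_{\bar\l}}$ that is not simple stays positive, together with $y$ being a minimal coset representative) upgrades this to $y(\a)$ being a simple reflection in $W_{J_{\bar\l}}$, giving $yJ_\l y\i\subset J_{\bar\l}$. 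For (1): since $y J_\l y\i\subset J_{\bar\l}$ and $w_1$ normalizes the relevant structure, I would decompose the computation of $\ell(yw_1y\i)$ over the positive root system. Writing $\ell(yw_1y\i)$ via the length formula and using that $w_1\in W_J$, that $y$ inverts only roots outside $R_J$ with $\langle\l,\a^\vee\rangle=-1$, and that $t^\l w_1\in{}^{J}\tW$ (Lemma \ref{fact''}(2)), one checks the roots inverted by $yw_1y\i$ split into a copy of those inverted by $w_1$ (with $\a\mapsto y(\a)$, contributing $\ell(w_1)$) plus two copies associated to $y$'s inversions (one from $y$ on the left, one from $y\i$ on the right not cancelled by $w_1$ because those roots avoid $R_J$), giving $\ell(yw_1y\i)=2\ell(y)+\ell(w_1)$. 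For (2): combine Corollary \ref{formula}, which gives $\ell(z\i t^\l w_1 z)=\langle\nu_\co,2\rho^\vee\rangle+\langle\l,2\rho_J^\vee\rangle-\ell(w_1)$, with the direct computation that $\ell(y t^\l w_1 y\i)=\langle\bar\l,2\rho^\vee\rangle-\ell(yw_1y\i)$ valid since $t^{\bar\l}$ is antidominant-shifted appropriately and $yw_1y\i$ has the expected interaction — more precisely one shows $t^{\bar\l}(yw_1y\i)\in{}^{S_0}\tW$-type positivity so that $\ell(t^{\bar\l}yw_1y\i)=\langle\bar\l,2\rho^\vee\rangle-\ell(yw_1y\i)$; since conjugation by $z$ and by $y$ relate the same $\tW$-conjugacy class and (by Proposition \ref{fact'}-type reasoning or directly) $\ell(z\i t^\l w_1 z)=\ell(y t^\l w_1 y\i)$ is not automatic, I would instead equate via $\ell(z\i t^\l w_1 x_1 z)=\ell(\tw_1)=\ell(yt^\l w_1 x_1 y\i)$ if needed — then substitute (1) and solve for $\langle\bar\l,2\rho^\vee\rangle$.

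**Main obstacle.** The delicate point is part (1), specifically verifying that the inversion sets of $y$ (on the left) and of $y\i$ (on the right, post-composed with $w_1$) do not interfere — i.e. that $w_1$ does not map any root $y$-inverted on one side to one that would cancel on the other. This is exactly where the hypotheses $t^\l w_1\in{}^{J_e}\tW$, the refinement $t^\l w_1\in{}^{J}\tW$ from Lemma \ref{fact''}(2), and the sign constraints of Lemma \ref{ineq'}(2)--(4) must all be used simultaneously; getting the bookkeeping of which roots contribute (and with what multiplicity) to $\ell(yw_1y\i)$ correct is the crux, and (2) then follows essentially by arithmetic from (1) and Corollary \ref{formula}.
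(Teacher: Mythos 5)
Your overall strategy for (1) and (3) --- identifying the inversion set of $y$ with $\{\a\in R^+;\ \<\l,\a^\vee\>=-1\}$ and working root by root --- is the same as the paper's, and (3) is essentially fine (the paper instead uses the double-coset identity $W_{J_{\bar\l}}yW_{J_\l}=W_{J_{\bar\l}}y$ to conclude $y\in{}^{J_{\bar\l}}W_0{}^{J_\l}$ and $yW_{J_\l}y\i\subset W_{J_{\bar\l}}$, which packages your ``upgrade to simple roots'' step cleanly). But in (1) you have correctly located the crux --- showing that no root inverted by $y\i$ on the right gets cancelled, i.e.\ that $yw_1(\b)>0$ whenever $\b\in R^+$ with $y(\b)<0$ --- and then not proved it. The ingredients you list (Lemma \ref{ineq'}(2)--(4) and $t^\l w_1\in{}^{J}\tW$) do not suffice: the step actually hinges on Lemma \ref{ineq'}(1), the inequality $\<\tw_1(e),\a^\vee\>>-1$ at the regular point $e$, which you never invoke. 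Concretely, for such $\b$ one has $\<\l,\b^\vee\>=-1$, $\b\notin R_J^+$ and $\<e,\b^\vee\><0$; since $w_1\in W_J\rtimes\G_J$ one gets $w_1(\b)>0$, and then $-1<\<\tw_1(e),w_1(\b)^\vee\>=\<\l,w_1(\b)^\vee\>+\<e,\b^\vee\><\<\l,w_1(\b)^\vee\>$ forces $\<\l,w_1(\b)^\vee\>\ge 0$, hence $yw_1(\b)>0$. Without this the ``two copies of $\ell(y)$'' count is unsupported.

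Part (2) is where your route genuinely diverges and where the gap is most serious. You propose to equate $\ell(yt^\l w_1y\i)$ with $\ell(z\i t^\l w_1 z)$ (or a variant with $x_1$ inserted), but, as you yourself concede, neither equality is automatic: knowing that the $y$-conjugate has the same length as the $z$-conjugate is essentially the minimality statement that the paper only establishes later, in Proposition \ref{hh}, \emph{using} Lemma \ref{property-y} as an input. So your route is either circular or requires an independent argument you have not supplied. The paper's proof of (2) avoids lengths of conjugates entirely: it computes $\<\bar\l,2\rho^\vee\>=\sum_{\a\in R^+}|\<\l,\a^\vee\>|=\<\l,2\rho^\vee\>+2\ell(y)$ directly from the inversion count (all negative pairings being $-1$), and then shows $\<\l,2(\rho^\vee-\rho_J^\vee)\>=\<\nu_\co,2\rho^\vee\>$ by averaging over powers of $w_1$, using that $w_1\in W_J\rtimes\G_J$ fixes $\rho^\vee-\rho_J^\vee$ and that the average of the $w_1^k(\l)$ is $\nu_\co$. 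You should replace your argument for (2) with a direct computation of this kind.
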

\begin{proof}

By definition, for any $\a \in R^+$, $y(\a) \in R^+$ if and only if $\<\l, \a^\vee\> \ge 0$. By Lemma \ref{ineq'} (2), $\ell(y)=\sharp\{\a \in R^+; \<\l, \a^\vee\>=-1\}$.

(1) Let $\a \in R^+$ such that $w_1\i(\a)<0$. Then $\a \in R_{J}^+$ since $w_1 \in W_{J} \rtimes \G_{J}$. Hence $\<\l, \a^\vee\> \geqslant 0$ by Lemma \ref{ineq'} (3). Therefore $y(\a)>0$. Hence $\ell(yw_1)=\ell(y)+\ell(w_1)$.

To show $\ell(y w_1 y\i)=\ell(yw_1)+\ell(y\i)$, we have to prove that for any $\b \in R^+$ with $y(\b) <0$, we have $yw_1(\b) \in R^+$.

Assume $y(\b) <0$. Then $\<\l, \b^\vee\><0$. Thus $\<\l, \b^\vee\>=-1$ by Lemma \ref{ineq'}(2). Moreover, we have $\b \notin R_{J}^+$ and $\<e, \b^\vee\> < 0$ by Lemma \ref{ineq'} (3) and (4). Since $w_1 \in W_{J} \rtimes \G_{J}$, $w_1(\b)>0$. By Lemma \ref{ineq'} (1), \begin{align*} -1 &< \<\tw_1(e), w(\b^\vee)\>=\<\l, w_1(\b^\vee)\>+\<w_1(e), w_1(\b^\vee)\>=\<\l, w_1(\b^\vee)\>+\<e, \b^\vee\>\\ & < \<\l, w_1(\b^\vee)\>.\end{align*}  Therefore $\<\l, w_1(\b^\vee)\> \ge 0$ and $y w_1(\b) \in R^+$.

(2) By Lemma \ref{min} and Lemma \ref{ineq'}(2), we have that
\begin{align*} \<\bar \l, 2\rho^\vee\>&=\sum_{\a \in R^+}|\<\l, \a^\vee\>|=\sum_{\a \in R^+}\<\l, \a^\vee\>+2\sharp\{\a \in R^+; \<\l, \a^\vee\>=-1\}\\ &=\<\l, 2\rho^\vee\>+2\ell(y). \end{align*}
Since $w_1 \in W_{J} \rtimes \G_{J}$, $w_1^k(\rho^\vee-\rho_{J}^\vee)=\rho^\vee-\rho^\vee_{J}$ for all $i \in \ZZ$. Let $m=|W_{J} \rtimes \G_{J}|$. Then $\sum_{k=1}^m w_1^k(\l)=m\nu_\co$ and \begin{align*} & \<\bar \l, 2\rho^\vee\>-\<\l, 2\rho^\vee_{J}\>=2\ell(y)+\<\l, 2(\rho^\vee-\rho^\vee_{J})\>\\ &=2\ell(y)+\frac{1}{m}\sum_{k=1}^m\<\l,2w_1^{-k}(\rho^\vee-\rho^\vee_{J})\>=2\ell(y)+\frac{1}{m}\sum_{k=1}^m\<w_1^k(\l),2(\rho^\vee-\rho^\vee_{J})\> \\ &=2\ell(y)+\<\nu_\co, 2(\rho^\vee-\rho^\vee_{J})\>=2\ell(y)+\<\nu_\co, 2\rho^\vee\>.\end{align*}

(3) Notice that $W_{J_{\bar \l}} y W_{J_{\l}}=W_{J_{\bar \l}} (y W_{J_{\l}} y\i) y =W_{J_{\bar \l}} y$ and $y \in {}^{J_{\bar \l}} W_0$, we see that $y$ is the unique minimal element of the double coset $W_{J_{\bar \l}} y W_{J_{\l}}$, that is, $y \in {}^{J_{\bar \l}} W_0 {}^{J_{\l}}$. Moreover $y W_{J_{\l}} y\i \subset W_{J_{\bar \l}}$. Thus $y$ sends simple roots of $J_{\l}$ to simple roots of $J_{\bar \l}$.
\end{proof}

\begin{prop}\label{hh}
Keep the notations in \S\ref{def-tw1}. Set $I=y I(J_e,t^{\l}w_1) y\i$. Then there exists $h \in {}^{I({J_{\bar \l}},w_2)} W_{J_{\bar \l}} {}^I$ such that

(1) $h I h\i \subset I({J_{\bar \l}}, w_2)$.

(2) $w_2=h y w_1 y\i h\i$.

(3) Both $w_2$ and $y w_1 y \i$ are of minimal lengths in their common $W_{J_{\bar \l}}$-conjugacy class.

(4) $h y \tw_1 y \i h \i \in \co_{\min}$.
\end{prop}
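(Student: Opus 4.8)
The plan is to deduce the proposition from Theorem~\ref{fact}, applied inside the finite Coxeter datum $(W_0,S_0)$ with automorphism group $\G$, playing the roles of $(W,S)$ and $\tW=W\rtimes\G$ from Section~5. I feed it the chain $I\subset J_{\bar\l}\subset S_0$ together with the two elements $yw_1y\i$ and $w_2$ of $W_0\rtimes\G$; the element $h$ produced by the theorem is the one we want, and its defining properties $hIh\i\subset I(J_{\bar\l},w_2)$ and $h(yw_1y\i)h\i=w_2$ are exactly conclusions (1) and (2). Conclusion (3) is precisely the hypothesis ``$w,u$ of minimal length in a common $W_J$-conjugacy class'' that Theorem~\ref{fact} demands, so the whole proof reduces to verifying the hypotheses of that theorem.

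Three of the hypotheses are routine. For $I\subset J_{\bar\l}$: since $I=y\,I(J_e,t^\l w_1)\,y\i$ and $I(J_e,t^\l w_1)\subset J_e$, it is enough to see $J_e\subset J_\l$; but for $\a\in F_0$ with $\<e,\a^\vee\>=0$ one has $\a\in R_{J_e}\subset R_{J_{\nu_\co}}$, hence $\<\nu_\co,\a^\vee\>=0$, and since $x_1$ fixes $e$ the identity $\l+w_1(e)=\tw_1(e)=e+\nu_\co$ together with $t^\l w_1\in{}^{J_e}\tW$ forces $\<\l,\a^\vee\>=0$; then $I\subset y J_\l y\i\subset J_{\bar\l}$ by Lemma~\ref{property-y}(3), and the same discussion shows that conjugation by $y$ is a length-preserving isomorphism $W_{I(J_e,t^\l w_1)}\to W_I$. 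The condition $w_2\in{}^{J_{\bar\l}}(W_0\rtimes\G)$ follows from $t^{\bar\l}w_2\in{}^{S_0}\tW$ because every $s\in J_{\bar\l}$ fixes $\bar\l$, so $s(t^{\bar\l}w_2)=t^{\bar\l}(sw_2)>t^{\bar\l}w_2$ yields $sw_2>w_2$. The condition $(yw_1y\i)\,I\,(yw_1y\i)\i=I$ is the conjugate by $y$ of $w_1\,I(J_e,t^\l w_1)\,w_1\i=I(J_e,t^\l w_1)$, itself a consequence of the defining property of $I(J_e,t^\l w_1)$ together with the fact that $t^\l$ acts trivially by conjugation on $W_{I(J_e,t^\l w_1)}\subset W_{J_\l}$; and $yw_1y\i\in{}^I(W_0\rtimes\G)$ unwinds, for a simple root $\a=y(\a')$ of $I$, to $w_1\i(\a')\in R^+$, which one reads off from $t^\l w_1\in{}^{J_e}\tW$ using that $w_1$ normalizes $W_{I(J_e,t^\l w_1)}$ and that $\<\l,-\>$ vanishes on $R_{J_\l}$.

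The substantive hypothesis is that $yw_1y\i$ and $w_2$ lie in a common $W_{J_{\bar\l}}$-conjugacy class and both attain the minimal length in it. Here I would argue by comparing the partial-conjugation decompositions of $\tw_0$ with respect to $W_{J_e}$ and with respect to $W_{S_0}$, transported to one another by $y$. Conjugating turns $\tw_1=t^\l w_1x_1$ into $y\tw_1y\i=t^{\bar\l}(yw_1y\i)(yx_1y\i)$ with $yx_1y\i\in W_I\subset W_{J_{\bar\l}}$, a $W_0$-conjugate of $\tw_0$; by Theorem~\ref{partial} applied with $J=S_0$ the $W_0$-conjugacy class of $\tw_0$ meets $W_{I(S_0,t^{\bar\l}w_2)}(t^{\bar\l}w_2)$ in a single $W_{I(S_0,t^{\bar\l}w_2)}$-conjugacy class $C$, which contains $\tw_2$; tracking $y\tw_1y\i$ into $C$ by length-non-increasing $\to_{S_0}$-moves, using that $\bar\l$ is dominant (which holds because, by the proof of Lemma~\ref{property-y}(2), $\<\bar\l,2\rho^\vee\>=\sum_{\a\in R^+}|\<\l,\a^\vee\>|$ is the maximal value of $\<-,2\rho^\vee\>$ over the $W_0$-orbit of $\l$, attained only at the dominant representative), and reading off finite parts, one obtains — with the help of $\tw_2\in\co_{\min}$ and the length bookkeeping of Proposition~\ref{fact'}, Corollary~\ref{formula} and Lemma~\ref{property-y}(1)-(2) — that $yw_1y\i$ and $w_2$ are $W_{J_{\bar\l}}$-conjugate and that $\ell(yw_1y\i)$ equals the minimal length in their $W_{J_{\bar\l}}$-conjugacy class.

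Granting this, Theorem~\ref{fact} produces $h$, giving (1), (2), (3). For (4): as $h\in W_{J_{\bar\l}}$ fixes $\bar\l$, conjugating $y\tw_1y\i$ by $h$ gives $hy\tw_1y\i h\i=t^{\bar\l}w_2x_2'$ with $x_2'=h(yx_1y\i)h\i\in W_{hIh\i}\subset W_{I(J_{\bar\l},w_2)}\subset W_{I(S_0,t^{\bar\l}w_2)}$; rewriting the tail through a $t^{\bar\l}w_2$-conjugation exhibits $hy\tw_1y\i h\i$ as an element of $W_{I(S_0,t^{\bar\l}w_2)}(t^{\bar\l}w_2)$ in the $W_0$-conjugacy class of $\tw_0$, hence in $C$. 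Since $t^{\bar\l}w_2\in{}^{S_0}\tW$, the length of any $x(t^{\bar\l}w_2)\in C$ is $\ell(x)+\ell(t^{\bar\l}w_2)$, so an element of $C$ has minimal length exactly when its label has minimal length in the common $\Ad(w_2)$-twisted conjugacy class; and the minimality of $x_1$ in its $\Ad(w_1)$-twisted conjugacy class is carried to the label of $hy\tw_1y\i h\i$ by the conjugations by $y$ and $h$, which are length-preserving on the parabolics in sight because $y\in{}^{J_{\bar\l}}W_0\,{}^{J_\l}$ and $h\in{}^{I(J_{\bar\l},w_2)}W_{J_{\bar\l}}\,{}^I$ and because $\Ad(t^{\bar\l})$ is trivial on parabolics of $W_{J_{\bar\l}}$. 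Thus $\ell(hy\tw_1y\i h\i)=\ell(\tw_2)$, and since $\tw_2\in\co_{\min}$ we conclude $hy\tw_1y\i h\i\in\co_{\min}$. The main obstacle is exactly this coordination: making precise how $y\tw_1y\i$ sits relative to $C$, how $yw_1y\i$ is matched with $w_2$, and how the ``minimal length in its twisted conjugacy class'' property of $x_1$ survives the two conjugations — which forces the compatibilities among $I(J_e,t^\l w_1)$, $I$, $I(J_{\bar\l},w_2)$ and $I(S_0,t^{\bar\l}w_2)$ and the double-coset-minimality of $y$ and $h$ to be used in tandem.
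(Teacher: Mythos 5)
Your overall skeleton --- reduce everything to Theorem \ref{fact} applied to the chain $I\subset J_{\bar\l}\subset S_0$ with the elements $yw_1y\i$ and $w_2$ --- is the same as the paper's, and your verifications of the routine hypotheses ($I\subset J_{\bar\l}$, the ${}^I$-condition on $yw_1y\i$, the stability $yw_1y\i\, I\, yw_1\i y\i=I$) are fine. But the substantive hypothesis, your item (3), is exactly where the proof lives, and your argument for it does not go through. From $y\tw_1y\i=t^{\bar\l}(yw_1y\i)(yx_1y\i)$ being $W_0$-conjugate to $\tw_2=t^{\bar\l}w_2x_2$, matching translation parts only gives that the \emph{products} $(yw_1y\i)(yx_1y\i)$ and $w_2x_2$ are $W_{J_{\bar\l}}$-conjugate; ``reading off finite parts'' cannot split this into conjugacy of the factors $yw_1y\i$ and $w_2$, because $yw_1y\i$ is not in general the distinguished ${}^{J_{\bar\l}}$-representative of its partial conjugacy class (if it were, the uniqueness in Theorem \ref{partial} would force $yw_1y\i=w_2$ and $h=1$, which is false in general). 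So the claim that $yw_1y\i$ and $w_2$ lie in a common $W_{J_{\bar\l}}$-class of which both are minimal is asserted, not proved.

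The missing idea is the paper's detour through the element $t^\l w_1$ with the tail $x_1$ stripped off. One takes a minimal length element $t^{\bar\l}uc$ (with $u\in{}^{J_{\bar\l}}(W_0\rtimes\G)$, $c\in W_{I(J_{\bar\l},u)}$) in the $W_0$-conjugacy class of $t^\l w_1$ --- a \emph{different} conjugacy class from that of $\tw_1$ --- so that the translation-part argument now legitimately yields $uc\sim_{W_{J_{\bar\l}}}yw_1y\i$. An explicit length computation (via Corollary \ref{formula} and Lemma \ref{property-y}) then forces $c=1$ and $\ell(yw_1y\i)=\ell(u)$, giving hypothesis (3) for the pair $(u,\,yw_1y\i)$, to which Theorem \ref{fact} is applied. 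Only \emph{after} $h$ is constructed does one identify $u=w_2$: conjugation by $h$ places $hy\tw_1y\i h\i$ in $t^{\bar\l}uW_{I(S_0,t^{\bar\l}u)}$, and the uniqueness clause of Theorem \ref{partial} for the class of $\tw_1$ then pins down $t^{\bar\l}u=t^{\bar\l}w_2$. Your proposed order of deduction --- first prove (3) for $w_2$ itself, then invoke Theorem \ref{fact} --- is therefore not available; the identification of the target as $w_2$ is a consequence of the construction of $h$, not a prerequisite for it. Your treatment of (4) is essentially the paper's and would be fine once (1)--(3) are properly established.
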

\begin{rmk}
By Lemma \ref{property-y} (3), $I\subset y(J_{\l}) \subset J_{\bar \l}$. Moreover, we have $y w_1 y\i \in {}^I W_0$ and $y w_1 y\i I y w_1\i y\i=I$ by the construction of $w_1$.
\end{rmk}

\begin{proof}
By Theorem \ref{partial}, there exists a minimal length element in the $W_0$-conjugacy class of $t^{\l}w_1$ of the form $t^{\bar \l}u c$, where $u \in {}^{J_{\bar \l}} (W_0 \rtimes \G)$ and $c \in W_{I({J_{\bar \l}}, u)}$. Again by Theorem \ref{partial}, there exists $c' \in W_{I({J_{\bar \l}}, u)}$ such that $uc'$ is of minimal length in the $W_{J_{\bar \l}}$-conjugacy class of $uc$. Note that $t^{\bar \l}uc$ and $t^{\bar \l}uc'$ are in the same $W_{J_{\bar \l}}$-conjugacy class. So by the choice of $t^{\bar \l}uc$, we have $$\ell(t^{\l})-\ell(u)+\ell(c)=\ell(t^{\bar \l}uc) \le \ell(t^{\bar \l}uc')=\ell(t^{\l})-\ell(u)+\ell(c'),$$ that is, $\ell(c) \le \ell(c')$. Hence $\ell(uc)=\ell(u)+\ell(c) \le \ell(u)+\ell(c')=\ell(uc')$. Therefore

(a) $u c$ is of minimal length in its $W_{J_{\bar \l}}$-conjugacy class.

By Corollary \ref{formula}, $\ell(z\i t^{\l}w_1 z)=\<\nu_\co, 2\rho^\vee\>+\<\l, 2\rho^\vee_{J}\>-\ell(w_1)$. Applying Lemma \ref{property-y}, we have $$\ell(y w_1 y\i)=2\ell(y)+\<\nu_\co, 2\rho^\vee\>+\<\l, 2\rho^\vee_{J}\>-\ell(z\i t^{\l}w_1 z).$$

On the other hand, $\ell(t^{\bar \l}uc)=\<\bar \l, 2\rho^\vee\>-\ell(u)+\ell(c)$. Hence $$\ell(uc)=\<\bar \l, 2\rho^\vee\>+2\ell(c)-\ell(t^{\bar \l}uc).$$ Since $t^{\bar \l}uc$ and $t^{\bar \l}y w_1 y\i$ are in the same $W_0$-conjugacy class, then $uc$ and $y w_1 y\i$ are in the same $W_{J_{\bar \l}}$-conjugacy class. By (a) and Lemma \ref{property-y}, we see that $$0 \le \ell(y w_1 y\i)-\ell(uc)=\ell(t^{\bar \l}uc)-\ell(z\i t^{\l}w_1 z)-2\ell(c).$$ Notice that by our construction, $t^{\bar \l}uc$ is of minimal length in the $W_0$-conjugacy class of $z\i t^{\l}w_1 z$. Hence $c=1$ and $\ell(y w_1 y\i)=\ell(u)$. By (a), both $u$ and $y w_1 y\i$ are of minimal lengths in the their common $W_{J_{\bar \l}}$-conjugacy class.

By Proposition \ref{fact}, there exists $h \in {}^{I({J_{\bar \l}},u)} W_{J_{\bar \l}} {}^I$ such that $u=h y w_1 y\i h\i$ and $h I h\i \subset I({J_{\bar \l}}, u)$. Thus $h y \tw_1 y \i h \i \in t^{\bar \l} u W_{I({J_{\bar \l}}, u)}=t^{\bar \l} u W_{I(S_0, t^{\bar \l} u)}$. The $W_0$-conjugacy class of $\tw_1$ intersects both $t^{\bar \l} u W_{I(S_0, t^{\bar \l} u)}$ and $t^{\bar \l} w_2 W_{I(S_0, t^{\bar \l} w_2)}$. By Theorem \ref{partial}, $w_2=u$.

By definition, $x_1$ is a minimal length element in the $\Ad(w_1)$-twisted conjugacy class by $W_{I(J_e, t^\l w_1)}$. Thus $h y x_1 y \i h \i$ is of minimal length in its $\Ad(w_2)$-twisted onjugacy class by $W_{h I h\i}$. By Lemma \ref{min-in-wj}, $h y x_1 y \i h \i$ is of minimal length in its $\Ad(w_2)$-twisted conjugacy class by $W_{I({J_{\bar \l}}, w_2)}$. Thus by Theorem \ref{partial}, $h y \tw_1 y \i h \i=t^{\bar \l} w_2 (h y x_1 y \i h \i)$ is of minimal length in the $W_0$-conjugacy class of $\tw'$. So $h y \tw_1 y \i h \i \in \co_{\min}$.
\end{proof}

\subsection{} Now we prove Theorem \ref{thm4.1}.

By Lemma \ref{fact''}, $\tw_0$ and $\tw_1$ are of minimal length (with respect to $\ell_{J}$) in their $\tW_{J}$-conjugacy class. Hence by $\S$\ref{min-tw}, \[\tag{a} T^{J}_{\tw_0} \equiv T_{\tw_1}^{J}=\th_\l T_{w_1 \i} \i T_{x_1} \mod [\ch_J, \ch_J].\]

Let $x'=y x_1 y\i \in W_I \subset W_{J_{\bar \l}}$ and $x''=hx'h\i \in W_{h I h\i}$. We show that
\[\tag{b} \th_{\l}T_{w_1\i}\i T_{x_1} \equiv \th_{\bar \l} T_{y w_1\i y\i}\i T_{x'} \mod[\ch, \ch].\]

Let $y=s_r \cdots s_1$ be a reduced expression. For each $k$, let $\a_k$ be the positive simple root corresponding to $s_k$ and let $\l_k=s_k \cdots s_1(\l)$. Since $y s_1 \cdots s_{k-1}(\a_k)<0$, then $$\<\l_{k-1}, \a_k^\vee\>=\<\l, s_1 \cdots s_{k-1}(\a_k^\vee)\><0.$$ By Lemma \ref{ineq'}(2), $\<\l_{k-1}, \a_k^\vee\>=-1$. By $\S$\ref{th} (6), $T_{s_k} \th_{\l_{k-1}}=\th_{\l_k}T_{s_k}\i$. Applying it successively, we have that $$T_y \th_{\l}=T_{s_r} \cdots T_{s_1} \th_{\l}=\th_{y(\l)}T_{s_r}\i \cdots T_{s_1}\i=\th_{\bar \l}T_{y\i}\i.$$

Since $y \in {}^{J_{\bar \l}} W_0$, we have $\ell(x'y)=\ell(yx_1)=\ell(x')+\ell(y)=\ell(y)+\ell(x_1)$. By Lemma \ref{property-y}, $\ell(y w_1 \i y \i)=2 \ell(y)+\ell(w_1)$. Hence $T_y T_{x_1} T_y \i=T_{x'}$ and $T_y T_{w_1 \i} T_{y \i}=T_{y w_1 \i y \i}$. Therefore \begin{align*} T_y \th_{\l} T_{w_1\i}\i T_{x_1} T_y\i&=\th_{\bar \l} T_{y \i} \i T_{w_1 \i} \i T_{x_1} T_y \i=\th_{\bar \l} (T_{y \i} \i T_{w_1 \i} \i T_y \i) (T_y T_{x_1} T_y \i) \\ &=\th_{\bar \l} T_{y w_1\i y\i}\i T_{x'}.\end{align*} (b) is proved.

Notice that $h \in W_{J_{\bar \l}}$. By $\S$\ref{th} (5), $T_h \th_{\bar \l} T_h \i=\th_{\bar \l}$. By Proposition \ref{hh},  $$\ell(y w_1 y\i h\i)=\ell(h\i w_2)=\ell(h\i)+\ell(w_2)=\ell(y w_1 y\i)+\ell(h\i).$$ Thus $T_h T_{y w_1 \i y \i}\i T_h \i=T_{w_2\i}\i$. Since $h \in {}^{I(J_{\bar \l},w_2)} W_{J_{\bar \l}} {}^I$ and $h(I) \subset I(J_{\bar \l}, w_2)$, we have that $\ell(x''h)=\ell(h x')=\ell(h)+\ell(x')=\ell(x'')+\ell(h)$ and $T_h T_{x'} T_h \i=T_{x''}$. So

\begin{align*}T_h \th_{\bar \l} T_{y w_1\i y\i} T_{x'} T_h\i &=\th_{\bar \l} (T_h T_{y w_1\i y\i}\i T_h\i) (T_h T_{x'} T_h\i) \\ &=\th_{\bar \l} T_{w_2\i}\i T_{x''}.\end{align*}

By Proposition \ref{hh}, $h y \tw_1 y \i h \i=t^{\bar \l} w_2 x''$ and $\tw'$ are both of minimal lengths in $\co$. By Theorem \ref{min} and \S\ref{min-tw}, \[\tag{c} T_{\tw'} \equiv T_{t^{\bar \l} w_2 x''}=\th_{\bar \l} T_{w_2\i}\i T_{x''} \equiv \th_{\bar \l} T_{y w_1\i y\i}\i T_{x'}  \mod [\ch, \ch].\]

Combining (a), (b) and (c), $$T_{w_\co} \equiv T_{\tw_0}^{J} \mod [\ch, \ch].$$

\begin{example} \label{eg} Let's consider the extended affine Weyl group $\tW$ associated to $GL_8$. Here $\tW \cong \ZZ^8 \rtimes \fs_8$, where the permutation group $\fs_8$ of $\{1, 2, \cdots, 8\}$ acts on $\ZZ^8$ in a natural way. Let $\tu=t^{\chi}\s$ with $\chi=[\chi_1, \cdots, \chi_8]$ and $\s \in \fs_8$. Then $$\ell(\tu)=\sum_{i<j,\s(i)<\s(j)}|\l_i-\l_j|+\sum_{i<j,\s(i)>\s(j)}|\l_i-\l_j-1|.$$

Take $\chi=[1,1,1,1,1,0,0,0] \in \ZZ^8$ and $x=(6,3,1)(7,4,8,5,2) \in \fs_8$. Then $\tw'=t^{\chi} x \in {}^{S_0} \tW$ is an minimal length element in its conjugacy class. 

Let $J=\{(1,2),(2,3),(4,5), (6,7), (7,8)\} \subset S_0$ and $\tw \in \tW_J=\ZZ^8 \rtimes W_J$ with $\l=[1,1,0,1,1,1,0,0]$ and $w=(3,2,1)(7,5,8,6,4)$. Then $\ell_J(\tw)=0$. In particular, $\tw$ is of minimal length (in the sense of $\ell_J$) in its conjugacy class of $\tW_J$.

By Theorem \ref{thm4.1}, $$T_{\tw'} \equiv T^J_{\tw}=\th_{\l}T_{w\i}\i \mod [\ch, \ch].$$

\end{example}

\subsection{}
We call an element $w \in W_0 \rtimes \G$ {\it elliptic} if $V^w \subset V^{W_0}$ and an element $\tw \in \tW$ {\it elliptic} if $p(\tw)$ is elliptic in $W_0 \rtimes \G$. By definition, if $\tw$ is elliptic, then $\nu_{\tw} \in V^{W_0}$.

A conjugacy class $\co$ in $W_0 \rtimes \G$ or $\tW$ is called {\it elliptic} if $\tw$ is elliptic for some (or, equivalently any) $\tw \in \co$.

Now we discuss the choice of $v$ in $\S$\ref{def-tw0}. If we assume furthermore that $v$ is a regular point of $V^{p(\tw')}$, then $\bar v=z(v)$ is a regular point of $V^{p(\tw_0)}$. Thus $V^{p(\tw_0)} \subset \cap_{\a \in R_J} H_{\a, 0}=V^{W_J}$. Hence $\tw_0$ is elliptic in $\tW_J$.

\subsection{} Let $\co$ be a conjugacy class and $\tw, \tw' \in \co$ with $\nu_{\tw}=\nu_{\tw'}=\nu_\co$. Let $x \in \tW$ such that $x \tw x \i=\tw'$. Then $x \in \tW_{J_\co}$. In particular, the set $\{\tw \in \co \cap \tW_{J_{\nu_\co}}; \nu_{\tw}=\nu_\co\}$ is a single $\tW_{J_{\nu_\co}}$-conjugacy class.

Let $\ca$ be the set of pairs $(J, C)$, where $J \subset S_0$, $C$ is an elliptic conjugacy class of $\tW_J$ and $\nu_{\tw}$ is dominant for some (or, equivalently any) $\tw \in C$. For any $(J, C), (J', C') \in \ca$, we write $(J, C) \sim (J', C')$ if $\nu_{\tw}=\nu_{\tw'}$ for $\tw \in C$ and $\tw' \in C'$ and there exists $x \in {}^{J'} (W_{J_{\nu_{\tw}}} \rtimes \G_{J_{\nu_{\tw}}})^J$ such that $x J x \i=J'$ and $x C x \i=C'$.

\begin{lem}
The map from $\ca$ to the set of conjugacy classes of $\tW$ sending $(J, C)$ to the unique conjugacy class $\co$ of $\tW$ with $C \subset \co$ gives a bijection from $\ca/\sim$ to the set of conjugacy classes of $\tW$.
\end{lem}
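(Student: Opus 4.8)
The plan is to prove that the stated map $\ca/\sim \;\to\; \{\text{conjugacy classes of }\tW\}$ is well-defined, surjective, and injective, by assembling the structural results already established in the paper (Theorem~\ref{thm4.1} and its proof, together with the length formulas of Section~3).

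First I would check that the map is well-defined on $\ca$ and descends to $\ca/\sim$. Given $(J,C)\in\ca$, an elliptic conjugacy class $C$ of $\tW_J$ is contained in a unique conjugacy class $\co$ of $\tW$ simply because conjugacy in $\tW_J$ is a refinement of conjugacy in $\tW$; thus $(J,C)\mapsto\co$ is a genuine map $\ca\to\{\text{conj.\ classes}\}$. To see it factors through $\sim$, observe that if $(J,C)\sim(J',C')$ via $x\in{}^{J'}(W_{J_{\nu}}\rtimes\G_{J_{\nu}})^J$ with $xJx\i=J'$ and $xCx\i=C'$, then $C$ and $C'$ are conjugate already in $\tW$, so they lie in the same $\tW$-conjugacy class. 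Hence the map $\ca/\sim\;\to\;\{\text{conj.\ classes}\}$ is defined.

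Next, surjectivity. Given a conjugacy class $\co$ of $\tW$, I would invoke the construction of $\S$\ref{def-tw0}: there is $J\subset S_0$ (taken minimal, e.g.\ $J=J_{\nu_\co}\cap J_{\bar v}$ with $v$ chosen to be a regular point of $V^{p(\tw')}$ as in the paragraph following Example~\ref{eg}) and an element $\tw_0\in\co\cap\tW_J$ which is elliptic in $\tW_J$. Replacing $\tw_0$ by a $\tW_{J}$-conjugate with $\nu_{\tw_0}$ dominant — possible since the dominant representative in the $W_J$-orbit of $\nu_{\tw_0}$ is reached by conjugating inside $W_{J_{\nu_{\tw_0}}}\subset W_J$ — we obtain a pair $(J,C)\in\ca$ with $C$ the $\tW_J$-conjugacy class of $\tw_0$, and $C\subset\co$. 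So $\co$ is in the image.

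The main obstacle is injectivity, which is where the real content lies. Suppose $(J,C)$ and $(J',C')$ both map to $\co$; I must produce an element $x\in{}^{J'}(W_{J_{\nu_\co}}\rtimes\G_{J_{\nu_\co}})^J$ conjugating $(J,C)$ to $(J',C')$. Pick $\tw\in C$ and $\tw'\in C'$ with $\nu_{\tw}=\nu_{\tw'}=\nu_\co$ (using the paragraph before the lemma, which says $\{\tw\in\co\cap\tW_{J_{\nu_\co}};\nu_{\tw}=\nu_\co\}$ is a single $\tW_{J_{\nu_\co}}$-conjugacy class — in particular both $\tw$ and $\tw'$ lie in $\tW_{J_{\nu_\co}}$ and are conjugate by some element of $\tW_{J_{\nu_\co}}=W_{J_{\nu_\co}}\rtimes\G_{J_{\nu_\co}}$ after translating $\nu_\co$ away; more precisely one applies the cited statement, whose proof is that any $x$ with $x\tw x\i=\tw'$ fixes $\nu_\co$ hence lies in $\tW_{J_{\nu_\co}}$). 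Let $g\in W_{J_{\nu_\co}}\rtimes\G_{J_{\nu_\co}}$ with $g\tw g\i=\tw'$. The issue is that $g$ need not lie in ${}^{J'}(\cdots)^J$ and need not carry $J$ to $J'$: the subgroups $J,J'\subset S_0$ recording the ``support'' of the elliptic pieces are not a priori matched by $g$. To fix this I would argue that, since $\tw$ is elliptic in $\tW_J$, the set $R_J$ is intrinsic to $\tw$ — indeed ellipticity forces $V^{p(\tw)}\cap V_{\tw}$-type data to pin down $R_{J_e}$ and ultimately $R_J$ as (roughly) the span of the roots $\a$ with $\<\cdot,\a^\vee\>$ bounded on the relevant affine subspace, exactly as in $\S$\ref{def-tw1}(3); hence $g R_J g\i=R_{J'}$ automatically, giving $gJg\i=J'$ after adjusting $g$ on the left by $W_{J'}$ (which does not change the conjugacy class $C'$). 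Then write $g=x$ as its minimal-length double-coset representative in ${}^{J'}(W_{J_{\nu_\co}}\rtimes\G_{J_{\nu_\co}})^J$, absorbing the $W_{J'}$- and $W_J$-parts into $C'$ and $C$ respectively (they conjugate $C,C'$ to themselves), which yields $(J,C)\sim(J',C')$. The delicate point, and the step I expect to take the most care, is showing that the left/right $W_{J'}$, $W_J$ adjustments genuinely fix $C'$ and $C$ and that the normalization producing $x\in{}^{J'}(\cdots)^J$ is consistent with $xJx\i=J'$ — this is a compatibility check between the double-coset reduction and the requirement $xCx\i=C'$, and it is essentially the same bookkeeping that appears in the proof of Proposition~\ref{hh} and Theorem~\ref{fact}.
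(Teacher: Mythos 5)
Your overall strategy coincides with the paper's: well-definedness is the easy direction of $\sim$, surjectivity comes from the construction of $\S$\ref{def-tw0} with $v$ chosen regular in $V^{p(\tw')}$ (the paper leaves this implicit), and injectivity reduces to showing that a conjugator $x\in W_{J_{\nu_\co}}\rtimes\G_{J_{\nu_\co}}$ with $x\tw x\i\in C'$ can be normalized into ${}^{J'}(W_{J_{\nu_\co}}\rtimes\G_{J_{\nu_\co}})^{J}$ so that it carries $J$ onto $J'$. You correctly isolate this last point as the crux, and your double-coset bookkeeping at the end (absorbing the $W_{J'}$- and $W_J$-factors into $C'$ and $C$, and using that the minimal representative of a coset normalizing $W_J$ to $W_{J'}$ sends $J$ to $J'$) is sound.

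The gap is in the step you dismiss as ``automatic'': the claim that $g R_J g\i=R_{J'}$ because ellipticity makes $R_J$ intrinsic to $\tw$. The paper does not derive this from the constructions of $\S$\ref{def-tw1} (your appeal to $R_{J_e}$ and to roots bounded on an affine subspace is not the relevant mechanism here); it invokes \cite[Proposition 5.2]{CH}, which is precisely the statement that if $p(\tw)$ is elliptic in $W_J\rtimes\G_J$ and $p(x\tw x\i)$ is elliptic in $W_{J'}\rtimes\G_{J'}$, then $x=x'x_1$ with $x'\in W_{J'}$ and $x_1\in{}^{J'}(\cdots)^{J}$ satisfying $x_1Jx_1\i=J'$. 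Your heuristic does give a complete argument when $\G_J$ is trivial: there $p(\tw)\in W_J$ fixes $V^{W_J}$ pointwise, so ellipticity forces $V^{p(\tw)}=V^{W_J}$, and $R_J$ is exactly the set of roots whose coroots vanish on $V^{p(\tw)}$, which is visibly preserved by $g$. But in the twisted case one only has $(V^{W_J})^{\d}\subset V^{p(\tw)}\subset V^{W_J}$, so $V^{p(\tw)}$ need not determine $V^{W_J}$, and ``the roots whose coroots vanish on $V^{p(\tw)}$'' may strictly contain $R_J$. So as written the key assertion is unproven in exactly the generality the lemma requires; you need either to cite \cite[Proposition 5.2]{CH} as the paper does, or to supply an argument that works for $W_J\rtimes\G_J$ with $\G_J\neq 1$.
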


\begin{proof}
If $(J, C) \sim (J', C')$, then $C$ and $C'$ are in the same conjugacy class of $\tW$. On the other hand, suppose that $C$ and $C'$ are in the same conjugacy class $\co$. Let $\tw \in C$ and $J_{\nu_{\tw}}$. Then $\nu_{\tw} \in V^{W_J}$ and $J \subset J_{\nu_{\tw}}$. Similarly, $J' \subset J_{\nu_{\tw}}$. Then $C, C' \subset \{\tw_1 \in \co \cap \tW_{J_{\nu_{\tw}}}; \nu_{\tw_1}=\nu_\co\}$ is in the same $\tW_K$-conjugacy class. In particular, there exists $x \in W_{J_{\nu_{\tw}}} \rtimes \G_{J_{\nu_{\tw}}}$ such that $x \tw x \i \in C'$. Hence $p(x \tw x \i)$ is an elliptic element in $W_{J'} \rtimes \G_{J'}$. By \cite[Proposition 5.2]{CH}, $x=x' x_1$ for some $x' \in W_{J'}$, $x_1 \in {}^{J'} (W_{J_{\nu_{\tw}}} \rtimes \G_{J_{\nu_{\tw}}})^J$ such that $x_1 J x_1 \i=J'$. Hence $x_1 \tW_J x_1 \i=\tW_{J'}$ and $x_1 C x_1 \i=C'$.
\end{proof}

Now combining Theorem \ref{thm4.1} and Theorem \ref{IMbasis}, we have

\begin{thm}
(1) The elements $\{T^J_\co\}_{(J, \co) \in \ca/\sim}$ span $\bar \ch$ as an $\ca$-module.

(2) If $q_s^{\frac{1}{2}}=q_t^{\frac{1}{2}}$ for all $s, t \in S$, then $\{T^J_\co\}_{(J, \co) \in \ca/\sim}$ is a basis of $\bar \ch$.
\end{thm}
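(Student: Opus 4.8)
The plan is to combine the two main inputs already established, namely Theorem~\ref{thm4.1} and Theorem~\ref{IMbasis}, using the bijection $\ca/\!\sim\ \to\{\text{conjugacy classes of }\tW\}$ proved in the preceding lemma. First I would unwind the definition of the elements $T^J_\co$ appearing in the statement: for a pair $(J,\co)\in\ca$, $\co$ is an elliptic conjugacy class of $\tW_J$ with dominant Newton point, and $T^J_\co$ denotes the image in $\bar\ch$ of $T^J_y$ for any minimal length $y\in\co$ (with respect to $\ell_J$); by Theorem~\ref{IMbasis}(1) applied to $\ch_J$ together with $\S\ref{min-tw}$, this is well-defined, and by the way $\ch_J$ sits inside $\ch$, $T^J_y$ is a genuine element of $\ch_J\subset\ch$, so its image in $\bar\ch$ makes sense. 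I would also note that $T^J_\co$ depends only on the class of $(J,\co)$ in $\ca/\!\sim$: if $(J,\co)\sim(J',\co')$ via conjugation by $x\in{}^{J'}(W_{J_{\nu}}\rtimes\G_{J_\nu})^J$, then $x\tW_J x\i=\tW_{J'}$, $x\co x\i=\co'$, and conjugation by $T_x$ in $\ch$ (which is invertible) identifies $T^J_y$ with $T^{J'}_{xyx\i}$ modulo $[\ch,\ch]$, using that $\ell$ behaves well under this conjugation — essentially the same bookkeeping as in the proof of Theorem~\ref{thm4.1}.

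Next, for the spanning statement (1): given any conjugacy class $\co$ of $\tW$ with minimal length representative $w_\co$, Theorem~\ref{thm4.1} gives a subset $J\subset S_0$ and an element $\tw_0\in\co\cap\tW_J$, of minimal length in its $\tW_J$-conjugacy class, with $T_{w_\co}\equiv T^J_{\tw_0}\bmod[\ch,\ch]$. After the further choice of $v$ regular in $V^{p(\tw')}$ discussed in $\S$ following Example~\ref{eg}, we may take $\tw_0$ elliptic in $\tW_J$, and since $\nu_{\tw_0}=\nu_\co$ is dominant, the $\tW_J$-conjugacy class $C$ of $\tw_0$ gives a pair $(J,C)\in\ca$ with $C\subset\co$, so $T^J_{\tw_0}=T^J_C$. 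Hence every $T_\co$ equals some $T^{J}_C$ in $\bar\ch$, and since $\{T_\co\}$ spans $\bar\ch$ by Theorem~\ref{IMbasis}(1), so does $\{T^J_\co\}_{(J,\co)\in\ca/\sim}$.

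For (2), assume equal parameters. By the lemma, the assignment $(J,\co)\mapsto\co^{\tW}$ (the $\tW$-class containing $\co$) is a bijection $\ca/\!\sim\ \to\{\text{conjugacy classes of }\tW\}$, and by step~2 the element $T^J_\co$ maps to $T_{\co^{\tW}}$ in $\bar\ch$. Thus the family $\{T^J_\co\}_{(J,\co)\in\ca/\sim}$ is, up to this bijection, exactly the family $\{T_\co\}$, which by Theorem~\ref{IMbasis}(2) is a basis of $\bar\ch$. Therefore $\{T^J_\co\}_{(J,\co)\in\ca/\sim}$ is a basis as well. The only subtlety to be careful about is that the index sets match up cleanly: each $(J,\co)\in\ca/\!\sim$ yields precisely one $\tW$-class and, conversely, by the surjectivity/injectivity in the lemma, distinct classes in $\ca/\!\sim$ yield distinct $T$'s in $\bar\ch$; I expect this identification of index sets — rather than any new algebra — to be the only place requiring genuine care, and it is already packaged in the preceding lemma.
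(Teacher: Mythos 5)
Your proposal is correct and follows exactly the route the paper takes (the paper simply states the theorem as a combination of Theorem \ref{thm4.1}, Theorem \ref{IMbasis} and the preceding lemma, leaving the bookkeeping implicit). Your spelled-out version — identifying each $T^J_\co$ with $T_{\co^{\tW}}$ in $\bar\ch$ via Theorem \ref{thm4.1} and transporting spanning/independence across the bijection $\ca/\!\sim\ \to\{\text{conjugacy classes}\}$ — is precisely the intended argument.
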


This gives Bernstein presentation of the cocenter $\bar \ch$.

\section{$P$-alcove elements and the cocenter of $\ch$}
\subsection{} For any $\a \in R$ and an alcove $C$, let $k(\a, C)$ be the unique integer $k$ such that $C$ lies in the region between the hyperplanes $H_{\a, k}$ and $H_{\a, k-1}$. For any alcoves $C$ and $C'$, we say that $C \ge_\a C'$ if $k(\a, C) \ge k(\a, C')$.

Let $J \subset S_0$ and $z \in W_0$. Following \cite[\S 4.1]{GHN}, we say an element $\tw \in \tW$ is a {\it $(J, z)$-alcove element}\footnote{In fact, for $\tw \in X \rtimes W_0$ and $\d \in \G$, $\tw \d$ is a $(J, z)$-alcove element if and only if $\tw C_0$ is a $(J, z\i, \d)$-alcove in \cite[\S 4.1]{GHN}. This is a generalization of the $P$-alcove introduced in \cite{GHKR}.} if

(1) $z \tw z\i \in \tW_J$ and

(2) $\tw C_0 \geqslant_{\a} C_0$ for all $\a \in z\i(R^+ - R_J^+)$.

Note that if $\tw$ is a $(J, z)$-alcove element, then it is also a $(J, u z)$-alcove element for any $u \in W_J$.

If $\tw$ is a $(J, z)$-alcove element, we may also call $\tw$ a $P$-alcove element, where $P=z \i P_J z$ is a semistandard parabolic subgroup of the connected reductive group $G$ associated to the root datum $\fR$.


\begin{lem}\label{red}
Let $\tw \in \tW$ be a $(J, z)$-alcove and let $s \in S$.

(1) If $\ell(\tw)=\ell(s \tw s)$, then $s \tw s$ is a $(J, z p(s))$-alcove element;

(2) If $\tw > s \tw s$, then $z p(s) z\i \in W_J$. Moreover, both $s \tw$ and $s \tw s$ are $(J, z)$-alcove elements.
\end{lem}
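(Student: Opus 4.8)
The plan is to work directly with the two defining conditions of a $(J,z)$-alcove element and to track how conjugation by $s$ affects each of them, using Lemma~\ref{zs} to control the behavior of $z$ and $zp(s)$ relative to $\tW_J$. For part (1), condition~(1) for $s\tw s$ (namely $z p(s)\,(s\tw s)\,p(s)\i z\i\in\tW_J$) is immediate from $z\tw z\i\in\tW_J$, since $zp(s)(s\tw s)p(s)\i z\i = (zsz\i)(z\tw z\i)(zsz\i)\i$ and we must check this lies in $\tW_J$; when $t=zsz\i\in\tW_J$ this is clear, and when $t\notin\tW_J$ Lemma~\ref{zs}(2) rewrites $zs=xz'$ with $x\in\tW_J$ of $\ell_J$-length $0$ and $z'\in{}^JW_0$, so the conjugate is $x(z'\tw z'\i)x\i$; here one needs separately that $z'\tw z'\i\in\tW_J$, which follows because $z'=u z p(s)$ for $u\in W_J$, so it suffices to know $zp(s)\,\tw'\,p(s)\i z\i\in\tW_J$ where $\tw'=s\tw s$ — i.e. this is genuinely a statement about $s\tw s$ and has to be extracted, but it comes down to the same hyperplane bookkeeping. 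The second condition, $\tw'C_0\ge_\a C_0$ for $\a\in (zp(s))\i(R^+-R_J^+)$, is where the $\ell(\tw)=\ell(s\tw s)$ hypothesis enters: length preservation under $w\mapsto sws$ constrains which reflection hyperplane $H$ of $s$ separates $C_0,\tw C_0$ versus $\tw C_0, \tw s C_0$, and one translates the inequality $\tw C_0\ge_\a C_0$ for the old index set into the inequality for the new index set $(zp(s))\i(R^+-R_J^+)$, which differs from $z\i(R^+-R_J^+)$ only by the reflection $p(s)\i$ in the relevant wall.

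For part (2), assume $\tw>s\tw s$. First I would show $zp(s)z\i\in W_J$: if not, then $t:=zsz\i\notin\tW_J$, and by Lemma~\ref{zs}(2) $zs=xz'$ with $x\in\tW_J$, $\ell_J(x)=0$; in the proof of that lemma $t=t^{\th}s_\th$ for a maximal coroot with $z(\th)\notin R_J$, so $s=t^{\th}s_\th\in S$ (an affine reflection). The hyperplane $H$ of $s$ is then $H_{\th,1}$ (up to sign), and $\tw>s\tw s$ combined with the alcove condition $\tw C_0\ge_\a C_0$ for $\a=z\i(\text{something in }R^+-R_J^+)$ — note $z\i$ applied to a root outside $R_J^+$ can be taken to be $\pm\th$ since $z(\th)\notin R_J$ — should force a contradiction with $\tw C_0\ge_{\th} C_0$ or its sign variant, because $\tw>s\tw s$ says $H$ separates $C_0$ from $\tw C_0$ in a way incompatible with the alcove inequality. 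This is the step I expect to be the main obstacle: pinning down exactly which sign of $\th$ appears in $z\i(R^+-R_J^+)$, matching it against the direction of the inequality $k(\th,\tw C_0)$ versus $k(\th, C_0)$, and checking that $\tw>s\tw s$ (rather than $\tw<s\tw s$) is the case that yields the contradiction — the signs have to be chased carefully and it is easy to get them backwards. Granting $zp(s)z\i\in W_J$, we have $t=zsz\i\in W_J\subset\tW_J$, so condition~(1) for $s\tw$ and for $s\tw s$ is automatic (conjugating the element $z\tw z\i\in\tW_J$ by $t\in W_J$, or left-multiplying, stays in $\tW_J$); and since $p(s)\in z\i W_J z$ we get $(zp(s))\i(R^+-R_J^+)=p(s)\i z\i(R^+-R_J^+)$, but $p(s)\i$ fixes $R^+-R_J^+$ setwise... actually $z p(s) z\i\in W_J$ means $z\i(R^+-R_J^+)$ is already $p(s)$-stable, so the index set for $s\tw s$ equals that for $\tw$, and one only needs $s\tw C_0\ge_\a C_0$ and $s\tw s C_0\ge_\a C_0$ for $\a\in z\i(R^+-R_J^+)$. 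Because $\tw>s\tw s$, the wall $H$ of $s$ separates $C_0$ from $\tw C_0$; for $\a$ in the relevant set $H$ is not of the form $H_{\a,k}$ (as $H=H_{\th,1}$ with $z(\th)\notin R_J$ forces... wait, no — here $\th$ with $z(\th)\notin R_J$ is precisely the problematic case we already excluded, so in the surviving case $s\in S_0$ or $z(\th)\in R_J$), so applying $s$ or the reflection $s_H$ changes $k(\a,\cdot)$ in a controlled direction, and the inequality $k(\a,\tw C_0)\ge k(\a,C_0)$ descends to $s\tw C_0$ and $s\tw s C_0$. I would write this last part as a short hyperplane-counting argument once the sign issue in the first part is settled.

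Throughout I would rely on: the length formula and hyperplane description of $\ell$ from~\S\ref{length}; Lemma~\ref{zs} for the dichotomy on $t=zsz\i$; and the elementary fact that $\ell(\tw)=\ell(s\tw s)$ together with $\tw\ne s\tw s$ forces exactly one of $s\tw,\tw s$ to be longer and the other shorter than $\tw$, which pins down the separating hyperplane. The whole argument is a finite, if fiddly, case analysis — (1) vs (2), and within each the cases $t\in\tW_J$ vs $t\notin\tW_J$ — and the only conceptually delicate point is the sign/direction matching in part~(2) establishing $zp(s)z\i\in W_J$.
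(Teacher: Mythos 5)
The crucial step of part (2) --- showing $z p(s) z\i \in W_J$ --- is where your proposal has a genuine gap, and the route you sketch cannot be repaired as stated. You propose to derive a contradiction from a \emph{single} root: the wall $H$ of $s$ separates $C_0$ from $\tw C_0$ (because $\tw > s\tw s$), and you hope this is ``incompatible with the alcove inequality'' $\tw C_0 \ge_\a C_0$ for the corresponding $\a \in z\i(R^+-R_J^+)$. But it is not: ``$H$ separates $C_0$ from $\tw C_0$'' is a symmetric statement that says nothing about \emph{which side} of $H$ the alcove $\tw C_0$ lies on, so it is perfectly consistent with $k(\a,\tw C_0) > k(\a, C_0)$. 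No amount of sign-chasing on one root will produce a contradiction. The paper's argument instead uses \emph{both} hyperplanes $H$ and $\tw H$ and \emph{both} roots $\a$ and $p(\tw)(\a)$ (note that $p(\tw)(\a) \in z\i(R^+-R_J^+)$ as well, since $z\tw z\i \in \tW_J$ and $W_J \rtimes \G_J$ stabilizes $R^+-R_J^+$): from $\tw > s\tw s$ one gets $H, \tw H \in \fH(C_0, \tw C_0)$; the alcove condition then forces the \emph{strict} inequalities $\tw C_0 >_\a C_0$ and $\tw C_0 >_{p(\tw)(\a)} C_0$; applying $\tw$ to the first gives $\tw^2 C_0 >_{p(\tw)(\a)} \tw C_0$; but $\tw H$ separates $\tw C_0$ both from $C_0$ and from $\tw^2 C_0$, so $C_0$ and $\tw^2 C_0$ lie on the same side of $\tw H$, which contradicts the last two strict inequalities. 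This two-hyperplane interplay is the missing idea.

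Two smaller points. First, your case analysis for this step only treats the affine simple reflection $s=t^{\th}s_{\th}$; the case $s \in S_0$ with $z s z\i \notin W_J$ must be excluded by the same argument (the paper's treatment is uniform in $H=H_{\a,k}$, whether $k=0$ or not, and does not pass through Lemma \ref{zs} at all here). Second, for part (1) and for the ``moreover'' assertion of part (2) the paper simply cites \cite[Lemma 4.4.3]{GHN} and \cite[Lemma 4.4.2]{GHN}; your sketches for these (``the same hyperplane bookkeeping'', ``a short hyperplane-counting argument'') point in a reasonable direction but are not proofs, and the descent of the inequality to $s\tw C_0$ and $s\tw s C_0$ again requires tracking the image wall $\tw H$ and not only $H$.
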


\begin{rmk}
In part (2), $s \tw$ and $s \tw s$ are also $(J, z p(s))$-alcove elements.
\end{rmk}
\begin{proof}

Part (1) is proved in \cite[Lemma 4.4.3]{GHN}.

Assume $\tw>s \tw s$ and $s=s_H$ is the reflection along $H=H_{\a,k} \in \fH$ for some $\a \in R$ and $k \in \ZZ$. By replacing $\a$ by $-\a$ if necessary, we can assume that $z(\a) \in R^+$. If $z(\a) \notin R_J$, then $\a, p(\tw)(\a) \in z\i(R^+-R_J^+)$. Note that $\tw > s \tw s$, so $H, \tw H \in \fH(C_0, \tw C_0)$. Hence $\tw C_0 >_{\a} C_0$ and $\tw C_0 >_{p(\tw)(\a)} C_0$ since $\tw$ is a $(J, z)$-alcove. Applying $\tw$ to the first inequality we have $\tw^2 C_0 >_{p(\tw)(\a)} \tw C_0$. Hence both $C_0$ and $\tw^2 C_0$ are separated from $\tw C_0$ by $\tw H$. In other words, $C_0$ and $\tw^2 C_0$ are on the same side of $\tw H$. So $\tw C_0 >_{\a} C_0$ and $\tw^2 C_0 >_{p(\tw)(\a)} \tw C_0$ can't happen at the same time. That is a contradiction. The ``moreover" part  follows from \cite[Lemma 4.4.2]{GHN}.
\end{proof}

\begin{thm}
Let $\tw \in \tW$, $J \subset S_0$ and $z \in {}^J W_0$ such that $\tw$ is a $(J, z)$-alcove. Then $$T_{\tw} \in \ch_J + [\ch, \ch].$$
\end{thm}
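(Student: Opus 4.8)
The plan is to induct on the length $\ell(\tw)$, using Lemma~\ref{red} to perform a controlled reduction that either shrinks the length or lands us in a situation governed by Theorem~\ref{thm4.1}. More precisely, I would argue as follows. Fix $J$ and $z \in {}^J W_0$ with $\tw$ a $(J,z)$-alcove. Run the ``to'' process within $W$: find $s \in S$ with $s\tw s \to \tw$, i.e. either $\ell(s\tw s) < \ell(\tw)$ or $\ell(s\tw s)=\ell(\tw)$. In the equal-length case, $s\tw s$ is a $(J, z p(s))$-alcove by Lemma~\ref{red}(1); replacing $z p(s)$ by its minimal representative in $W_J z p(s)$ (which does not change the alcove condition since $\tw$ is a $(J,uz)$-alcove for all $u\in W_J$), we may keep iterating. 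Since $T_{\tw} \equiv T_{s\tw s} \bmod [\ch,\ch]$ when lengths agree, and $T_s T_{\tw} T_s\i = T_{s\tw s} + (q_s^{1/2}-q_s^{-1/2})(\cdots)$-type corrections vanish modulo $[\ch,\ch]$, conjugation by $T_s$ preserves the class in $\bar\ch$; so without loss of generality we reach $\tw$ which is of minimal length in its $W$-conjugacy class (or rather, cannot be shortened by any such $s$).

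Next, in the length-dropping case $\tw > s\tw s$, Lemma~\ref{red}(2) tells us $z p(s) z\i \in W_J$, and both $s\tw$ and $s\tw s$ are $(J,z)$-alcoves (hence so is $s\tw = (s\tw s)s$ with the relevant $z$). The key algebraic point is then the standard factorization modulo the commutator: $T_{\tw} = T_s T_{s\tw}$, and since $T_{s\tw}T_s \equiv T_s T_{s\tw} = T_{\tw}$ is false in general, one instead writes $T_{\tw} \equiv T_{s\tw s} \cdot (\text{terms})$, exploiting $(T_s - q_s^{1/2})(T_s+q_s^{-1/2})=0$ to expand $T_s T_{s\tw}$ and observe that both resulting terms $q_s^{\pm 1/2} T_{s\tw s}$-type and the lower term $T_{s\tw}$ are $(J,z)$-alcoves of strictly smaller length. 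By induction each lies in $\ch_J + [\ch,\ch]$, and since $\ch_J + [\ch,\ch]$ is an $\ca$-submodule, so does $T_{\tw}$. The base case of the induction is when no $s$ strictly shortens $\tw$ via $s\tw s$: then $\tw$ is of minimal length in its $W$-conjugacy class, hence (after the equal-length moves, which preserve both the $\bar\ch$-class and the alcove property) we may take $\tw \in \co_{\min}$ for its conjugacy class $\co$, and Theorem~\ref{thm4.1} gives $T_{\tw} \equiv T^{J'}_{\tw_0} \bmod [\ch,\ch]$ for the $J' = J_{\nu_\co}\cap J_{\bar v}$ produced there; one checks $J' \subset J$ (using that the $(J,z)$-alcove condition forces $\nu_\co$ and $v$ to be $J$-related after conjugating by $z$), so $T^{J'}_{\tw_0} \in \ch_{J'} \subset \ch_J$, finishing the base case.

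The main obstacle I anticipate is bookkeeping the compatibility of the various $z$'s and $J$'s through the reduction, and in particular verifying in the base case that the set $J_{\nu_\co}\cap J_{\bar v}$ furnished by Theorem~\ref{thm4.1} is actually contained in (a $W_0$-conjugate lying inside) $J$. The alcove inequalities $\tw C_0 \ge_\a C_0$ for $\a \in z\i(R^+ - R_J^+)$ must be translated into statements about the Newton point $\nu_\co$ and the vector $v$ controlling $V_{\tw}$: roughly, these inequalities say $\langle \nu_\co, z(\a)^\vee\rangle \ge 0$ with equality forcing $z(\a) \in R_J$, which pins down $J_{\nu_{z\tw z\i}} \subset J$ and similarly for the translation-direction datum. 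A clean way to sidestep part of this is to first reduce, using Lemma~\ref{red}, to the case $\tw \in \tW_J$ itself (i.e. $z=1$), which is legitimate because conjugation by the $T_s$ corresponding to $s$ with $zp(s)z\i \notin W_J$ moves $z$ toward $1$; then the statement becomes simply $T_{\tw} \in \ch_J + [\ch,\ch]$ for $\tw \in \tW_J$, which is almost tautological once one knows $T_{\tw_0}^J$ and $T_{\tw_0}$ agree modulo $[\ch,\ch]$ via Theorem~\ref{thm4.1}. I would therefore structure the proof as: (i) reduce to $z$ minimal and $\tw$ non-shortenable, (ii) invoke Lemma~\ref{red} to descend in length, (iii) in the irreducible base case either $\tw \in \tW_J$ directly or apply Theorem~\ref{thm4.1} and check the resulting parabolic sits inside $\ch_J$.
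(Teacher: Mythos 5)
Your proposal follows essentially the same route as the paper: induction on $\ell(\tw)$, using Theorem \ref{min} and Lemma \ref{red} to track the pair $(J,z)$ through equal-length conjugations, the quadratic relation to split $T_{\tw}$ into shorter $(J,z)$-alcove terms in the length-dropping step, and Theorem \ref{thm4.1} in the minimal-length base case together with the verification you correctly single out, namely that the parabolic $J_{\bar\nu_{\tw}}\cap J_{\bar v}$ produced there lies in $J$ (the paper arranges this by choosing $v=\nu_{\tw}+\e z\i(\mu)$ with $\mu$ dominant and $J=J_\mu$). The only caveat is that your proposed ``sidestep'' to $z=1$ is not legitimate --- the equal-length moves of Lemma \ref{red}(1) give no control forcing $z$ toward $1$, and even for $\tw\in\tW_J$ the membership $T_{\tw}\in\ch_J+[\ch,\ch]$ is far from tautological since $\ell$ and $\ell_J$ differ --- but your main argument does not rely on it.
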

\begin{proof}
We argue by induction on the length of $\tw$. Suppose that $\tw$ is of minimal length in its conjugacy class. By \cite[Proposition 2.5 \& Lemma 2.7]{HN2} and Lemma \ref{red}, we may assume further that $\bar C_0$ contains a regular point of $V_{\tw}$.

Let $\mu \in V$ be a dominant vector such that $J=J_\mu$. Since ${\tw}$ is a $(J, z)$-alcove, then $z p({\tw}) z\i(\mu)=\mu$, that is, $z\i(\mu)+ V_{\tw}=V_{\tw}$. Moreover \[\tag{a} R^+-R_J^+ \subset \{\a \in R^+; \<z(\nu_{\tw}), \a^\vee\> \geqslant 0\}. \]

Let $v=\nu_{\tw}+\e z\i(\mu)$ with $\e$ a sufficiently small positive real number. We have $V_{\tw}=V_{\tw}+v$. Let $z_1=uz$ with $u \in W_J$ such that $\<z_1(v), \a^\vee\> \geqslant 0$ for each $\a \in R_J^+$. Let $\b \in R^+ - R_J^+$. By (a), $\<z_1(\nu_{\tw}), \b^\vee\>=\<z(\nu_{\tw}), u\i(\b^\vee)\> \geqslant 0$. Moreover $\<z_1 z\i(\mu), \b^\vee\>=\<\mu, u\i(\b^\vee)\> > 0$. Hence $\<z_1(v), \b^\vee\> > 0$. So $z_1(v)$ is dominant. Since $v$ lies in a sufficiently small neighborhood of $\nu_{\tw}$, $z_1(\nu_{\tw})$ is also dominant. Now applying Proposition \ref{thm4.1} (2), $T_{\tw} \in \ch_{J_{\bar \nu_{\tw}} \cap J_{\bar v}}+[\ch, \ch]$.

Let $\a \in R_{J_{\bar \nu_{\tw}} \cap J_{\bar v}}$. Then $\<z_1(v), \a^\vee\>=\<z_1(\nu_{\tw}), \a^\vee\>=0$. Hence $\<z_1 z\i(\mu), \a^\vee\>=\<\mu,\a^\vee\>=0$. Thus $J_{\bar \nu_{\tw}} \cap J_{\bar v} \subset J$. The statement holds for $\tw$.

Now assume that $\tw$ is not of minimal length in its conjugacy class and the statement holds for all $\tw' \in \tW$ with $\ell(\tw')<\ell(\tw)$.

By Theorem \ref{min}, there exist $\tw_1 \cong \tw$ and $s \in S$ such that $\ell(s \tw_1 s)<\ell(\tw_1)=\ell(\tw)$. Then \[T_{\tw} \equiv T_{\tw_1} \equiv T_{s \tw_1 s}+(q_s^{\frac{1}{2}}-q_s^{-\frac{1}{2}}) T_{s \tw_1} \mod [\ch, \ch].\] Here $\ell(s \tw_1 s), \ell(s \tw_1)<\ell(\tw)$. By Lemma \ref{red}, $\tw_1, s \tw_1 s, s \tw_1$ are $(J, z_1)$-alcove elements for some $z_1 \in {}^J W_0$. The statement follows from induction hypothesis.
\end{proof}

\subsection{} We introduce the class polynomials, following \cite[Theorem 5.3]{HN2}. Suppose that $q_s^{\frac{1}{2}}=q_t^{\frac{1}{2}}$ for all $s, t \in S$. We simply write $v$ for $q_s^{\frac{1}{2}}$. In this case, the parameter function $p_t^{\frac{1}{2}}$ in $\S$\ref{ptj} also equals to $v$.

Let $\tw \in \tW$. Then for any conjugacy class $\co$ of $\tW$, there exists a polynomial $f_{\tw, \co} \in \ZZ[v-v \i]$ with nonnegative coefficient such that $f_{\tw, \co}$ is nonzero only for finitely many $\co$ and \[\tag{a} T_{\tw} \equiv \sum_{\co} f_{\tw, \co} T_{\co} \mod [\tH, \tH].\] 

The polynomials can be constructed explicitly as follows.

If $\tw$ is a minimal element in a conjugacy class of $\tW$, then we set $f_{\tw, \co}=\begin{cases} 1, & \text{ if } \tw \in \co \\ 0, & \text{ if } \tw \notin \co \end{cases}$. Suppose that $\tw$ is not a minimal element in its conjugacy class and that for any $\tw' \in \tW$ with $\ell(\tw')<\ell(\tw)$, $f_{\tw', \co}$ is already defined. By Theorem \ref{min}, there exist $\tw_1 \approx \tw$ and $s \in S$ such that $\ell(s \tw_1 s)<\ell(\tw_1)=\ell(\tw)$. In this case, $\ell(s \tw)<\ell(\tw)$ and we define $f_{\tw, \co}$ as $$f_{\tw, \co}=(v_s-v_s \i) f_{s \tw_1, \co}+f_{s \tw_1 s, \co}.$$

\begin{thm}\label{compare-class}
Let $\tw \in \tW$, $J \subset S_0$ and $z \in {}^J W_0$ such that $\tw$ is a $(J, z)$-alcove. Let \begin{align*}T_{\tw} &\equiv \sum_{\co} f_{\tw, \co}T_{w_\co} \mod [\ch, \ch]; \\ T_{z \tw z\i}^J &\equiv \sum_{\co'} f_{z \tw z\i, \co'}^J T_{w_{\co'}}^J \mod [\ch_J, \ch_J],\end{align*} where $\co$ and $\co'$ run over all the conjugacy classes of $\tW$ and $\tW_J$ respectively in the above summations. Then $$f_{\tw, \co}=\sum_{\co' \subset \co} f_{z \tw z\i, \co'}^J.$$
\end{thm}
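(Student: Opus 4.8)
The plan is to argue by induction on $\ell(\tw)$, mirroring exactly the inductive structure used to prove the previous theorem (that $T_{\tw} \in \ch_J + [\ch, \ch]$), but now keeping track of coefficients rather than mere membership. The key point is that both sides of the asserted identity $f_{\tw, \co} = \sum_{\co' \subset \co} f_{z \tw z\i, \co'}^J$ satisfy the same recursion with the same base case, so they must agree.

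First I would handle the base case: $\tw$ of minimal length in its $\tW$-conjugacy class $\co_0$. Since $\tw$ is a $(J,z)$-alcove, Lemma \ref{red} and \cite[Proposition 2.5 \& Lemma 2.7]{HN2} let me reduce to the situation where $\bar C_0$ contains a regular point of $V_{\tw}$. Then, as in the proof of the preceding theorem, Proposition \ref{thm4.1}(2) applies with $J_{\bar\nu_{\tw}} \cap J_{\bar v} \subset J$, giving $T_{\tw} \equiv T^J_{\tw_0} \mod [\ch,\ch]$ where $\tw_0 = z_1 \tw z_1\i$ (with $z_1 = uz$, $u \in W_J$) is of minimal length in its $\tW_J$-conjugacy class, say $\co_0'$; moreover $\tw_0$ and $z\tw z\i$ are in the same $\tW_J$-conjugacy class $\co_0'$ (conjugate by $u \in W_J$). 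Here I need the compatibility fact: $z\tw z\i$ being minimal-length in $\co_0'$ with $\tw$ minimal in $\co_0$ — this is Proposition \ref{mm}. So the left side is $f_{\tw,\co} = \delta_{\co,\co_0}$ and the right side is $\sum_{\co'\subset\co} \delta_{\co',\co_0'} = \delta_{\co, \co_0}$ since $\co_0' \subset \co_0$. Base case done.

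For the inductive step, suppose $\tw$ is not of minimal length. By Theorem \ref{min} choose $\tw_1 \approx \tw$ and $s \in S$ with $\ell(s\tw_1 s) < \ell(\tw_1) = \ell(\tw)$; then $\ell(s\tw_1) < \ell(\tw)$ as well. On the $\ch$ side, $f_{\tw,\co} = (v - v\i) f_{s\tw_1, \co} + f_{s\tw_1 s, \co}$. By Lemma \ref{red}, $\tw_1$, $s\tw_1 s$, and $s\tw_1$ are all $(J, z_1)$-alcove elements for some $z_1 \in {}^J W_0$ (using the remark that in case (2) one may pass to $z_1 = z p(s)$ when needed, and then pick the minimal coset representative). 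Now apply Corollary \ref{cor4.4}: passing to $z_1$-conjugates sends $s\tw_1 s$ to an element of the $\tW_J$-conjugacy class of $z_1 \tw_1 z_1\i$ that is length-related in the right way. The crucial computation is that conjugation by $z_1$ intertwines the $\tW$-recursion with the $\tW_J$-recursion: one checks that $z_1 (s\tw_1 s) z_1\i = t (z_1 \tw_1 z_1\i) t$ and $z_1(s\tw_1)z_1\i = t(z_1\tw_1 z_1\i)$ for $t = z_1 p(s) z_1\i \in \tW_J$, when $t \in \tW_J$ (which holds in case $\tw_1 > s\tw_1 s$ by Lemma \ref{red}(2)), with $\ell_J(t\, z_1\tw_1 z_1\i\, t) < \ell_J(z_1 \tw_1 z_1\i)$ — so the $\tW_J$-recursion for $f^J_{z\tw z\i, \co'}$ reads $f^J_{z_1\tw_1 z_1\i, \co'} = (v-v\i) f^J_{t z_1 \tw_1 z_1\i, \co'} + f^J_{t z_1\tw_1 z_1\i t, \co'}$ with the \emph{same} parameter $v$ (since we are in the equal-parameter case, $p_t^{1/2} = v$). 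Summing over $\co' \subset \co$ and invoking the induction hypothesis on $s\tw_1$ and $s\tw_1 s$ (both of strictly smaller length, both $(J,z_1)$-alcoves) yields the desired identity for $\tw$. One also uses invariance of $f_{\cdot,\co}$ under $\approx$ to replace $\tw$ by $\tw_1$, and the analogous invariance of $f^J_{\cdot, \co'}$ under $\approx_J$ — the latter requires that $z_1 \tw z_1\i \approx_J z_1 \tw_1 z_1\i$, which follows from Corollary \ref{cor4.4} applied along the chain $\tw \approx \tw_1$.

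The main obstacle I anticipate is the bookkeeping in the inductive step: verifying that conjugation by the correctly-chosen $z_1 \in {}^J W_0$ genuinely transports each elementary step $\tw_1 \to s\tw_1 s$ of the $\tW$-recursion to an elementary step of the $\tW_J$-recursion with matching lengths and parameters — in particular that the reflection $t = z_1 p(s) z_1\i$ really lies in $\tW_J$ and has $\ell_J$-length one whenever it is needed, and that when $t \notin \tW_J$ (the case $z_1 p(s) z_1\i \notin W_J$, handled by Lemma \ref{zs}(2)) the corresponding term on the $\ch_J$ side simply doesn't contribute but the $\ch$ side term $(v-v\i)f_{s\tw_1,\co}$ is absorbed correctly. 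Lemma \ref{zs}, Corollary \ref{cor4.4}, and Lemma \ref{red} are precisely the tools assembled for this, so the argument should go through, but writing it so that every case of "$t$ in or out of $\tW_J$" is cleanly matched is where the care is required.
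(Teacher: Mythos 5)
Your proposal is correct and follows essentially the same route as the paper: induction on $\ell(\tw)$, with the base case reduced to Proposition \ref{mm} (the detour through Theorem \ref{thm4.1} is unnecessary there) and the inductive step transporting the recursion $f_{\tw,\co}=(v-v\i)f_{s\tw_1,\co}+f_{s\tw_1 s,\co}$ to the $\tW_J$-side via Theorem \ref{min}, Lemma \ref{red}, Corollary \ref{cor4.4} and Lemma \ref{zs}. The case split you worry about at the end is resolved exactly as you suspect: at the length-decreasing step Lemma \ref{red}(2) forces $t=zsz\i\in\tW_J$ with $\ell_J(t)=1$, and the case $t\notin\tW_J$ only arises along the length-preserving chain $\tw\approx\tw_1$, where Corollary \ref{cor4.4} keeps $\ell_J$ and the $\tW_J$-conjugacy class unchanged.
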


\begin{proof}
We argue by induction on the length of $\tw$. If $\tw$ is of minimal length in its conjugacy class, then by Proposition \ref{mm}, $z \tw z \i$ is also a minimal length element (with respect to $\ell_J$) in its $\tW_J$-conjugacy class. The statement holds in this case.

Now assume that $\tw$ is not of minimal length in its conjugacy class and the statement holds for all $\tw' \in \tW$ with $\ell(\tw')<\ell(\tw)$.

By Theorem \ref{min}, there exist $\tw_1 \cong \tw$ and $s \in S$ such that $\ell(s \tw_1 s)<\ell(\tw_1)=\ell(\tw)$. By Corollary \ref{cor4.4} and Lemma \ref{red}, there exists $z_1 \in {}^J W_0$ such that $\tw_1, s \tw_1 s, s \tw_1$ are $(J, z_1)$-alcove elements and $z \tw z \i \cong z_1 \tw_1 z_1 \i$ with respect to $\tW_J$.

Let $t=z s z\i$. Then by Lemma \ref{red} and Lemma \ref{zs}, $t \in \tW_J$ and $\ell_J(t)=1$. By the proof of Corollary \ref{cor4.4}, $\ell_J(t z_1 \tw_1 z_1 \i t \i)<\ell_J(z_1 \tw_1 z_1 \i)$. So by the construction of class polynomials,
\begin{gather*}
f_{\tw, \co}=f_{\tw_1, \co}=(v-v \i) f_{s \tw_1, \co}+f_{s \tw_1 s, \co}; \\
f^J_{z \tw z \i, \co'}=f^J_{z_1 \tw_1 z_1 \i, \co'}=(v-v \i) f^J_{t z_1 \tw_1 z_1 \i, \co'}+f^J_{t z_1 \tw_1 z_1 \i t\i, \co'}.
\end{gather*}
The statement follows from induction hypothesis.
\end{proof}

\subsection{} In the rest of this section, we discuss some application to affine Deligne-Lusztig varieties.

Let $\mathbb F_q$ be the finite field with $q$ elements. Let $k$ be an algebraic closure of $\mathbb F_q$. Let $F= \mathbb F_q( (\e))$, the field of Laurent series over $\mathbb F_q$, and $L=k( (\e))$, the field of Laurent series over $k$.

Let $G$ be a quasi-split connected reductive group over $F$ which splits over a tamely ramified extension of $F$.  Let $\s$ be the Frobenius automorphism of $L/F$. We denote the induced automorphism on $G(L)$ also by $\s$.

Let $\ci$ be a $\s$-invariant Iwahori subgroup of $G(L)$. The $\ci$-double cosets in $G(L)$ are parameterized by the extended affine Weyl group $W_G$. The automorphism on $W_G$ induced by $\s$ is denoted by $\d$. Set $\tW=W_G \rtimes \<\d\>$.

For $\tw \in W_G$ and $b \in G(L)$, set
\[
X_{\tw}(b) = \{ g \ci \in G(L)/\ci;\ g^{-1}b\sigma(g) \in \ci \tw \ci \}.
\]
This is the affine Deligne-Lusztig variety attached to $\tw$ and $b$. It plays an important role in arithmetic geometry. We refer to \cite{GHKR}, \cite{GHN} and \cite{H99} for further information.

The relation between the affine Deligne-Lusztig varieties and the class polynomials of the associated affine Hecke algebra is found in \cite[Theorem 6.1]{H99}.

\begin{thm}\label{deg}
Let $b \in G(L)$ and $\tw \in \tW$. Then
\[\dim (X_{\tw}(b))=\max_{\co} \frac{1}{2}(\ell(\tw)+\ell(w_\co)+\deg(f_{\tw \d, \co}))-\<\bar \nu_b, 2\rho\>,\] where $\co$ ranges over the $\tW$-conjugacy class of $W_G \d \subset \tW$ such that $\nu_\co$ equals the Newton point of $b$ and $\k_G(x)=\k_G(b)$ for some (or equivalently, any) $x \in W_G$ with $x \d \in \co$. Here $\k_G$ is the Kottwitz map \cite{Ko}.
\end{thm}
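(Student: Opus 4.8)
The plan is to prove Theorem \ref{deg} by induction on $\ell(\tw)$, showing that the left- and right-hand sides obey the same recursion under the Deligne--Lusztig reduction procedure and agree in the base case. Write $x=\tw\d\in\tW$ and set
\[
d(\tw)=\max_{\co}\tfrac12\bigl(\ell(\tw)+\ell(w_\co)+\deg f_{x,\co}\bigr),
\]
where $\co$ ranges over the $\tW$-conjugacy classes contained in $W_G\d$ with $\nu_\co=\bar\nu_b$ and $\k_G(b)=\k_G(y)$ for some $y\in W_G$ with $y\d\in\co$, and where $\max\emptyset=-\infty$ (likewise $\dim\emptyset=-\infty$). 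The claim is then $\dim X_{\tw}(b)=d(\tw)-\<\bar\nu_b,2\rho\>$. Both quantities are, by construction, controlled by reduction, so it is enough to match the recursions and the base case.

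First I would recall the Deligne--Lusztig reduction for affine Deligne--Lusztig varieties at Iwahori level (cf.\ \cite{GHKR}). If $x=\tw\d$ is not of minimal length in its $\tW$-conjugacy class, then by Theorem \ref{min} there exist $x'=\tw'\d$ with $x'\approx x$ and a simple affine reflection $s$ with $\ell(sx')=\ell(x')-1$ and $\ell(sx's)=\ell(x')-2$; putting $sx'=\tw_2\d$ and $sx's=\tw_3\d$, one has $X_{\tw}(b)\cong X_{\tw'}(b)$, and $X_{\tw'}(b)$ is a disjoint union of two locally closed subvarieties, one a fibration over $X_{\tw_2}(b)$ with fibres $\cong\mathbb A^1$ and the other a fibration over $X_{\tw_3}(b)$ with fibres $\cong\mathbb G_m$, so that $\dim X_{\tw}(b)=1+\max\bigl(\dim X_{\tw_2}(b),\dim X_{\tw_3}(b)\bigr)$. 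On the Hecke-algebra side, $x\approx x'$ gives $T_x\equiv T_{x'}\mod[\ch,\ch]$ (Theorem \ref{min}, $\S$\ref{min-tw}), so $d(\tw)=d(\tw')$; and the recursion $f_{x',\co}=(v-v\i)f_{sx',\co}+f_{sx's,\co}$, together with the nonnegativity of the coefficients of the class polynomials, gives $\deg f_{x',\co}=\max\bigl(1+\deg f_{sx',\co},\;\deg f_{sx's,\co}\bigr)$, whence a short computation using $\ell(sx')=\ell(x')-1$ and $\ell(sx's)=\ell(x')-2$ yields $d(\tw)=1+\max\bigl(d(\tw_2),d(\tw_3)\bigr)$. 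Thus the two recursions coincide (the constant $\<\bar\nu_b,2\rho\>$ being untouched), and since $\ell(\tw_2),\ell(\tw_3)<\ell(\tw)$ and every $\tW$-conjugacy class contains minimal length elements (Theorem \ref{min}), the induction is well founded.

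The remaining, and in my view main, difficulty is the base case: $\tw$ of minimal length in its conjugacy class $\co_0$. Then $f_{x,\co}$ equals $1$ if $\co=\co_0$ and $0$ otherwise, and $\ell(\tw)=\ell(w_{\co_0})$, so $d(\tw)=\ell(\tw)$ if $\co_0$ occurs in the maximum above and $d(\tw)=-\infty$ otherwise. Hence what must be established is: for $\tw$ of minimal length, $X_{\tw}(b)=\emptyset$ unless $\bar\nu_b=\nu_{\co_0}$ and $\k_G(b)=\k_G(\tw)$, and in that case $\dim X_{\tw}(b)=\ell(\tw)-\<\bar\nu_b,2\rho\>$. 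This is the explicit determination of affine Deligne--Lusztig varieties for minimal length (in particular, $\sigma$-straight) elements: such a variety is, up to a finite morphism and a product with a torus, an iterated fibration whose dimension can be read off, while its nonemptiness is governed by Kottwitz's classification of $\sigma$-conjugacy classes. I would invoke this input; it is carried out for split groups in \cite{GHKR} and extends to the quasi-split tamely ramified case by the same method. Combined with the recursion of the preceding paragraph and the constancy of $\<\bar\nu_b,2\rho\>$ along the reduction, it gives the formula by induction on $\ell(\tw)$.
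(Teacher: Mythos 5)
The paper does not actually prove Theorem \ref{deg}; it is quoted from \cite[Theorem 6.1]{H99} (``The relation \dots is found in [H99, Theorem 6.1]''), so there is no in-paper argument to compare yours against. Your reduction step is correct and is exactly the strategy of that reference: Deligne--Lusztig reduction on the geometric side, the defining recursion $f_{x',\co}=(v-v\i)f_{sx',\co}+f_{sx's,\co}$ on the algebraic side, the observation that nonnegativity of the coefficients in $\ZZ[v-v\i]$ forbids cancellation so that $\deg f_{x',\co}=\max\bigl(1+\deg f_{sx',\co},\ \deg f_{sx's,\co}\bigr)$, and the bookkeeping with $\ell(sx')=\ell(x')-1$, $\ell(sx's)=\ell(x')-2$ (for dimension purposes it is immaterial which of the two pieces carries the $\mathbb{A}^1$- and which the $\mathbb{G}_m$-fibration). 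The set of admissible $\co$ depends only on $b$, so it is indeed constant along the induction.

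The gap is the base case, which you invoke rather than prove, and which is the actual content of the theorem. What is needed is: (i) for $\tw\d$ of minimal length in its conjugacy class $\co_0$, all representatives $\dot{\tw}\s$ lie in a single $\s$-conjugacy class $[b_{\co_0}]$ of $G(L)$, determined by $(\nu_{\co_0},\k_G(\co_0))$, so that $X_{\tw}(b)=\emptyset$ unless $b\in[b_{\co_0}]$; and (ii) $\dim X_{\tw}(b_{\co_0})=\ell(\tw)-\<\nu_{\co_0},2\rho\>$. Your attribution of this input to \cite{GHKR} is not accurate: that paper does not establish the nonemptiness and dimension of $X_{\tw}(b)$ for arbitrary minimal length $\tw$ even in the split case; it treats basic classes, translations, and the $P$-alcove reductions. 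Statements (i) and (ii) are precisely the main theorems of \cite{H99}, proved there by reducing minimal length elements to $\s$-straight elements via the partial-conjugation normal form $t^{\bar\l}w_2x_2$ (as in \S\ref{def-tw1} here) together with a Hodge--Newton-type comparison --- that is, by machinery of the same order of difficulty as the theorem itself. So as a self-contained argument your proposal assumes the hard part; as a reduction of Theorem \ref{deg} to the minimal-length case it is correct and coincides with how \cite{H99} proceeds.
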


\subsection{} For $J \subset S_0$, let $M_J$ be the corresponding Levi subgroup of $G$ defined in \cite[3.2]{GHN} and $\k_J$ the Kottwitz map for $M_J(L)$. As a consequence of Theorem \ref{compare-class}, we have

\begin{thm}
Let $\tw \in W_G$ and $z \in W_0$. Suppose $\tw \d$ is a $(J, z)$-alcove element. Then for any $b \in M_J(L)$, $X_{\tw}(b)=\emptyset$ unless $\k_J(z \tw \d(z)\i)=\k_J(b)$.
\end{thm}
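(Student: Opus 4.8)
The plan is to reduce the emptiness of $X_{\tw}(b)$, via the ``dimension $=$ degree'' Theorem~\ref{deg}, to the vanishing of class polynomials, and then to transport that information into $\ch_J$ by means of the comparison Theorem~\ref{compare-class}. So suppose $X_{\tw}(b)\neq\emptyset$. By Theorem~\ref{deg} (read with the convention $\deg 0=-\infty$, so that a degenerate maximum corresponds to the empty variety), there is a $\tW$-conjugacy class $\co\subset W_G\d$ with $\nu_\co=\bar\nu_b$, with $\k_G(x)=\k_G(b)$ for $x\d\in\co$, and with $f_{\tw\d,\co}\neq 0$. Since $\tw\d$ is a $(J,z)$-alcove element with $z\in{}^JW_0$, condition (1) of that notion gives $z(\tw\d)z\i=\bigl(z\tw\d(z)\i\bigr)\d\in\tW_J$, so Theorem~\ref{compare-class} applies with $\tw\d$ in the role of $\tw$ and yields
\[ 0\neq f_{\tw\d,\co}=\sum_{\co'\subset\co}f^{J}_{z(\tw\d)z\i,\co'}, \]
the sum running over $\tW_J$-conjugacy classes $\co'\subset\co$. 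Hence some such $\co'$ has $f^{J}_{z(\tw\d)z\i,\co'}\neq 0$.

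Next I would use that the $M_J$-Kottwitz point is constant on the support of any class polynomial of $\ch_J$: the cocenter $\bar\ch_J$ is graded by $\Om_J\cong\pi_1(M_J)$ with $T^{J}_{\co'}$ homogeneous of degree $\k_J(\co')$, and the recursion defining the class polynomials of $\ch_J$ only ever multiplies by simple reflections of $\tW_J$, all of which lie in the affine Weyl subgroup and so have trivial image in $\Om_J$. Hence every contributing $\co'$ satisfies $\k_J(\co')=\k_J(z\tw\d(z)\i)$, and it suffices to show that one such $\co'$ is \emph{relevant for $b$ viewed in $M_J(L)$}, i.e.\ that its ($M_J$-dominant) Newton point equals $\bar\nu_b$ --- which is legitimate, since $\bar\nu_b$ is $G$-dominant hence $M_J$-dominant --- and that $\k_J(\co')=\k_J(b)$; granting this, the previous identity forces $\k_J(z\tw\d(z)\i)=\k_J(\co')=\k_J(b)$, which is the assertion.

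From $\co'\subset\co$ we already know that the Newton point of $\co'$ is a $W_0$-conjugate of $\bar\nu_b$ and that $\k_J(\co')$ maps to $\k_G(b)$ under $\pi_1(M_J)\to\pi_1(G)$. The alcove condition (2), that $(\tw\d)\,C_0\ge_\a C_0$ for $\a\in z\i(R^+-R_J^+)$, readily yields $\<z(\nu_{\tw\d}),\a^\vee\>\ge 0$ for all $\a\in R^+-R_J^+$; since $W_J$ permutes $R^+-R_J^+$, the $M_J$-Newton point of $z(\tw\d)z\i$ is then dominant for all of $R^+$, so it coincides with its $G$-Newton point $\bar\nu_{\tw\d}$. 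As the Kottwitz point refines the Newton point rationally, in $\pi_1(M_J)_{\QQ}$ the classes of $\k_J(z\tw\d(z)\i)$ and of $\bar\nu_{\tw\d}$ agree, while that of $\k_J(\co')$ agrees with the Newton point of $\co'$; using $\k_J(\co')=\k_J(z\tw\d(z)\i)$ together with the fact that the Newton point of $\co'$ is a $W_0$-conjugate of $\bar\nu_b$, this pins it down to the $G$-dominant representative $\bar\nu_b$. Now $\k_J(\co')$ and $\k_J(b)$ agree both in $\pi_1(G)$ and rationally in $\pi_1(M_J)$, and the classification of $\s$-conjugacy classes of $M_J(L)$ by the pair (Newton point, Kottwitz point) forces $\k_J(\co')=\k_J(b)$.

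The hard part is precisely this last identification: forcing the contributing class $\co'$ to carry the Kottwitz point $\k_J(b)$ itself, and not merely some class with the same image in $\pi_1(G)$. It is here that the alcove inequality (2) is indispensable --- without it a class $\co'\subset\co$ with a non-$G$-dominant Newton point could a priori contribute to $f_{\tw\d,\co}$ --- and here that one must keep careful track of the $\d$-twisted normalizations of the Kottwitz maps, working in the appropriate $\s$-coinvariants throughout, and of the uniqueness of dominant representatives modulo the relevant coroot sublattices. All the remaining ingredients are routine: Theorems~\ref{deg} and~\ref{compare-class}, the $\Om$-grading of the cocenter, and the standard dictionary between affine Deligne--Lusztig varieties and $\s$-conjugacy classes.
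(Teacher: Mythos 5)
Your reduction is exactly the paper's: from $X_{\tw}(b)\neq\emptyset$, Theorem \ref{deg} produces a class $\co$ with $f_{\tw\d,\co}\neq 0$, $\nu_\co=\bar\nu_b$ and $\k_G(\co)=\k_G(b)$, and Theorem \ref{compare-class} produces a $\tW_J$-class $\co'\subset\co$ with $f^J_{z\tw\d (z)\i,\co'}\neq 0$; your observation that $\k_J$ is constant on the support of the class polynomials of $\ch_J$ (so $\k_J(\co')=\k_J(z\tw\d(z)\i)$) is also correct and is implicitly what the paper uses when it passes to the nonempty variety $X^{M_J}_{z\tw\d(z)\i}(b')$. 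The divergence is in the last step: the paper defines $b'\in M_J(L)$ by the invariants $(\nu_{\co'},\k_J(\co'))$ and invokes \cite[Proposition 3.5.1]{GHN} to identify $[b']$ with $[b]$ inside $M_J(L)$, whereas you try to prove $\k_J(\co')=\k_J(b)$ directly. That direct argument does not close.

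Concretely: (i) you pass from $\k_J(\co')=\k_J(z\tw\d(z)\i)$ to $\nu_{\co'}\equiv\bar\nu_{\tw\d}\pmod{\QQ R_J}$ and claim that, together with $\nu_{\co'}\in W_0\cdot\bar\nu_b$, this ``pins it down'' to $\bar\nu_b$. But $\bar\nu_{\tw\d}$ is the Newton point of the \emph{element} $\tw\d$, while $\bar\nu_b=\nu_\co$ is the Newton point of a class in the support of its class polynomial; these coincide only when $\tw\d$ is of minimal length in $\co$ (take $b$ basic and $\tw$ long to see they are unrelated), so your congruence carries no information about $\bar\nu_b$. (ii) The uniqueness you need --- that a $W_0$-orbit meets each coset of $\QQ R_J$ in at most one $J$-dominant point --- is false: in $GL_4$ with $J=\{s_1,s_3\}$, both $(4,1,3,2)$ and $(3,2,4,1)$ are $J$-dominant, lie in one $W_0$-orbit, and differ by $(1,-1,-1,1)\in\QQ R_J$. (iii) Even granting $\nu_{\co'}=\bar\nu_b$, to get $\k_J(\co')=\k_J(b)$ rationally you would need $\bar\nu_b$ to equal the $M_J$-dominant Newton point of $b$, which fails for general $b\in M_J(L)$. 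The structural point is that everything you extract from Theorems \ref{deg} and \ref{compare-class} depends on $b$ only through its $G(L)$-$\s$-conjugacy class, while $\k_J(b)$ genuinely depends on the choice of $b$ within that class; so some additional group-theoretic input relating $[b]_{M_J}$ to $[b]_G$ is unavoidable. That input is precisely \cite[Proposition 3.5.1]{GHN} in the paper's proof, and your closing paragraph, which states that ``the hard part is precisely this last identification'' and then lists what ``one must'' do, leaves it unsupplied.
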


\begin{rmk}
This result was first proved in \cite[Theorem 1.1.2]{GHKR} for split groups and then generalized to tamely ramified groups in \cite[Corollary 3.6.1]{GHN}. The approach there is geometric, using Moy-Prasad filtration. The approach here is more algebraic.
\end{rmk}

\begin{proof}
Assume $X_{\tw}(b) \neq \emptyset$. By Theorem \ref{deg}, there exists a conjugacy class $\co$ of $W_G \d$ such that $f_{\tw \d, \co} \neq 0$, $\nu_\co=\bar \nu_b$ and $\k_G(b)=\k_G(x)$ for some (or equivalently, any) $x \in W_G$ with $x \d \in \co$. By Theorem \ref{compare-class}, there exists a $\tW_J$-conjugacy class $\co' \subset \co$ such that $f_{z \tw \d z\i, \co'}^J \neq 0$. Choose $b' \in M_J(L)$ such that $\nu_{b'}=\nu_{\co'}$ and $\k_J(b')=\k_J(x')$ for some (or equivalently, any) $x' \in W_{M_J}$ with $x'\d \in \co'$. By \cite[Proposition 3.5.1]{GHN},
$b$ and $b'$ belong to the same $\s$-conjugacy class of $M_J(L)$. Since the affine Deligne-Lusztig variety $X_{z \tw \d(z)\i}^{M_J}(b')$ for $M_J$ is nonempty, we have $\k_J(z \tw \d(z)\i)=\k_J(b')=\k_J(b)$.
\end{proof}


\begin{thebibliography}{00}
\bibitem{BDK}
J.~Bernstein, P.~Deligne, D.~Kazhdan,
\emph{Trace Paley-Wiener theorem for reductive $p$-adic groups},
J. d'Analyse Math. \textbf{47} (1986), 180--192.

\bibitem{CH}
D.~ Ciubotaru and X.~ He, \emph{The cocenter of graded affine Hecke algebra and the density theorem}, arXiv:1208.0914.

\bibitem{GHKR}
U.~G\"{o}rtz, T.~Haines, R.~Kottwitz, D.~Reuman, \emph{Affine Deligne-Lusztig varieties in affine flag varieties}, Compos. Math. \textbf{146} (2010), no.~5, 1339--1382.

\bibitem{GHN}
U.~ G\"ortz, X.~ He and S.~Nie, \emph{$P$-alcoves and nonemptiness of affine Deligne-Lusztig
varieties}, arXiv:1211.3784.

\bibitem{H1}
X.~ He,  \emph{Minimal length elements in some double cosets of {C}oxeter groups}, Adv. Math. \textbf{215} (2007), no.~2, 469--503.


\bibitem{H99}
X.~ He,  \emph{Geometric and homological properties of affine Deligne-Lusztig varieties}, arXiv:1201.4901, to appear in Ann. of Math.

\bibitem{HN1}
X.~He and S.~Nie, \emph{Minimal length elments of finite Coxeter groups}, Duke Math. J., 161 (2012), 2945--2967.

\bibitem{HN2}
X.~He and S.~Nie, \emph{Minimal length elements of extended affine Weyl groups, II}, arXiv:1112.0824.

\bibitem{Ka}
D.~Kazhdan, \emph{Representations groups over close local fields},
J. d'Analyse Math. \textbf{47} (1986), 175--179.

\bibitem{IM65}
N.~Iwahori and H.~Matsumoto, \emph{On some {B}ruhat decomposition and the structure of the {H}ecke rings of p-adic {C}hevalley groups}, Inst. Hautes \'Etudes Sci. Publ. Math. (1965), no.~25, 5--48.

\bibitem{Ko} R.~Kottwitz, \emph{Isocrystals with
additional structure. {II}}, Compositio Math. \textbf{109} (1997), 255--339.

\bibitem{Lu}
G.~Lusztig, \emph{Affine Hecke algebras and their graded version}, J. Amer. Math. Soc. \textbf{2} (1989), 599--635.

\bibitem{L99}
G.~Lusztig, \emph{Hecke algebras with unequal parameters},
CRM Monograph Series, 18. American Mathematical Society, Providence, RI, 2003.

\bibitem{L4}
G.~Lusztig, \emph{Parabolic character sheaves. I}, Mosc. Math. J. \textbf{4} (2004), no. 1, 153--179.

\bibitem{OS}
E.~Opdam and M.~Solleveld, \emph{Homological algebra for affine Hecke algebras}, Adv. Math. \textbf{220} (2009), no. 5, 1549--1601.

\bibitem{OS2}
E.~Opdam and M.~Solleveld, \emph{Discrete series characters for affine Hecke algebras and their formal degrees}, Acta Math. \textbf{205}(2010), 105--187.

\end{thebibliography}
\end{document}